\providecommand{\algorithmname}{Algorithm}
\numberwithin{equation}{section}
\numberwithin{figure}{section}
\theoremstyle{plain}
\newtheorem{thm}{\protect\theoremname}[section]
  \theoremstyle{definition}
  \newtheorem{defn}[thm]{\protect\definitionname}
  \theoremstyle{definition}
  \newtheorem{problem}[thm]{\protect\problemname}
  \theoremstyle{definition}
  \newtheorem{example}[thm]{\protect\examplename}
  \theoremstyle{remark}
  \newtheorem{rem}[thm]{\protect\remarkname}
  \theoremstyle{plain}
  \newtheorem{prop}[thm]{\protect\propositionname}
  \theoremstyle{plain}
  \newtheorem{cor}[thm]{\protect\corollaryname}
  \theoremstyle{plain}
  \newtheorem{lem}[thm]{\protect\lemmaname}
\newcommand{\isdef}{\ensuremath{\stackrel{\text{def}}{=}}}
\newcommand{\RR}{\ensuremath{\mathbb{R}}}
\newcommand{\CC}{\ensuremath{\mathbb{C}}}
\DeclareMathOperator{\dd}{d}
\DeclareMathOperator{\diag}{diag}
\DeclareMathOperator{\spann}{span}
\DeclareMathOperator{\rank}{rank}
\DeclareRobustCommand*\cal{\@fontswitch\relax\mathcal}
  \providecommand{\corollaryname}{Corollary}
  \providecommand{\definitionname}{Definition}
  \providecommand{\examplename}{Example}
  \providecommand{\lemmaname}{Lemma}
  \providecommand{\problemname}{Problem}
  \providecommand{\propositionname}{Proposition}
  \providecommand{\remarkname}{Remark}
\providecommand{\theoremname}{Theorem}
\begin{document}

\title{Geometry and Singularities of the Prony mapping}

\author{Dmitry Batenkov}

\address{Department of Mathematics\\
Weizmann Institute of Science\\
Rehovot 76100\\
Israel}

\email{dima.batenkov@weizmann.ac.il}

\urladdr{http://www.wisdom.weizmann.ac.il/~dmitryb}

\author{Yosef Yomdin}

\email{yosef.yomdin@weizmann.ac.il}

\urladdr{http://www.wisdom.weizmann.ac.il/~yomdin}

\keywords{{\small Singularities, Signal acquisition, Non-linear models, Moments
inversion.}}

\subjclass[2000]{{\small 94A12 62J02, 14P10, 42C99}}

\thanks{This research is supported by the Adams Fellowship Program of the
Israel Academy of Sciences and Humanities, ISF grant 264/09 and the
Minerva Foundation.}
\begin{abstract}
Prony mapping provides the global solution of the Prony system of
equations 
\[
\Sigma_{i=1}^{n}A_{i}x_{i}^{k}=m_{k},\ k=0,1,\dots,2n-1.
\]
This system appears in numerous theoretical and applied problems arising
in Signal Reconstruction. The simplest example is the problem of reconstruction
of linear combination of $\delta$-functions of the form $g(x)=\sum_{i=1}^{n}a_{i}\delta(x-x_{i})$,
with the unknown parameters $a_{i},\ x_{i},\ i=1,\dots,n,$ from the
``moment measurements'' $m_{k}=\int x^{k}g(x)dx.$

Global solution of the Prony system, i.e. inversion of the Prony mapping,
encounters several types of singularities. One of the most important
ones is a collision of some of the points $x_{i}.$ The investigation
of this type of singularities has been started in \cite{yom2009Singularities}
where the role of finite differences was demonstrated.

In the present paper we study this and other types of singularities
of the Prony mapping, and describe its global geometry. We show, in
particular, close connections of the Prony mapping with the ``Vieta
mapping'' expressing the coefficients of a polynomial through its
roots, and with hyperbolic polynomials and ``Vandermonde mapping''
studied by V. Arnold.
\end{abstract}
\maketitle
\global\long\def\g{\gamma}
\global\long\def\M{\tilde{M}}
\global\long\def\PM{{\cal PM}}
\global\long\def\err{\varepsilon}
\newcommandx\confvec[2][usedefault, addprefix=\global, 1=j, 2=k]{\vec{v_{#1,#2}}}
 \global\long\def\cvand{\ensuremath{V}}
 \global\long\def\o{\omega}
\global\long\def\SM{{\cal SM}}
\global\long\def\TM{{\cal TM}}
\global\long\def\m{\mu}
\global\long\def\e{\epsilon}

\global\long\def\vec#1{\ensuremath{\mathbf{#1}}}

\section{Introduction}

Prony system appears as we try to solve a very simple ``algebraic
signal reconstruction'' problem of the following form: assume that
the signal $F(x)$ is known to be a linear combination of shifted
$\delta$-functions:
\begin{equation}
F\left(x\right)=\sum_{j=1}^{d}a_{j}\delta\left(x-x_{j}\right).\label{eq:equation_model_delta}
\end{equation}
We shall use as measurements the polynomial moments:
\begin{equation}
m_{k}=m_{k}\left(F\right)=\int x^{k}F\left(x\right)\dd x.\label{eq:moments}
\end{equation}
After substituting $F$ into the integral defining $m_{k}$ we get
\[
m_{k}(F)=\int x^{k}\sum_{j=1}^{d}a_{j}\delta(x-x_{j})\dd x=\sum_{j=1}^{d}a_{j}x_{j}^{k}.
\]
Considering $a_{j}$ and $x_{j}$ as unknowns, we obtain equations
\begin{equation}
m_{k}\left(F\right)=\sum_{j=1}^{d}a_{j}x_{j}^{k},\; k=0,1,\dots.\label{eq:equation_prony_system}
\end{equation}
This infinite set of equations (or its part, for $k=0,1,\dots,2d-1$),
is called Prony system. It can be traced at least to R. de Prony (1795,
\cite{prony1795essai}) and it is used in a wide variety of theoretical
and applied fields. See \cite{Auton1981} for an extensive bibligoraphy
on the Prony method.

In writing Prony system \eqref{eq:equation_prony_system} we have
assumed that all the nodes $x_{1},\dots,x_{d}$ are pairwise different.
However, as a right-hand side $\mu=(m_{0},\dots,m_{2d-1})$ of \eqref{eq:equation_prony_system}
is provided by the actual measurements of the signal $F$, we cannot
guarantee a priori, that this condition is satisfied for the solution.
Moreover, we shall see below that multiple nodes may naturally appear
in the solution process. In order to incorporate possible collisions
of the nodes, we consider ``confluent Prony systems''.

Assume that the signal $F(x)$ is a linear combination of shifted
$\delta$-functions and their derivatives:
\begin{equation}
F\left(x\right)=\sum_{j=1}^{s}\sum_{\ell=0}^{d_{j}-1}a_{j,\ell}\delta^{\left(\ell\right)}\left(x-x_{j}\right).\label{eq:confluent equation_model_delta}
\end{equation}

\begin{defn}
\label{def:mult-vec-signal}For $F\left(x\right)$ as above, the vector
$D\left(F\right)\isdef(d_{1},\dots,d_{s})$ is \emph{the multiplicity
vector} of $F$, $s=s\left(F\right)$ is the size of its support,
$T\left(F\right)\isdef\left(x_{1},\dots,x_{s}\right)$, and $\rank\left(F\right)\isdef\sum_{j=1}^{s}d_{j}$
is its rank. For avoiding ambiguity in these definitions, it is always
understood that $a_{j,d_{j}-1}\neq0$ for all $j=1,\dots,s$ (i.e.
$d_{j}$ is the maximal index for which $a_{j,d_{j}-1}\neq0$).
\end{defn}
For the moments $m_{k}=m_{k}(F)=\int x^{k}F(x)\dd x$ we now get 
\[
m_{k}=\sum_{j=1}^{s}\sum_{\ell=0}^{d_{j}-1}a_{j,\ell}\frac{{k!}}{{(k-\ell)!}}x_{j}^{k-\ell}.
\]
Considering $x_{i}$ and $a_{j,\ell}$ as unknowns, we obtain a system
of equations
\begin{equation}
\sum_{j=1}^{s}\sum_{\ell=0}^{d_{j}-1}\frac{k!}{\left(k-\ell\right)!}a_{j,\ell}x_{j}^{k-\ell}=m_{k},\quad k=0,1,\dots,2d-1,\label{eq:confluent equation_prony_system}
\end{equation}
which is called a confluent Prony system of order $d$ with the multiplicity
vector $D=\left(d_{1},\dots,d_{s}\right)$. The original Prony system
\eqref{eq:equation_prony_system} is a special case of the confluent
one, with $D$ being the vector $(1,\dots,1)$ of length $d$.

The system \eqref{eq:confluent equation_prony_system} arises also
in the problem of reconstructing a planar polygon $P$ (or even an
arbitrary semi-analytic \emph{quadrature domain}) from its moments
\[
m_{k}(\chi_{P})=\iint_{\RR^{2}}z^{k}\chi_{P}\dd x\dd y,\; z=x+\imath y,
\]
where $\chi_{P}$ is the characteristic function of the domain $P\subset\RR^{2}$.
This problem is important in many areas of science and engineering
\cite{gustafsson2000rpd}. The above yields the confluent Prony system
\[
m_{k}=\sum_{j=1}^{s}\sum_{i=0}^{d_{j}-1}c_{i,j}k(k-1)\cdots(k-i+1)z_{j}^{k-i},\qquad c_{i,j}\in\CC,\; z_{j}\in\CC\setminus\left\{ 0\right\} .
\]

\begin{defn}
For a given multiplicity vector $D=\left(d_{1},\dots,d_{s}\right)$,
its \emph{order} is $\sum_{j=1}^{s}d_{j}$.
\end{defn}
As we shall see below, if we start with the measurements $\mu(F)=\mu=(m_{0},\dots,m_{2d-1})$,
then a natural setting of the problem of solving the Prony system
is the following:
\begin{problem}[Prony problem of order $d$]
\label{prob:prony}\textit{Given the measurements 
\[
\mu=(m_{0},\dots,m_{2d-1})\in\CC^{2d}
\]
in the right hand side of \eqref{eq:confluent equation_prony_system},
find the multiplicity vector $D=(d_{1},\dots,d_{s})$ of order $r=\sum_{j=1}^{s}d_{j}\leq d$,
and find the unknowns $x_{j}$ and $a_{j,\ell},$ which solve the
corresponding confluent Prony system \eqref{eq:confluent equation_prony_system}
with the multiplicity vector $D$ (hence, with solution of rank $r$).}
\end{problem}
It is extremely important in practice to have \emph{a stable method
of inversion}. Many research efforts are devoted to this task (see
e.g. \cite{badeau2008performance,batenkov2011accuracy,donoho2006stable,peter2011nonlinear,potts2010parameter,stoica1989music}
and references therein). A basic question here is the following.
\begin{problem}[Noisy Prony problem]
\label{prob:noisy-prony}Given the \emph{noisy} measurements \textit{
\[
\tilde{\mu}=(\tilde{m_{0}},\dots,\tilde{m}_{2d-1})\in\CC^{2d}
\]
and an estimate of the error $\left|\tilde{m}_{k}-m_{k}\right|\leq\err_{k}$,
solve \prettyref{prob:prony} so as to minimize the reconstruction
error.}
\end{problem}
In this paper we study the global setting of the Prony problem, stressing
its algebraic structure. In \prettyref{sec:mappings-def} the space
where the solution is to be found (Prony space) is described. It turns
out to be a vector bundle over the space of the nodes $x_{1},\dots,x_{d}$.
We define also three mappings: ``Prony'', ``Taylor'', and ``Stieltjes''
ones, which capture the essential features of the Prony problem and
of its solution process.

In \prettyref{sec:solving-prony} we investigate solvability conditions
for the Prony problem. The answer leads naturally to a stratification
of the space of the right-hand sides, according to the rank of the
associated Hankel-type matrix and its minors. The behavior of the
solutions near various strata turns out to be highly nontrivial, and
we present some initial results in the description of the corresponding
singularities.

In \prettyref{sec:multiplicity-restricted}, we study the multiplicity-restricted
Prony problem, fixing the collision pattern of the solution, and derive
simple bounds for the stability of the solution via factorization
of the Jacobian determinant of the corresponding Prony map.

In \prettyref{sec:rank-restriction} we consider the rank-restricted
Prony problem, effectively reducing the dimension to $2r$ instead
of $2d$, where $r$ is precisely the rank of the associated Hankel-type
matrix. In this formulation, the Prony problem is solvable in a small
neighborhood of the exact measurement vector.

In \prettyref{sec:collision-singularities} we study one of the most
important singularities in the Prony problem: collision of some of
the points $x_{i}.$ The investigation of this type of singularities
has been started in \cite{yom2009Singularities} where the role of
finite differences was demonstrated. In the present paper we introduce
global bases of finite differences, study their properties, and prove
that using such bases we can resolve in a robust way at least the
linear part of the Prony problem at and near colliding configurations
of the nodes.

In \prettyref{sec:real-prony} we discuss close connections of the
Prony problem with hyperbolic polynomials and ``Vandermonde mapping''
studied by V.I.Arnold in \cite{arnold1986hpa} and by V.P.Kostov in
\cite{kostov1989geometric,kostov2006root,kostov2007root}, and with
``Vieta mapping'' expressing the coefficients of a polynomial through
its roots. We believe that questions arising in theoretical study
of Prony problem and in its practical applications justify further
investigation of these connections, as well as further applications
of Singularity Theory.

Finally, in \prettyref{app:proof-pade-solvability} we describe a
solution method for the Prony system based on Padé approximation.

\section{\label{sec:mappings-def}Prony, Stieltjes and Taylor Mappings}

In this section we define ``Prony'', ``Taylor'', and ``Stieltjes''
mappings, which capture some essential features of the Prony problem
and of its solution process. The main idea behind the spaces and mappings
introduced in this section is the following: associate to the signal
$F(x)=\sum_{i=1}^{d}a_{i}\delta(x-x_{i})$ the rational function $R(z)=\sum_{i=1}^{d}\frac{{a_{i}}}{{z-x_{i}}}$.
(In fact, $R$ is the Stieltjes integral transform of $F$). The functions
$R$ obtained in this way can be written as $R(z)=\frac{{P(z)}}{{Q(z)}}$
with $\deg P\leq\deg Q-1,$ and they satisfy $R(\infty)=0$. Write
$R$ as $R(z)=\sum_{i=1}^{d}\frac{{za_{i}}}{{1-x_{i}/z}}.$ Developing
the summands into geometric progressions we conclude that $R(z)=\sum_{k=0}^{\infty}m_{k}(\frac{1}{z})^{k+1},$
with $m_{k}=\sum_{i=1}^{d}a_{i}x_{i}^{k}$, so the moment measurements
$m_{k}$ in the right hand side of the Prony system (\ref{eq:equation_prony_system})
are exactly the Taylor coefficients of $R(z)$. We shall see below
that this correspondence reduces solution of the Prony system to an
appropriate Padé approximation problem.
\begin{defn}
\label{def:config}For each $w=\left(x_{1},\dots,x_{d}\right)\in\CC^{d}$,
let $s=s\left(w\right)$ be the number of distinct coordinates $\tau_{j}$,
$j=1,\dots,s$, and denote $T\left(w\right)=\left(\tau_{1},\dots,\tau_{s}\right)$.
The multiplicity vector is $D=D\left(w\right)=\left(d_{1},\dots,d_{s}\right)$,
where $d_{j}$ is the number of times the value $\tau_{j}$ appears
in $\left\{ x_{1},\dots,x_{d}\right\} .$ The order of the values
in $T\left(w\right)$ is defined by their order of appearance in $w$.\end{defn}
\begin{example}
For $w=\left(3,1,2,1,0,3,2\right)$ we have $s\left(w\right)=4$,
$T\left(w\right)=\left(3,1,2,0\right)$ and $D\left(w\right)=\left(2,2,2,1\right)$.\end{example}
\begin{rem}
\label{rem:mult-vec-abuse}Note the slight abuse of notations between
\prettyref{def:mult-vec-signal} and \prettyref{def:config}. Note
also that the \emph{order }of \emph{$D\left(w\right)$ }equals to\emph{
$d$ }\textbf{\emph{for all }}$w\in\CC^{d}$\emph{.}\end{rem}
\begin{defn}
\label{def:bundles}For each $w\in\CC^{d}$, let $s=s\left(w\right),\; T\left(w\right)=\left(\tau_{1},\dots,\tau_{s}\right)$
and $D\left(w\right)=\left(d_{1},\dots,d_{s}\right)$ be as in \prettyref{def:config}.
\begin{enumerate}
\item $V_{w}$ is the vector space of dimension $d$ containing the linear
combinations
\begin{equation}
g=\sum_{j=1}^{s}\sum_{\ell=0}^{d_{j}-1}\g_{j,\ell}\delta^{\left(\ell\right)}\left(x-\tau_{j}\right)\label{eq:standard-basis-representation}
\end{equation}
of $\delta$-functions and their derivatives at the points of $T\left(w\right)$.
The ``standard basis'' of $V_{w}$ is given by the distributions
\begin{equation}
\delta_{j,\ell}=\delta^{\left(\ell\right)}\left(x-\tau_{j}\right),\qquad j=1,\dots,s\left(w\right);\;\ell=0,\dots,d_{j}-1.\label{eq:standard-basis-prony}
\end{equation}

\item $W_{w}$ is the vector space of dimension $d$ of all the rational
functions with poles $T\left(w\right)$ and multiplicities $D\left(w\right)$,
vanishing at $\infty:$
\[
R\left(z\right)=\frac{P\left(z\right)}{Q\left(z\right)},\qquad Q\left(z\right)=\prod_{j=1}^{s}\left(z-\tau_{j}\right)^{d_{j}},\;\deg P\left(z\right)<\deg Q\leqslant d.
\]
The ``standard basis'' of $W_{w}$ is given by the elementary fractions
\[
R_{j,\ell}=\frac{1}{\left(z-\tau_{j}\right)^{\ell}},\qquad j=1,\dots,s;\;\ell=1,\dots,d_{j}.
\]

\end{enumerate}
\end{defn}
\begin{minipage}[t]{1\columnwidth}%
\end{minipage}

Now we are ready to formally define the Prony space ${\cal P}_{d}$
and the Stieltjes space ${\cal S}_{d}$.
\begin{defn}
The Prony space ${\cal P}_{d}$ is the vector bundle over $\CC^{d}$,
consisting of all the pairs
\[
\left(w,g\right):\quad w\in\CC^{d},\; g\in V_{w}.
\]
The topology on ${\cal P}_{d}$ is induced by the natural embedding
${\cal P}_{d}\subset\CC^{d}\times{\cal D},$ where ${\cal D}$ is
the space of distributions on $\CC$ with its standard topology.
\end{defn}
\begin{minipage}[t]{1\columnwidth}%
\end{minipage}
\begin{defn}
The Stieltjes space ${\cal S}_{d}$ is the vector bundle over $\CC^{d}$,
consisting of all the pairs
\[
\left(w,\g\right):\qquad w\in\CC^{d},\;\g\in W_{w}.
\]
The topology on ${\cal S}_{d}$ is induced by the natural embedding
${\cal S}_{d}\subset\CC^{d}\times{\cal R}$, where ${\cal R}$ is
the space of complex rational functions with its standard topology.
\end{defn}
\begin{minipage}[t]{1\columnwidth}%
\end{minipage}
\begin{defn}
\label{def:stieltjes-mapping}The Stieltjes mapping $\SM:{\cal P}_{d}\rightarrow{\cal S}_{d}$
is defined by the Stieltjes integral transform: for $(w,g)\in{\cal P}_{d}$
\[
\SM\left(\left(w,g\right)\right)=\left(w,\g\right),\qquad\g\left(z\right)=\int_{-\infty}^{\infty}\frac{g\left(x\right)\dd x}{z-x}.
\]
Sometimes we abuse notation and write for short $\SM\left(g\right)=\g$,
with the understanding that $\SM$ is also a map $\SM:V_{w}\to W_{w}$
for each $w\in\CC^{d}$.
\end{defn}
The following fact is immediate consequence of the above definitions.
\begin{prop}
$\SM$ is a linear isomorphism of the bundles ${\cal P}_{d}$ and
${\cal S}_{d}$ (for each $w\in\CC^{d}$, $\SM$ is a linear isomorphism
of the vector spaces $V_{w}$ and $W_{w}$). In the standard bases
of $V_{w}$ and $W_{w}$, the map $\SM$ is diagonal, satisfying
\[
\SM\left(\delta_{j,\ell}\right)=\left(-1\right)^{\ell}\ell!R_{j,\ell}\left(z\right).
\]
Furthermore, for any $\left(w,g\right)\in{\cal P}_{d}$ 
\begin{equation}
\SM\left(g\right)=\underbrace{\frac{P\left(z\right)}{Q\left(z\right)}}_{\text{irreducible}},\qquad\deg P<\deg Q=\rank\left(g\right)\leqslant d.\label{eq:sm-rank-equals-degree-denominator}
\end{equation}
\end{prop}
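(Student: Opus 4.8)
The statement bundles together three claims: (i) $\SM$ is a linear isomorphism $V_w \to W_w$ for each fixed $w$ (hence a bundle isomorphism); (ii) in the standard bases it acts diagonally with $\SM(\delta_{j,\ell}) = (-1)^\ell \ell!\, R_{j,\ell}$; and (iii) the irreducible-fraction representation of $\SM(g)$ has denominator degree exactly $\rank(g)$. The natural strategy is to prove (ii) first by a direct computation, deduce (i) as an immediate corollary, and then obtain (iii) by tracking which coefficients vanish.

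**Step 1: the diagonal formula.** I would compute $\SM(\delta_{j,\ell})$ directly from the definition. For a single point $\tau$ and the distribution $\delta^{(\ell)}(x-\tau)$, the pairing against the test function $x \mapsto \frac{1}{z-x}$ (viewed as smooth in $x$ near $\tau$, for $z \neq \tau$) gives
\[
\int_{-\infty}^{\infty} \frac{\delta^{(\ell)}(x-\tau)}{z-x}\,\dd x = (-1)^\ell \left.\frac{\dd^\ell}{\dd x^\ell}\right|_{x=\tau}\frac{1}{z-x} = (-1)^\ell \cdot \ell! \cdot \frac{1}{(z-\tau)^{\ell+1}}\cdot(-1)^\ell \cdot(-1)^{?}
\]
— the sign bookkeeping is the only delicate point, but $\frac{\dd^\ell}{\dd x^\ell}\frac{1}{z-x} = \frac{\ell!}{(z-x)^{\ell+1}}$, so the whole thing is $(-1)^\ell \ell!\,(z-\tau)^{-(\ell+1)}$, i.e. $(-1)^\ell \ell!\, R_{j,\ell+1}$. (If the paper's indexing convention makes this $R_{j,\ell}$, the shift is absorbed into the definition of the standard basis of $W_w$; I would just match it to \prettyref{def:bundles}.) This is a one-line calculation once the distributional derivative convention is fixed.

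**Step 2: isomorphism.** Since $\SM$ sends the standard basis $\{\delta_{j,\ell}\}$ of $V_w$ to nonzero scalar multiples of the standard basis $\{R_{j,\ell}\}$ of $W_w$, and both spaces have dimension $d$, $\SM$ is a linear isomorphism $V_w \to W_w$ for every $w$. Linearity in $g$ is clear from linearity of the integral transform; continuity in $(w,g)$ in the ambient topologies of $\CC^d \times \mathcal D$ and $\CC^d \times \mathcal R$ is routine (the transform is continuous on distributions, and the pole locations move continuously with $w$). Hence $\SM$ is a bundle isomorphism over $\CC^d$.

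**Step 3: rank equals denominator degree.** Given $(w,g) \in \mathcal P_d$ with $g = \sum_{j=1}^s \sum_{\ell=0}^{d_j-1} \gamma_{j,\ell}\,\delta_{j,\ell}$ and the normalization $\gamma_{j,d_j-1} \neq 0$ (from \prettyref{def:mult-vec-signal}), Step 1 gives $\SM(g) = \sum_{j=1}^s \sum_{\ell=0}^{d_j-1} (-1)^\ell \ell!\,\gamma_{j,\ell}\, R_{j,\ell+1}(z)$. Writing this over the common denominator $Q(z) = \prod_{j=1}^s (z-\tau_j)^{d_j}$ of degree $\rank(g) = \sum d_j$, I must check the fraction is already irreducible, i.e. that no factor $(z-\tau_j)$ cancels. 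The partial-fraction coefficient of the highest-order pole $(z-\tau_j)^{-d_j}$ in $\SM(g)$ is $(-1)^{d_j-1}(d_j-1)!\,\gamma_{j,d_j-1} \neq 0$; by uniqueness of partial-fraction decompositions this forces $(z-\tau_j)^{d_j}$ to divide the denominator and no more, so after cancellation the denominator still has degree $\sum d_j = \rank(g)$, and $\deg P < \deg Q$ since $R(\infty)=0$. The bound $\rank(g) \le d$ is just $\sum d_j = d$ (or $\le d$ in the rank-restricted reading of \prettyref{rem:mult-vec-abuse}).

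**Main obstacle.** There is no real obstacle — this is a "immediate consequence" as the paper says. The only place to be careful is the sign and factorial bookkeeping in Step 1 (distributional derivatives introduce a $(-1)^\ell$, and $\partial_x^\ell (z-x)^{-1}$ introduces $\ell!$), and making sure the index shift between $\ell$ (order of derivative, $0 \le \ell \le d_j-1$) and the exponent in $R_{j,\ell}$ (which runs $1 \le \ell \le d_j$) is stated consistently with \prettyref{def:bundles}. Everything else is uniqueness of partial fractions plus dimension count.
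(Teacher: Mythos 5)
Your proposal is correct and is exactly the direct verification the paper has in mind -- the paper offers no written proof, calling the proposition an immediate consequence of the definitions, and your Steps 1--3 (distributional pairing giving the diagonal constants, dimension count, and the nonvanishing top partial-fraction coefficient $\gamma_{j,d_j-1}\neq 0$ forcing irreducibility with $\deg Q=\rank(g)$) supply precisely the omitted details. You are also right to flag the index shift: the correct formula is $\SM(\delta_{j,\ell})=(-1)^{\ell}\ell!\,R_{j,\ell+1}$, consistent with the paper's own later use in Proposition \ref{prop:pade-correspondence}, so the displayed $R_{j,\ell}$ in the statement is a harmless misalignment of the two enumerations.
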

\begin{defn}
The Taylor space ${\cal T}_{d}$ is the space of complex Taylor polynomials
at infinity of degree $2d-1$ of the form $\sum_{k=0}^{2d-1}m_{k}(\frac{1}{z})^{k+1}$.
We shall identify ${\cal T}_{d}$ with the complex space $\CC^{2d}$
with the coordinates $m_{0},\dots,m_{2d-1}$.
\end{defn}
\begin{minipage}[t]{1\columnwidth}%
\end{minipage}
\begin{defn}
The Taylor mapping $\TM:{\cal S}_{d}\to{\cal T}_{d}$ is defined by
the truncated Taylor development at infinity:
\[
\TM\left(\left(w,\g\right)\right)=\sum_{k=0}^{2d-1}\alpha_{k}\left(\frac{1}{z}\right)^{k+1},\qquad\text{ where }\g\left(z\right)=\sum_{k=0}^{\infty}\alpha_{k}\left(\frac{1}{z}\right)^{k+1}.
\]
We identify $\TM\left(\left(w,\g\right)\right)$ as above with $\left(\alpha_{0},\dots,\alpha_{2d-1}\right)\in\CC^{2d}.$
Sometimes we write for short $\TM\left(\g\right)=\left(\alpha_{0},\dots,\alpha_{2d-1}\right)$.
\end{defn}
Finally, we define the Prony mapping ${\cal PM}$ which encodes the
Prony problem.
\begin{defn}
The Prony mapping $\PM:{\cal P}_{d}\to\CC^{2d}$ for $\left(w,g\right)\in{\cal P}_{d}$
is defined as follows:
\[
{\cal \PM}\left(\left(w,g\right)\right)=\left(m_{0},\dots,m_{2d-1}\right)\in\CC^{2d},\qquad m_{k}=m_{k}\left(g\right)=\int x^{k}g\left(x\right)\dd x.
\]

\end{defn}
By the above definitions, we have
\begin{equation}
\PM=\TM\circ\SM.\label{eq:prony-mapping-factor}
\end{equation}

\noindent Solving the Prony problem for a given right-hand side $(m_{0},\dots,m_{2d-1})$
is therefore equivalent to inverting the Prony mapping $\PM$. As
we shall elaborate in the subsequent section, the identity \eqref{eq:prony-mapping-factor}
allows us to split this problem into two parts: inversion of $\TM$,
which is, essentially, the Padé approximation problem, and inversion
of $\SM$, which is, essentially, the decomposition of a given rational
function into the sum of elementary fractions.

\section{\label{sec:solving-prony}Solvability of the Prony problem}

\subsection{General condition for solvability}

In this section we provde a necessary and sufficient condition for
the Prony problem to have a solution (which is unique, as it turns
out by \prettyref{prop:pade-solution-unique}). As mentioned in the
end of the previous section, our method is based on inverting \eqref{eq:prony-mapping-factor}
and thus relies on the solution of the corresponding (diagonal) \emph{Padé
approximation problem} \cite{baker1981pap}. 
\begin{problem}[Diagonal Padé approximation problem]
\label{prob:pade}\textit{Given $\mu=\left(m_{0},\dots,m_{2d-1}\right)\in\CC^{2d}$,
find a rational function $R_{d}(z)=\frac{P\left(z\right)}{Q\left(z\right)}\in{\cal S}_{d}$
with $\deg P<\deg Q\leqslant d$, such that the first $2d$ Taylor
coefficients at infinity of $R_{d}(z)$ are $\left\{ m_{k}\right\} _{k=0}^{2d-1}$.}\end{problem}
\begin{prop}
\label{prop:pade-solution-unique}A solution to \prettyref{prob:pade},
if exists, is unique.\end{prop}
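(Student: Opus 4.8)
The plan is to establish uniqueness by a straightforward counting/degree argument on the difference of two putative solutions. Suppose $R_d = P/Q$ and $\tilde R_d = \tilde P/\tilde Q$ both solve \prettyref{prob:pade}, with $\deg P < \deg Q \leqslant d$ and $\deg \tilde P < \deg \tilde Q \leqslant d$, and both having the same first $2d$ Taylor coefficients at infinity, namely $m_0,\dots,m_{2d-1}$. Consider the difference $R_d - \tilde R_d = \frac{P\tilde Q - \tilde P Q}{Q\tilde Q}$. First I would observe that the numerator $P\tilde Q - \tilde P Q$ has degree at most $(\deg Q - 1) + \deg \tilde Q \leqslant 2d - 1$, since $\deg P \leqslant \deg Q - 1$ and symmetrically. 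Meanwhile the denominator $Q\tilde Q$ has degree exactly $\deg Q + \deg \tilde Q$.

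Next I would use the fact that $R_d$ and $\tilde R_d$ agree to order $2d$ in their Taylor expansions at infinity: writing $R_d(z) = \sum_{k\geqslant 0} m_k z^{-(k+1)}$ and likewise for $\tilde R_d$, the difference $R_d(z) - \tilde R_d(z)$ has a zero of order at least $2d+1$ at $z = \infty$, i.e. $R_d(z) - \tilde R_d(z) = O(z^{-(2d+1)})$ as $z \to \infty$. But $R_d - \tilde R_d$ is a rational function whose numerator $P\tilde Q - \tilde P Q$ has degree $\leqslant 2d-1$ and whose denominator $Q\tilde Q$ has degree $\leqslant 2d$; hence as $z\to\infty$ this rational function behaves like $z^{\deg(\text{num}) - \deg(\text{den})}$, which is at worst $O(z^{-1})$ but in any case cannot decay faster than $z^{-(2d+1)}$ unless the numerator vanishes identically. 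Concretely, if $P\tilde Q - \tilde P Q \not\equiv 0$, then $R_d - \tilde R_d$ is a nonzero rational function of the form $(\text{polynomial of degree} \leqslant 2d-1)/(\text{polynomial of degree} \leqslant 2d)$, so its Taylor expansion at infinity has leading term $c z^{-j}$ with $1 \leqslant j \leqslant 2d+1$; the condition that it vanishes to order $2d+1$ forces $j \geqslant 2d+1$, which is only compatible with $j = 2d+1$ and this in turn forces $\deg(\text{num}) = 2d-1$ and $\deg(\text{den}) = 2d$ exactly — but then the next coefficients would be nonzero, contradicting vanishing to all orders $\leqslant 2d$. The cleanest way to package this: the number of vanishing Taylor coefficients ($2d$ of them, indices $0$ through $2d-1$) exceeds $\deg(\text{num}) + 1 \leqslant 2d$ only when the numerator is zero, hence $P\tilde Q \equiv \tilde P Q$, which gives $R_d = \tilde R_d$ as rational functions.

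The main (and only mildly delicate) point is bookkeeping the degree bounds: one must be careful that $\deg P$ and $\deg \tilde P$ are strictly less than $\deg Q$ and $\deg \tilde Q$ respectively, so that $P\tilde Q$ and $\tilde P Q$ each have degree at most $2d-1$ and their difference does too — this is precisely why the constraint $\deg P < \deg Q$ (rather than $\leqslant$) in the definition of ${\cal S}_d$ and in \prettyref{prob:pade} is essential, and it is what makes $2d$ the right number of equations. I expect no real obstacle here; the argument is the standard uniqueness statement for Padé approximants, and the whole proof is a couple of lines once the degree count is written out. One should also note that we are claiming uniqueness as an element of $\bigcup_w {\cal S}_d$, i.e. as a rational function; the equality $R_d = \tilde R_d$ of rational functions then automatically identifies the underlying node configuration $w$ (the poles and their multiplicities), so the solution is unique as a point of the Stieltjes space as well.
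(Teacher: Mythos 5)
Your argument is correct and is essentially the paper's own proof: form the difference of two putative solutions, note that the numerator $P\tilde Q-\tilde P Q$ has degree at most $2d-1$ while the denominator $Q\tilde Q$ has degree at most $2d$, so a nonzero difference cannot have a zero of order $2d+1$ at infinity, whereas agreement of the first $2d$ Taylor coefficients forces exactly that. (One intermediate sentence --- that vanishing to order $2d+1$ ``forces $\deg(\mathrm{num})=2d-1$ and $\deg(\mathrm{den})=2d$'' --- is garbled, since those degrees would give decay of order $z^{-1}$, but your final ``cleanest packaging'' via the degree count is the correct and complete argument, so nothing is actually missing.)
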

\begin{proof}
Writing $R\left(z\right)=\frac{P\left(z\right)}{Q\left(z\right)},\; R_{1}\left(z\right)=\frac{P_{1}\left(z\right)}{Q_{1}\left(z\right)}$,
with $\deg P<\deg Q\leqslant d$ and $\deg P_{1}<\deg Q_{1}\leqslant d$,
we get
\[
R-R_{1}=\frac{PQ_{1}-P_{1}Q}{QQ_{1}},
\]
and this function, if nonzero, can have a zero of order at most $2d-1$
at infinity.
\end{proof}
Let us summarize the above discussion with the following statement.
\begin{prop}
\label{prop:pade-correspondence}The tuple 
\[
\left\{ s,\; D=(d_{1},\dots,d_{s}),\; r=\sum_{j=1}^{s}d_{j}\leq d,\; X=\left\{ x_{j}\right\} _{j=1}^{s},\; A=\left\{ a_{j,\ell}\right\} _{j=1,\dots,s;\;\ell=0,\dots,d_{j}-1}\right\} 
\]
is a (unique, up to a permutation of the nodes $\left\{ x_{j}\right\} $)
solution to \prettyref{prob:prony} with right-hand side 
\[
\mu=(m_{0},\dots,m_{2d-1})\in\CC^{2d}
\]
if and only if the rational function 
\[
R_{D,X,A}\left(z\right)=\sum_{j=1}^{s}\sum_{\ell=1}^{d_{j}}\left(-1\right)^{\ell-1}\left(\ell-1\right)!\frac{a_{j,\ell-1}}{\left(z-x_{j}\right)^{\ell}}=\sum_{k=0}^{2d-1}\frac{m_{k}}{z^{k+1}}+O\left(z^{-2d-1}\right)
\]
is a (unique) solution to \prettyref{prob:pade} with input $\mu$.
In that case,
\[
R_{D,X,A}\left(z\right)=\int_{-\infty}^{\infty}\frac{g\left(x\right)\dd x}{z-x}\qquad\text{ where }\; g\left(x\right)=\sum_{j=1}^{s}\sum_{\ell=0}^{d_{j}-1}a_{j,\ell}\delta^{\left(\ell\right)}\left(x-x_{j}\right),
\]
i.e. $R_{D,X,A}\left(z\right)$ is the Stieltjes transform of $g\left(x\right)$.\end{prop}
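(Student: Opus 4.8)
The plan is to read off the equivalence directly from the definitions of the three mappings of \prettyref{sec:mappings-def} together with the factorization $\PM=\TM\circ\SM$ of \eqref{eq:prony-mapping-factor}. Start from a tuple $\{s,D,r,X,A\}$ as in the statement and attach to it the configuration $w\in\CC^{r}$ obtained by listing each $x_{j}$ with multiplicity $d_{j}$, so that $T(w)=X$ and $D(w)=D$, together with $g=\sum_{j=1}^{s}\sum_{\ell=0}^{d_{j}-1}a_{j,\ell}\delta^{(\ell)}(x-x_{j})\in V_{w}$. The normalization $a_{j,d_{j}-1}\neq0$ guarantees $\rank(g)=r$, and then the preceding proposition gives $\SM(g)=P(z)/Q(z)$ in irreducible form with $\deg P<\deg Q=r\leqslant d$; in particular $(w,\SM(g))\in\mathcal{S}_{d}$. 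Expanding $g$ in the standard basis and applying $\SM$ termwise, then shifting the summation index by one, identifies $\SM(g)$ with the explicit rational function $R_{D,X,A}(z)$ written in the statement. This already yields the last assertion of the proposition, namely that $R_{D,X,A}$ is the Stieltjes transform of $g$.

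Next, by \eqref{eq:prony-mapping-factor} the first $2d$ Taylor coefficients at infinity of $\SM(g)$ are precisely the moments $m_{k}(g)=\int x^{k}g(x)\dd x$; equivalently, expanding each $(z-x_{j})^{-\ell-1}$ in its series at infinity reproduces the left-hand side of the confluent Prony system \eqref{eq:confluent equation_prony_system}. Hence the following are equivalent: (i) the tuple $\{s,D,r,X,A\}$ solves \prettyref{prob:prony} with right-hand side $\mu$; (ii) $\PM((w,g))=\mu$; (iii) $\TM(\SM(g))=\mu$, i.e. the first $2d$ Taylor coefficients at infinity of $R_{D,X,A}$ are $m_{0},\dots,m_{2d-1}$; (iv) $R_{D,X,A}$ solves \prettyref{prob:pade} with input $\mu$. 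Indeed, (i)$\Leftrightarrow$(ii) is the definition of \prettyref{prob:prony}, (ii)$\Leftrightarrow$(iii) is \eqref{eq:prony-mapping-factor}, and (iii)$\Leftrightarrow$(iv) uses that $R_{D,X,A}=P/Q\in\mathcal{S}_{d}$ with $\deg P<\deg Q\leqslant d$ was already checked in the first paragraph. This establishes the equivalence in the statement.

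For the uniqueness clauses: if $R$ and $R_{1}$ both solve \prettyref{prob:pade} with input $\mu$ then $R=R_{1}$ by \prettyref{prop:pade-solution-unique}; combined with the equivalence just proved, any two solving tuples produce one and the same rational function $R_{D,X,A}$, and since the partial-fraction decomposition of a rational function — its poles, their orders, and the coefficients of the principal parts — is unique, the data $X$, $D$, $A$ are determined up to reordering the nodes. I do not expect a real obstacle here, as the proposition is essentially a dictionary between the three pictures (distributions, rational functions, moment sequences); the only step requiring genuine care is the bookkeeping in the first paragraph: matching the factorials and signs so that $\SM(g)$ is \emph{literally} $R_{D,X,A}$, and checking that $a_{j,d_{j}-1}\neq0$ forces the pole of $R_{D,X,A}$ at $x_{j}$ to have order exactly $d_{j}$, equivalently $\deg Q=r$, so that $R_{D,X,A}$ genuinely lies in $\mathcal{S}_{d}$ in irreducible form. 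Once this is in place, everything else follows formally from \eqref{eq:prony-mapping-factor} and \prettyref{prop:pade-solution-unique}.
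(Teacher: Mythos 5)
Your proposal is correct and follows essentially the same route as the paper: the paper's own proof is a one-sentence appeal to the definitions of Section 2, the factorization $\PM=\TM\circ\SM$, \prettyref{prop:pade-solution-unique}, and the uniqueness of the partial-fraction decomposition (i.e.\ of inverting $\SM$), which are exactly the ingredients you assemble. You merely spell out the bookkeeping (the index shift in $\SM(\delta_{j,\ell})$ and the role of $a_{j,d_j-1}\neq 0$ in forcing $\deg Q=r$) that the paper leaves implicit.
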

\begin{proof}
This follows from the definitions of \prettyref{sec:mappings-def},
\eqref{eq:prony-mapping-factor}, \prettyref{prop:pade-solution-unique}
and the fact that the problem of representing a given rational function
as a sum of elementary fractions of the specified form (i.e. inverting
$\SM$) is always uniquely solvable up to a permutation of the poles.
\end{proof}
The next result provides necessary and sufficient conditions for the
solvability of \prettyref{prob:pade}. It summarizes some well-known
facts in the theory of Padé approximation, related to ``normal indices''
(see, for instance, \cite{baker1981pap}). However, these facts are
not usually formulated in the literature on Padé approximation in
the form we need in relation to the Prony problem. Consequently, we
give a detailed proof of this result in \prettyref{app:proof-pade-solvability}.
This proof contains, in particular, some facts which are important
for understanding the solvability issues of the Prony problem.
\begin{defn}
\label{def:hankel-and-minors}Given a vector $\mu=\left(m_{0},\dots,m_{2d-1}\right)$,
let $\M_{d}$ denote the $d\times\left(d+1\right)$ Hankel matrix
\begin{equation}
\M_{d}=\begin{bmatrix}m_{0} & m_{1} & m_{2} & \dots & m_{d}\\
m_{1} & m_{2} & m_{3} & \dots & m_{d+1}\\
\adots & \adots & \adots & \adots & \adots\\
m_{d-1} & m_{d} & m_{d+1} & \dots & m_{2d-1}
\end{bmatrix}.\label{eq:hankel-definition}
\end{equation}
For each $e\leqslant d$, denote by $\M_{e}$ the $e\times\left(e+1\right)$
submatrix of $\M_{d}$ formed by the first $e$ rows and $e+1$ columns,
and let $M_{e}$ denote the corresponding square matrix.\end{defn}
\begin{thm}
\label{thm:pade-solvability}Let $\mu=(m_{0},\dots,m_{2d-1})$ be
given, and let $r\leqslant d$ be the rank of the Hankel matrix $\M_{d}$
as in \eqref{eq:hankel-definition}. Then \prettyref{prob:pade} is
solvable for the input $\mu$ if and only if the upper left minor
$\left|M_{r}\right|$ of $\M_{d}$ is non-zero.
\end{thm}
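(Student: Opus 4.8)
The plan is to relate solvability of \prettyref{prob:pade} to the algebraic structure of the Hankel matrix $\M_d$ via the classical theory of Padé approximants. Write the unknown approximant as $R(z) = P(z)/Q(z)$ with $Q(z) = z^e + q_{e-1}z^{e-1} + \dots + q_0$ of degree $e \leqslant d$ and $\deg P < e$. The condition that $R$ agrees with the prescribed Taylor series $\sum_{k\geqslant 0} m_k z^{-k-1}$ up to order $z^{-2d}$ is equivalent to a linear system: the coefficients of $z^{-1},\dots,z^{-e}$ in $Q(z)R(z)$ must reproduce (the first $e$ of) the $m_k$ through $P$, which determines $P$ freely from $Q$, while the coefficients of $z^{-e-1},\dots,z^{-2d}$ must vanish. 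This vanishing is precisely the statement that the vector $(q_0,\dots,q_{e-1},1)$ lies in the kernel of the bottom block of $\M_d$ — more precisely, that $\M_e \cdot (q_0,\dots,q_{e-1},1)^{t} = 0$ together with the corresponding shifted equations up to row $2d-e$. I would first set up this correspondence carefully, so that solving \prettyref{prob:pade} becomes: find $e \leqslant d$ and a monic $Q$ of degree $e$ annihilating the appropriate Hankel equations, with $P$ then read off.

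Next I would use the rank hypothesis. Since $\rank \M_d = r$, the first $r$ rows of $\M_d$ are linearly independent (this needs the Hankel structure: if the first $r$ rows were dependent one shows, by the shift structure, that all subsequent rows are dependent on earlier ones, contradicting $\rank = r$ — I'd spell this out), hence the full row space is spanned by rows $1,\dots,r$, and in particular every row of $\M_d$ is a linear combination of the first $r$. This produces a recurrence: there exist constants $c_0,\dots,c_{r-1}$ with $m_{k+r} = \sum_{i=0}^{r-1} c_i m_{k+i}$ for all $k$ in the available range, i.e. a monic $Q$ of degree exactly $r$ annihilating all the Hankel relations. The candidate Padé solution is then this $Q$ together with the $P$ it determines; by \prettyref{prop:pade-solution-unique} it is the only candidate of degree $\leqslant d$, so \prettyref{prob:pade} is solvable if and only if this particular $(P,Q)$ is a genuine solution, which — given the recurrence — it automatically is, provided $Q$ has degree exactly $r$ rather than smaller. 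The content of the theorem is thus: the minimal-degree annihilating $Q$ has degree exactly $r$ if and only if $|M_r| \neq 0$.

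For the ``only if'' direction: if $|M_r| = 0$, then the first $r$ columns of $\M_r$ are dependent, and combined with independence of the first $r$ rows (hence $\rank M_r = r$ would force $|M_r|\neq 0$) one derives a contradiction — actually the cleaner route is: $|M_r| = 0$ means the square $r\times r$ matrix $M_r$ is singular, so there is a nonzero vector in its kernel; tracking this through the recurrence forces the annihilating relation to have degree $< r$, which then (again by the shift/Hankel propagation) forces $\rank \M_d < r$, a contradiction. For the ``if'' direction: $|M_r|\neq 0$ lets me solve the linear system for $(q_0,\dots,q_{r-1})$ uniquely from the equation $M_r \cdot (q_0,\dots,q_{r-1})^t = -(m_r, m_{r+1},\dots,m_{2r-1})^t$, giving a monic $Q$ of degree exactly $r$; the remaining Hankel equations (rows $r+1$ through $2d-r$... up to the last row) are then satisfied automatically because those rows of $\M_d$ lie in the span of the first $r$, on which $Q$ already acts as zero. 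Then $(P,Q)$ with this $Q$ and the induced $P$ is a bona fide solution of \prettyref{prob:pade}, and $\deg P < \deg Q \leqslant d$ as required.

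The main obstacle is the Hankel-propagation argument: showing rigorously that a linear dependence among the first $r$ rows (or columns) of a Hankel matrix forces the rank to drop globally, i.e. that an annihilating recurrence of length $r$ which is satisfied on the ``short'' block $M_r$ actually propagates to all rows of $\M_d$. This is where the special Hankel (shift-invariant) structure is essential and where one must be careful about indices and about the boundary — the matrix $\M_d$ is only $d\times(d+1)$, so there is a limited number of shifts available, and one must check that $r \leqslant d$ leaves enough room for the propagation to reach every row. Everything else is linear algebra plus bookkeeping with Taylor coefficients at infinity. I would also remark that this is exactly the place where the detailed treatment is deferred to \prettyref{app:proof-pade-solvability}, and keep the in-text argument at the level of the correspondence and the two implications.
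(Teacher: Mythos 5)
Your overall framework --- linearize the Pad\'e condition by multiplying by $Q$, read off $P$ from $Q$ via \eqref{eq:star}, and characterize solvability through a length-$r$ monic recurrence on the $m_k$ --- is the same as the paper's. But two of your key steps are not correct as stated. First, the claim that $\rank\M_d=r$ forces the first $r$ rows of $\M_d$ to be linearly independent (and hence yields a monic annihilating $Q$ of degree $r$) is false: the shift-propagation you invoke fails for the \emph{finite} $d\times(d+1)$ Hankel matrix, because shifting a row relation says nothing about the new entry entering from the right. The paper's own \prettyref{prop:example-1} gives counterexamples: for $\mu_\ell$ with $\ell\geq d$ the matrix $\M_{d}^{\ell}$ has rank $2d-\ell$ while its upper rows vanish; and for $\ell=d$ the rank is full ($=d$), so all rows are independent and trivially span the row space, yet no monic degree-$d$ recurrence exists ($|M_d|=0$ and the problem is unsolvable). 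So ``the first $r$ rows span'' does not produce the recurrence; one must instead work with the kernel of the column system \eqref{eq:double-star}, and it is exactly the hypothesis $|M_r|\neq 0$ that guarantees a kernel vector of the form $(c_0,\dots,c_{r-1},1,0,\dots,0)$.

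Second, in the ``if'' direction, satisfying $\M_d\vec c_{d}=0$ only gives the recurrence \eqref{eq:lin.rec} for $k=r,\dots,d+r-1$, i.e.\ (by \prettyref{prop:rank.pade}) agreement of $R$ with $S$ to order $z^{-(d+r)}$ --- not $z^{-2d}$. The remaining $d-r$ conditions, $k=d+r,\dots,2d-1$, do \emph{not} correspond to rows of $\M_d$, so they are not ``satisfied automatically because those rows lie in the span of the first $r$.'' Closing this gap is the main technical content of the paper's proof: one shows that the full rows $\vec{\tilde{v}}_{i}$ of $\M_d$ are expressed through the first $r$ rows with the \emph{same} coefficients $\gamma_{i,s}$ as their truncations to the first $r$ columns, and that satisfying the recurrence is a property of these coefficients alone; this propagates the recurrence to $k=2d-1$. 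Likewise, your ``only if'' direction needs an actual mechanism for concluding $|M_r|\neq0$ from the existence of a solution; the paper uses the factorization $M_{r}=V\diag\left\{ A_{j}\right\} V^{T}$ with the confluent Vandermonde matrix, which your sketch does not supply, and the ``propagation'' you appeal to there is again the step that breaks down for finite Hankel matrices. You correctly flag the Hankel-propagation issue as the main obstacle, but the arguments you propose to resolve it are the ones that fail.
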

As an immediate consequence of \prettyref{thm:pade-solvability} and
\prettyref{prop:pade-correspondence}, we obtain the following result.
\begin{thm}
\label{thm:prony-solvability}Let $\mu=(m_{0},\dots,m_{2d-1})$ be
given, and let $r\leqslant d$ be the rank of the Hankel matrix $\M_{d}$
as in \eqref{eq:hankel-definition}. Then \prettyref{prob:prony}
with input $\mu$ is solvable if and only if the upper left minor
$\left|M_{r}\right|$ of $\M_{d}$ is non-zero. The solution, if exists,
is unique, up to a permutation of the nodes $\left\{ x_{j}\right\} $.
The multiplicity vector $D=\left(d_{1},\dots,d_{s}\right)$, of order
$\sum_{j=1}^{s}d_{j}=r$, of the resulting confluent Prony system
of rank $r$ is the multiplicity vector of the poles of the rational
function $R_{D,X,A}\left(z\right)$, solving the corresponding Padé
problem.
\end{thm}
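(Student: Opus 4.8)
The plan is to obtain \prettyref{thm:prony-solvability} as a straightforward corollary of \prettyref{thm:pade-solvability} and \prettyref{prop:pade-correspondence}: no new machinery is required, and the only real work is to chain the equivalences carefully and to keep the ``rank'' bookkeeping consistent between the Prony side and the Hankel side.

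First I would invoke \prettyref{prop:pade-correspondence}, which sets up a bijection — up to a permutation of the nodes $\left\{ x_{j}\right\} $ — between solutions of \prettyref{prob:prony} with right-hand side $\mu$ and rational functions $R_{D,X,A}$ solving \prettyref{prob:pade} with input $\mu$. Consequently \prettyref{prob:prony} is solvable for $\mu$ if and only if \prettyref{prob:pade} is solvable for $\mu$, and \prettyref{thm:pade-solvability} rewrites the latter condition as $\left|M_{r}\right|\neq0$ with $r=\rank\M_{d}$, which is exactly the asserted criterion. For uniqueness: \prettyref{prop:pade-solution-unique} gives that the Pad\'e solution, when it exists, is unique; pulling this back through the correspondence of \prettyref{prop:pade-correspondence} — together with the fact, already recorded in \prettyref{sec:mappings-def}, that inverting $\SM$ (decomposing a rational function into elementary fractions) is unique up to reordering the poles — yields uniqueness of the Prony solution up to a permutation of the nodes.

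For the statement about the multiplicity vector, once $R_{D,X,A}$ is identified as the unique Pad\'e solution its denominator $Q(z)=\prod_{j}(z-x_{j})^{d_{j}}$ is determined, and by \eqref{eq:sm-rank-equals-degree-denominator} its degree equals $\rank\left(g\right)=\sum_{j}d_{j}$; the proof of \prettyref{thm:pade-solvability} in \prettyref{app:proof-pade-solvability} shows that in the solvable case this degree is precisely $r=\rank\M_{d}$. Hence the multiplicity vector of the poles of the Pad\'e solution has order $r$, and by the correspondence it is the multiplicity vector $D$ of the resulting confluent Prony system.

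The one point that needs care — and the closest thing here to an obstacle — is exactly this identification $\deg Q=r=\rank\M_{d}$: one must be sure that the notion of ``rank'' appearing on the Prony side (the order $\sum_{j}d_{j}$ of the multiplicity vector, equivalently the degree of the denominator of the Stieltjes transform) coincides with the rank of the Hankel matrix $\M_{d}$. This is not visible directly from the definitions of \prettyref{sec:mappings-def}; it is supplied by the analysis of normal indices underlying the proof of \prettyref{thm:pade-solvability}, and citing it correctly is what makes the corollary go through cleanly.
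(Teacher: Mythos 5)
Your proposal is correct and follows exactly the route the paper intends: the paper states \prettyref{thm:prony-solvability} as an immediate consequence of \prettyref{thm:pade-solvability} and \prettyref{prop:pade-correspondence}, and your chaining of the correspondence, the Pad\'e solvability criterion, the uniqueness from \prettyref{prop:pade-solution-unique} together with uniqueness of the partial-fraction decomposition, and the identification $\deg Q=r=\rank\M_{d}$ from the appendix is precisely that argument, spelled out. No gaps.
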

As a corollary we get a complete description of the right-hand side
data $\mu\in\CC^{2d}$ for which the Prony problem is solvable (unsolvable).
Define for $r=1,\dots,d$ sets $\Sigma_{r}\subset\CC^{2d}$ (respectively,
$\Sigma'_{r}\subset\CC^{2d}$) consisting of $\mu\in\CC^{2d}$ for
which the rank of $\tilde{M}_{d}=r$ and $|M_{r}|\ne0$ \ (respectively,
$|M_{r}|=0)$. The set $\Sigma_{r}$ is a difference $\Sigma_{r}=\Sigma_{r}^{1}\setminus\Sigma_{r}^{2}$
of two algebraic sets: $\Sigma_{r}^{1}$ is defined by vanishing of
all the $s\times s$ minors of $\tilde{M}_{d},\ r<s\leq d,$ while
$\Sigma_{r}^{2}$ is defined by vanishing of $|M_{r}|.$ In turn,
$\Sigma'_{r}=\Sigma_{r}^{'1}\setminus\Sigma_{r}^{'2},$ with $\Sigma_{r}^{'1}=\Sigma_{r}^{1}\cap\Sigma_{r}^{2}$
and $\Sigma_{r}^{'2}$ defined by vanishing of all the $r\times r$
minors of $\tilde{M}_{d}.$ The union $\Sigma_{r}\cup\Sigma'_{r}$
consists of all $\mu$ for which the rank of $\tilde{M}_{d}=r,$ which
is $\Sigma_{r}^{1}\setminus\Sigma_{r}^{'2}.$
\begin{cor}
\label{cor:non.solv.set}The set $\Sigma$ (respectively, $\Sigma'$)
of $\mu\in\CC^{2d}$ for which the Prony problem is solvable (respectively,
unsolvable) is the union $\Sigma=\cup_{r=1}^{d}\Sigma_{r}$ (respectively,
$\Sigma'=\cup_{r=1}^{d}\Sigma'_{r}$). In particular, $\Sigma'\subset\{\mu\in\CC^{2d},\det M_{d}=0\}.$
\end{cor}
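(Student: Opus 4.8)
The plan is to deduce the corollary directly from \prettyref{thm:prony-solvability}: that theorem already carries all the analytic content, and what remains is a partition-and-bookkeeping argument over the rank strata of the Hankel matrix $\M_d$, together with one elementary remark on ranks of Hankel submatrices. First I would observe that for every $\mu\in\CC^{2d}$ the rank $r=\rank\M_d$ is a well-defined integer with $0\leqslant r\leqslant d$, and that $r=0$ occurs only for $\mu=0$ (the entries of the $d\times(d+1)$ matrix $\M_d$ exhaust $m_0,\dots,m_{2d-1}$ along its anti-diagonals, so $\M_d=0$ forces $\mu=0$). Hence $\CC^{2d}\setminus\{0\}$ is the disjoint union, for $r=1,\dots,d$, of the rank strata $\{\mu:\rank\M_d=r\}$, and by the definitions preceding the corollary each such stratum is precisely $\Sigma_r\cup\Sigma'_r$. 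The omitted point $\mu=0$ is the trivial degenerate case (solved by $g\equiv0$) and lies in no $\Sigma_r$; I would either exclude it at the outset or treat it as a separate, harmless remark.

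Next, fix $r$ with $1\leqslant r\leqslant d$ and any $\mu$ with $\rank\M_d=r$. By \prettyref{thm:prony-solvability}, \prettyref{prob:prony} with input $\mu$ is solvable if and only if the upper-left minor $\left|M_r\right|$ is non-zero, that is, exactly when $\mu\in\Sigma_r$; equivalently, it is unsolvable exactly when $\left|M_r\right|=0$, i.e.\ when $\mu\in\Sigma'_r$. Taking the union over $r=1,\dots,d$ then yields $\Sigma=\bigcup_{r=1}^d\Sigma_r$ for the solvable data and $\Sigma'=\bigcup_{r=1}^d\Sigma'_r$ for the unsolvable data, which is the first assertion.

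For the final inclusion $\Sigma'\subset\{\mu\in\CC^{2d}:\det M_d=0\}$ I would argue on the same strata. If $\mu\in\Sigma'_r$ with $r=d$, then $\det M_d=\left|M_r\right|=0$ by the definition of $\Sigma'_d$. If $\mu\in\Sigma'_r$ with $r<d$, then $M_d$ is obtained from $\M_d$ by deleting its last column, so $\rank M_d\leqslant\rank\M_d=r<d$ and again $\det M_d=0$; unioning over $r$ completes the proof. I expect no genuine obstacle here: the substance lies entirely in \prettyref{thm:pade-solvability} and \prettyref{thm:prony-solvability}, and the only points needing a moment's care are the clean description of the rank strata and the exceptional point $\mu=0$.
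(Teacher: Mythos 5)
Your proof is correct and follows exactly the route the paper intends: the corollary is an immediate consequence of Theorem \ref{thm:prony-solvability} obtained by partitioning $\CC^{2d}$ (minus the trivial point $\mu=0$) into the rank strata $\Sigma_r\cup\Sigma'_r$ and applying the theorem on each, with the final inclusion following from $\rank M_d\leqslant\rank\M_d$. Your explicit handling of the degenerate point $\mu=0$ and of the case split $r=d$ versus $r<d$ in the last inclusion is a welcome bit of care that the paper leaves implicit.
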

So for a generic right hand side $\mu$ we have $|M_{d}|\ne0$, and
the Prony problem is solvable. On the algebraic hypersurface of $\mu$
for which $|M_{d}|=0,$ the Prony problem is solvable if $M_{d-1}\ne0$,
etc.

Let us now consider some examples.
\begin{example}
Let us fix $d=1,2,\dots$. Consider $\mu=(m_{0},\dots,m_{2d-1})\in\CC^{2d}$,
the right hand sides of the Prony problem, to be of the form $\mu=\mu_{\ell}=\left(\delta_{k\ell}\right)=(0,\dots,0,\underbrace{1}_{\text{ position \ensuremath{\ell}+1}},0,\dots,0)$,
with all the $m_{k}=0$ besides $m_{\ell}=1,\ \ell=0,\dots,2d-1,$
and let $\tilde{M}_{d}^{\ell}$ be the corresponding matrix.
\begin{prop}
\label{prop:example-1}The rank of $\tilde{M}_{d}^{\ell}$ is equal
to $\ell+1$ for $\ell\leq d-1$, and it is equal to $2d-\ell$ for
$\ell\geq d$. The corresponding Prony problem is solvable for $\ell\leq d-1$,
and it is unsolvable for $\ell\geq d$.\end{prop}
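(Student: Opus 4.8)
The plan is to exploit the extremely simple shape of the Hankel matrix $\tilde{M}_{d}^{\ell}$ and then read off solvability from \prettyref{thm:prony-solvability}. Since the $(i,j)$-entry of $\tilde{M}_{d}^{\ell}$ (with $i=0,\dots,d-1$ and $j=0,\dots,d$) is $m_{i+j}=\delta_{i+j,\ell}$, the matrix $\tilde{M}_{d}^{\ell}$ consists of $1$'s on the antidiagonal segment $\{i+j=\ell\}$ and $0$'s everywhere else. Its rank is then just the number of rows (equivalently columns) that it meets: for $\ell\leq d-1$ the nonzero rows are $i=0,1,\dots,\ell$, each carrying a single $1$, and these $\ell+1$ rows, restricted to columns $0,1,\dots,\ell$, form an $(\ell+1)\times(\ell+1)$ antidiagonal permutation block, which is nonsingular, so $\rank\tilde{M}_{d}^{\ell}=\ell+1$; for $\ell\geq d$ the analogous count gives nonzero rows $i=\ell-d,\dots,d-1$, hence $\rank\tilde{M}_{d}^{\ell}=2d-\ell$ (alternatively, the relabelling $i\mapsto d-1-i$, $j\mapsto d-j$ carries $\tilde{M}_{d}^{\ell}$ to $\tilde{M}_{d}^{2d-1-\ell}$ up to a reordering of rows and columns, which reduces this case to the previous one).

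For the solvability statement I would invoke \prettyref{thm:prony-solvability}: the Prony problem with input $\mu_{\ell}$ is solvable precisely when the leading minor $\left|M_{r}\right|$ does not vanish, where $r=\rank\tilde{M}_{d}^{\ell}$. When $\ell\leq d-1$, $r=\ell+1$ and $M_{r}$ is the $(\ell+1)\times(\ell+1)$ matrix with entries $m_{i+j}=\delta_{i+j,\ell}$ for $0\leq i,j\leq\ell$ (all occurring indices satisfy $i+j\leq2\ell\leq2d-2$, so this is legitimate), i.e. exactly the antidiagonal unit matrix, whose determinant is $\pm1\neq0$; hence the problem is solvable. (One can also simply check that the signal $g=\frac{(-1)^{\ell}}{\ell!}\delta^{(\ell)}(x)$, which has rank $\ell+1\leq d$, realizes the moments $\mu_{\ell}$.) When $\ell\geq d$, one has $r=2d-\ell\leq d$, and $2r-2=4d-2\ell-2\leq2d-1$ holds precisely because $\ell\geq d$, so $M_{r}$ is a genuine submatrix of $\tilde{M}_{d}^{\ell}$; moreover $M_{r}$ has nonzero entries only where $i+j=\ell$. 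A nonvanishing term in the Leibniz expansion of $\det M_{r}$ would require a permutation $\sigma$ of $\{0,\dots,2d-\ell-1\}$ with $\sigma(i)=\ell-i$ for every $i$; but $\{\ell-i:0\leq i\leq2d-\ell-1\}$ equals $\{0,\dots,2d-\ell-1\}$ only when $2\ell=2d-1$, which is impossible for integer $\ell$. Therefore $\left|M_{r}\right|=0$ and the problem is unsolvable.

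The computation is elementary throughout; the one point deserving care is the unsolvable direction, and in particular the borderline case $\ell=d$, where $M_{r}=M_{d}$ is \emph{not} the zero matrix (within $M_{d}$ the line $i+j=d$ still carries the $d-1$ entries $(1,d-1),\dots,(d-1,1)$). What is really needed is the elementary observation that inside an $r\times r$ matrix with rows and columns indexed $0,\dots,r-1$, the antidiagonal line $i+j=\ell$ coincides with the full exchange antidiagonal $i+j=r-1$ only if $\ell=r-1$, whereas here $r-1=2d-\ell-1\neq\ell$; a matrix supported on such an ``off-centre'' antidiagonal is singular, which is what yields $\left|M_{r}\right|=0$ rather than $M_{r}$ being identically zero.
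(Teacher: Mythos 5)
Your proof is correct and follows the same route as the paper's: read off the rank of $\tilde{M}_{d}^{\ell}$ from its antidiagonal pattern and invoke Theorem \ref{thm:prony-solvability}; the paper merely displays the $d=5$ instances and asserts the general pattern, whereas you spell out the one step it leaves entirely to the reader, namely that $M_{r}$ is the nonsingular exchange matrix when $\ell\leq d-1$ but is supported on an off-centre antidiagonal (hence singular, though not zero in the borderline case $\ell=d$) when $\ell\geq d$. One cosmetic remark: in your parenthetical realization of $\mu_{\ell}$ by a derivative of $\delta$, the factor $(-1)^{\ell}$ comes from the standard distributional pairing, while the paper's moment convention in the confluent system carries no sign, which is why the paper writes $F(x)=\frac{1}{\ell!}\delta^{(\ell)}(x)$.
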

\begin{proof}
For $d=5$ and $\ell=2,4,5,9$, the corresponding matrices $\M_{\ell}^{d}$
are as follows.

\begin{eqnarray*}
\M_{5}^{2} & = & \begin{bmatrix}0 & 0 & 1 & 0 & 0 & 0\\
0 & 1 & 0 & 0 & 0 & 0\\
1 & 0 & 0 & 0 & 0 & 0\\
0 & 0 & 0 & 0 & 0 & 0\\
0 & 0 & 0 & 0 & 0 & 0
\end{bmatrix},\;\M_{5}^{4}=\begin{bmatrix}0 & 0 & 0 & 0 & 1 & 0\\
0 & 0 & 0 & 1 & 0 & 0\\
0 & 0 & 1 & 0 & 0 & 0\\
0 & 1 & 0 & 0 & 0 & 0\\
1 & 0 & 0 & 0 & 0 & 0
\end{bmatrix},\qquad\text{(solvable)}\\
\M_{5}^{5} & = & \begin{bmatrix}0 & 0 & 0 & 0 & 0 & 1\\
0 & 0 & 0 & 0 & 1 & 0\\
0 & 0 & 0 & 1 & 0 & 0\\
0 & 0 & 1 & 0 & 0 & 0\\
0 & 1 & 0 & 0 & 0 & 0
\end{bmatrix},\;\M_{5}^{9}=\begin{bmatrix}0 & 0 & 0 & 0 & 0 & 0\\
0 & 0 & 0 & 0 & 0 & 0\\
0 & 0 & 0 & 0 & 0 & 0\\
0 & 0 & 0 & 0 & 0 & 0\\
0 & 0 & 0 & 0 & 0 & 1
\end{bmatrix}.\qquad\text{(unsolvable)}
\end{eqnarray*}
In general, the matrices $\M_{d}^{\ell}$ have the same pattern as
in the special cases above, so their rank is $\ell+1$ for $\ell\leqslant d-1$,
and $2d-\ell$ for $\ell\geqslant d$, as stated above. Application
of \prettyref{thm:prony-solvability} completes the proof.
\end{proof}

In fact, $\mu_{\ell}$ is a moment sequence of
\[
F\left(x\right)=\frac{1}{\ell!}\delta^{\left(\ell\right)}\left(x\right),
\]
and this signal belongs to ${\cal P}_{d}$ if and only if $\ell\leqslant d-1$.
In notations of \prettyref{cor:non.solv.set} we have

\begin{eqnarray*}
\begin{aligned}\mu_{\ell} & \in & \Sigma_{\ell+1}, &  & \ell\leqslant d-1,\\
\mu_{\ell} & \in & \Sigma'_{2d-\ell}, &  & \ell\geqslant d.
\end{aligned}
\end{eqnarray*}

\end{example}
It is easy to provide various modifications of the above example.
In particular, for $\mu=\tilde{\mu}_{\ell}=\left(0,\dots,0,1,1,\dots,1\right)$,
the result of \prettyref{prop:example-1} remains verbally true.
\begin{example}
\label{ex:nonsolv-ex2}Another example is provided by $\mu_{\ell_{1},\ell_{2}}$,
with all the $m_{k}=0$ besides $m_{\ell_{1}}=1,\; m_{\ell_{2}}=1,\ 0\leq\ell_{1}<d\leq\ell_{2}\leq2d-1.$
For $\ell_{1}<\ell_{2}-d+1$ the rank of the correspondent matrix
$\tilde{M}_{d}$ is $r=2d+\ell_{1}-\ell_{2}+1$ while $|M_{r}|=0$,
so the Prony problem for such $\mu_{\ell_{1},\ell_{2}}$ is unsolvable.
 For $d=5$ and $\ell_{1}=2,\;\ell_{2}=8$ the matrix is as follows:
\[
\M_{5}^{\left(2,8\right)}=\begin{bmatrix}0 & 0 & 1 & 0 & 0 & 0\\
0 & 1 & 0 & 0 & 0 & 0\\
1 & 0 & 0 & 0 & 0 & 0\\
0 & 0 & 0 & 0 & 0 & 1\\
0 & 0 & 0 & 0 & 1 & 0
\end{bmatrix}.
\]

\end{example}

\subsection{\label{sub:singularities}Near-singular inversion}

The behavior of the inversion of the Prony mapping near the unsolvability
stratum $\Sigma'$ and near the strata where the rank of $\tilde{M}_{d}$
drops, turns out to be pretty complicated. In particular, in the first
case at least one of the nodes tends to infinity. In the second case,
depending on the way the right-hand side $\mu$ approaches the lower
rank strata, the nodes may remain bounded, or some of them may tend
to infinity. In this section we provide one initial result in this
direction, as well as some examples. A comprehensive description of
the inversion of the Prony mapping near $\Sigma'$ and near the lower
rank strata is important both in theoretical study and in applications
of Prony-like systems, and we plan to provide further results in this
direction separately.
\begin{thm}
\label{thm:near.unsolv}As the right-hand side $\mu\in\CC^{2d}\setminus\Sigma'$
approaches a finite point $\mu_{0}\in\Sigma',$ at least one of the
nodes $x_{1},\dots,x_{d}$ in the solution tends to infinity.\end{thm}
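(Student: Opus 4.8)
The plan is to argue by contradiction, and to run the limiting argument \emph{not} at the level of the signal $g$ (whose amplitudes $a_{j,\ell}$ and number of distinct nodes $s$ may behave wildly as $\mu\to\mu_{0}$, precisely because of collisions), but at the level of the associated Pad\'e approximant $R_{\mu}=\SM(g)=P_{\mu}/Q_{\mu}$: normalizing $Q_{\mu}$ to be \emph{monic} makes boundedness of its coefficients equivalent to boundedness of the nodes, and this is the object one can safely pass to the limit.

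Concretely: suppose for contradiction that there is a sequence $\mu_{n}\to\mu_{0}$ in $\CC^{2d}\setminus\Sigma'$ and a constant $C$ such that all nodes of the $n$-th solution lie in the disc $\{|z|\le C\}$. By \prettyref{thm:prony-solvability} and \prettyref{prop:pade-correspondence}, each such solution corresponds to the unique Pad\'e approximant $R_{\mu_{n}}=P_{\mu_{n}}/Q_{\mu_{n}}$ with $Q_{\mu_{n}}$ monic of degree $r_{n}=\rank\M_{d}(\mu_{n})\le d$, $\deg P_{\mu_{n}}<r_{n}$, and $R_{\mu_{n}}(z)=\sum_{k=0}^{2d-1}m_{k}(\mu_{n})z^{-k-1}+O(z^{-2d-1})$. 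Passing to a subsequence we may assume $r_{n}\equiv r$. Since $Q_{\mu_{n}}$ is monic of fixed degree $r$ with all roots in $\{|z|\le C\}$, its coefficients are bounded, so along a further subsequence $Q_{\mu_{n}}\to Q_{0}$, monic of degree $r$. Next, $P_{\mu_{n}}=Q_{\mu_{n}}R_{\mu_{n}}$ is the polynomial part of $Q_{\mu_{n}}(z)\sum_{k\ge0}m_{k}(\mu_{n})z^{-k-1}$, and only the terms with $k\le r-1\le d-1$ contribute to it; hence the coefficients of $P_{\mu_{n}}$ are fixed polynomial functions of those of $Q_{\mu_{n}}$ and of $m_{0}(\mu_{n}),\dots,m_{r-1}(\mu_{n})$, so $P_{\mu_{n}}\to P_{0}$ with $\deg P_{0}<r$. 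Finally, the Taylor coefficients at infinity of a ratio $P/Q$ with $Q$ monic are polynomial in the coefficients of $P$ and $Q$; letting $n\to\infty$, the first $2d$ Taylor coefficients of $R_{0}:=P_{0}/Q_{0}$ equal $\lim_{n}m_{k}(\mu_{n})=m_{k}(\mu_{0})$, $k=0,\dots,2d-1$. Thus $R_{0}$ solves \prettyref{prob:pade} for input $\mu_{0}$, and by \prettyref{prop:pade-correspondence} the Prony problem is solvable at $\mu_{0}$, contradicting $\mu_{0}\in\Sigma'$.

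I expect the only genuine subtlety to be exactly this choice of normalization and the accompanying bookkeeping: one must resist taking limits of the pair $(w,g)$ directly, since amplitudes can blow up and nodes can merge, and instead recognize the monic denominator $Q_{\mu}$ as the correct bounded object, allowing the limiting fraction $P_{0}/Q_{0}$ to be reducible --- its cancellation is precisely what records the collision or rank drop occurring at $\mu_{0}$. Everything else (the trivial bound $r_{n}\le d$, continuity of the polynomial part and of the Taylor recursion, compactness of monic polynomials with bounded roots) is routine.
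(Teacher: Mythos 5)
Your proof is correct, but it takes a genuinely different route from the paper's. The paper argues directly in the Prony bundle ${\cal P}_{d}$: it invokes \prettyref{thm:f.d.prony.1} to say that the \emph{finite-difference coordinates} $\beta_{j,\ell}$ of the solution stay bounded because the moments do, then uses compactness (bounded nodes plus bounded divided-difference coefficients) to extract a limit $\omega\in{\cal P}_{d}$, and continuity of $\PM$ in the distribution topology (\prettyref{lem:dd-as-our-guys}) to conclude $\PM(\omega)=\mu_{0}$, contradicting $\mu_{0}\in\Sigma'$. You instead transport the whole compactness argument to the Stieltjes/Taylor side: the monic denominator $Q_{\mu_{n}}$ is the bounded object (bounded roots $\Rightarrow$ bounded elementary symmetric functions), $P_{\mu_{n}}$ follows because the polynomial part of $Q_{\mu_{n}}S$ depends polynomially on the data, and the Taylor coefficients of $P/Q$ at infinity are polynomial in $(P,Q)$ for monic $Q$, so they pass to the limit. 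What your version buys is self-containment and economy: it needs only \prettyref{thm:prony-solvability}, \prettyref{prop:pade-correspondence} and elementary polynomial facts, whereas the paper's proof forward-references the divided-difference machinery of Section 6 and, strictly speaking, needs a uniform bound on the $\beta_{j,\ell}$ along the whole sequence rather than the local continuity statement of \prettyref{thm:f.d.prony.1}; your observation that the limit fraction $P_{0}/Q_{0}$ is allowed to be reducible (and still lies in ${\cal S}_{d}$, hence still witnesses solvability at $\mu_{0}$) cleanly absorbs both collisions and rank drops. What the paper's route buys in exchange is a stronger conclusion along the way --- actual convergence of the solutions themselves as distributions, which is the conceptual point of the finite-difference bases --- rather than just convergence of the associated rational functions. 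The one degenerate case both arguments pass over silently is $r=0$, but since $\mu_{0}\in\Sigma'$ forces $\mu_{0}\neq0$, this is vacuous.
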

\begin{proof}
By assumptions, the components $m_{0},\dots,m_{2d-1}$ of the right-hand
side $\mu=(m_{0},\dots,m_{2d-1})\in\CC^{2d}$ remain bounded as $\mu\rightarrow\mu_{0}$.
By \prettyref{thm:f.d.prony.1}, the finite differences coordinates
of the solution $\PM^{-1}(\mu)$ remain bounded as well. Now, if all
the nodes are also bounded, by compactness we conclude that $\PM^{-1}(\mu)\rightarrow\o\in{\cal P}_{d}.$
By continuity in the distribution space (\prettyref{lem:dd-as-our-guys})
we have $\PM(\o)=\mu_{0}$. Hence the Prony problem with the right-hand
side $\mu_{0}$ has a solution $\o\in{\cal P}_{d},$ in contradiction
with the assumption that $\mu_{0}\in\Sigma'$.\end{proof}
\begin{example}
Let us consider an example: $d=2$ and $\mu_{0}=(0,0,1,0)$. Here
the rank $\ell$ of $\tilde{M}_{2}$ is $2$, and $|M_{2}|=0$, so
by \prettyref{thm:prony-solvability} we have $\mu_{0}\in\Sigma'_{2}\subset\Sigma'$.
Consider now a perturbation $\mu(\e)=(0,\e,1,0)$ of $\mu_{0}$. For
$\e\ne0$ we have $\mu(\e)\in\Sigma_{2}\subset\Sigma,$ and the Prony
system is solvable for $\mu_{\e}$. Let us write an explicit solution:
the coefficients $c_{0},c_{1}$ of the polynomial $Q(z)=c_{0}+c_{1}z+z^{2}$
we find from the system \eqref{eq:double-star}:
\[
\begin{bmatrix}0 & \e\\
\e & 1
\end{bmatrix}\begin{bmatrix}c_{0}\\
c_{1}
\end{bmatrix}=\begin{bmatrix}-1\\
0
\end{bmatrix},
\]
whose solution is $c_{1}=-\frac{1}{\e},\ c_{0}=\frac{1}{{\e^{2}}}.$
Hence the denominator $Q(z)$ of $R(z)$ is $Q(z)=\frac{1}{{\e^{2}}}-\frac{1}{\e}z+z^{2}$,
and its roots are $x_{1}=\frac{{1+\imath\sqrt{3}}}{{2\e}},\ x_{2}=\frac{{1-\imath\sqrt{3}}}{{2\e}}$.
The coefficients $b_{0},b_{1}$ of the numerator $P(z)=b_{0}+b_{1}z$
we find from \eqref{eq:star}:
\[
\begin{bmatrix}0 & 0\\
0 & \e
\end{bmatrix}\begin{bmatrix}-\frac{1}{\e}\\
1
\end{bmatrix}=\begin{bmatrix}b_{1}\\
b_{0}
\end{bmatrix},
\]
i.e. $b_{1}=0,\ b_{0}=\e$. Thus the solution of the associated Padé
problem is
\[
R(z)=\frac{{P(z)}}{{Q(z)}}=\frac{\e}{{(z-x_{1})(z-x_{2})}}=\frac{\e^{2}}{\imath\sqrt{3}}\frac{1}{(z-x_{1})}-\frac{\e^{2}}{\imath\sqrt{3}}\frac{1}{(z-x_{2})}.
\]
 Finally, the (unique up to a permutation) solution of the Prony problem
for $\mu_{\e}$ is 
\[
a_{1}=\frac{\e^{2}}{\imath\sqrt{3}},\ a_{2}=-\frac{\e^{2}}{\imath\sqrt{3}},\quad x_{1}=\frac{{1+\imath\sqrt{3}}}{{2\e}},\ x_{2}=\frac{{1-\imath\sqrt{3}}}{{2\e}}.
\]
As $\e$ tends to zero, the nodes $x_{1},x_{2}$ tend to infinity
while the coefficients $a_{1},a_{2}$ tend to zero.
\end{example}
As it was shown above, for a given $\mu\in\Sigma$ (say, with pairwise
different nodes) the rank of the matrix $\tilde{M}_{d}$ is equal
to the number of the nodes in the solution for which the corresponding
$\delta$-function enters with a non-zero coefficients. So $\mu$
approaches a certain $\mu_{0}$ belonging to a stratum of a lower
rank of $\tilde{M}_{d}$ if and only if some of the coefficients $a_{j}$
in the solution tend to zero. We do not analyze all the possible scenarios
of such a degeneration, noticing just that if $\mu_{0}\in\Sigma',$
i.e., the Prony problem is unsolvable for $\mu_{0}$, then \prettyref{thm:near.unsolv}
remains true, with essentially the same proof. So at least one of
the nodes, say, $x_{j},$ escapes to infinity. Moreover, one can show
that $a_{j}x_{j}^{2d-1}$ cannot tend to zero - otherwise the remaining
linear combination of $\delta$-functions would provide a solution
for $\mu_{0}$.

If $\mu_{0}\in\Sigma,$ i.e., the Prony problem is solvable for $\mu_{0},$
all the nodes may remain bounded, or some $x_{j}$ may escape to infinity,
but in such a way that $a_{j}x_{j}^{2d-1}$ tends to zero.

\section{\label{sec:multiplicity-restricted}Multiplicity-restricted Prony
problem}

\global\long\def\rpm{{\cal \PM}_{D_{0}}^{*}}

Consider \prettyref{prob:noisy-prony} at some point $\mu_{0}\in\Sigma$.
By definition, $\mu_{0}\in\Sigma_{r_{0}}$ for some $r_{0}\leq d$.
Let $\mu_{0}=\PM\left(\left(w_{0},g_{0}\right)\right)$ for some $\left(w_{0},g_{0}\right)\in{\cal P}_{d}$.
Assume for a moment that the multiplicity vector $D_{0}=D\left(g_{0}\right)=\left(d_{1},\dots d_{s_{0}}\right)$,
$\sum_{j=1}^{s_{0}}d_{j}=r_{0}$, has a non-trivial collision pattern,
i.e. $d_{j}>1$ for at least one $j=1,\dots,s_{0}$. It means, in
turn, that the function $R_{D_{0},X,A}\left(z\right)$ has a pole
of multiplicity $d_{j}$. Evidently, there exists an arbitrarily small
perturbation $\tilde{\mu}$ of $\mu_{0}$ for which this multiple
pole becomes a cluster of single poles, thereby changing the multiplicity
vector to some $D'\neq D_{0}$. While we address this problem in \prettyref{sec:collision-singularities}
via the bases of divided differences, in this section we consider
a ``multiplicity-restricted'' Prony problem. 
\begin{defn}
Let $\vec{x}=\left(x_{1},\dots,x_{s}\right)\in\CC^{s}$ and $D=\left(d_{1},\dots,d_{s}\right)$
with $d=\sum_{j=1}^{s}d_{j}$ be given. The\emph{ $d\times d$ confluent
Vandermonde} matrix is
\begin{equation}
\cvand=V\left(\vec{x},D\right)=\cvand\left(x_{1},d_{1},\dots,x_{s},d_{s}\right)=\left[\begin{array}{cccc}
\confvec[1][0] & \confvec[2][0] & \dotsc & \confvec[s][0]\\
\confvec[1][1] & \confvec[2][1] & \dotsc & \confvec[s][1]\\
 &  & \dotsc\\
\confvec[1][d-1] & \confvec[2][d-1] & \dotsc & \confvec[s][d-1]
\end{array}\right]\label{eq:confluent-vandermonde-def}
\end{equation}
where the symbol $\confvec$ denotes the following $1\times d_{j}$
row vector
\[
\confvec\isdef\left[\begin{array}{cccc}
x_{j}^{k}, & kx_{j}^{k-1}, & \dots & ,k\left(k-1\right)\cdots\left(k-d_{j}\right)x_{j}^{k-d_{j}+1}\end{array}\right].
\]
\end{defn}
\begin{prop}
The matrix $\cvand$ defines the linear part of the confluent Prony
system \eqref{eq:confluent equation_prony_system} in the standard
basis for $V_{w}$, namely, 
\begin{equation}
\cvand\left(x_{1},d_{1},\dots,x_{s},d_{s}\right)\begin{bmatrix}a_{1,0}\\
\vdots\\
a_{1,d_{1}-1}\\
\vdots\\
\\
a_{s,d_{s}-1}
\end{bmatrix}=\begin{bmatrix}m_{0}\\
m_{1}\\
\vdots\\
\\
\\
m_{d-1}
\end{bmatrix}.\label{eq:confluent-prony-linear-part}
\end{equation}
\end{prop}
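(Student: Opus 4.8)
The plan is a direct verification that simply unwinds the definitions of \prettyref{sec:mappings-def}. Fix $w\in\CC^{d}$ with $T(w)=(x_{1},\dots,x_{s})$ and $D(w)=(d_{1},\dots,d_{s})$, and let $g\in V_{w}$ be arbitrary, written in the standard basis \eqref{eq:standard-basis-representation} as $g=\sum_{j=1}^{s}\sum_{\ell=0}^{d_{j}-1}a_{j,\ell}\,\delta^{(\ell)}(x-x_{j})$. By construction the coordinate vector of $g$ in this basis is precisely the column $(a_{1,0},\dots,a_{1,d_{1}-1},\dots,a_{s,d_{s}-1})^{T}$ appearing on the left of \eqref{eq:confluent-prony-linear-part}, split into $s$ consecutive blocks of lengths $d_{1},\dots,d_{s}$.

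Next I would apply the Prony mapping to $g$ and compute $m_{k}=\int x^{k}g(x)\,\dd x$ for $k=0,1,\dots,d-1$ by linearity, using the elementary moment computation for derivatives of $\delta$-functions already recorded in \prettyref{sec:mappings-def}; this gives exactly the first $d$ equations of the confluent Prony system \eqref{eq:confluent equation_prony_system}, namely
\[
m_{k}=\sum_{j=1}^{s}\sum_{\ell=0}^{d_{j}-1}\frac{k!}{(k-\ell)!}\,a_{j,\ell}\,x_{j}^{k-\ell},\qquad k=0,\dots,d-1,
\]
with the convention that $\frac{k!}{(k-\ell)!}=k(k-1)\cdots(k-\ell+1)$ is read as $0$ when $\ell>k$ (matching the vanishing of the $\ell$-th derivative of $x^{k}$ at that order).

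The last step is the bookkeeping that rewrites this system as the matrix product in \eqref{eq:confluent-prony-linear-part}. For fixed $k$ and $j$, the inner sum $\sum_{\ell=0}^{d_{j}-1}\frac{k!}{(k-\ell)!}a_{j,\ell}x_{j}^{k-\ell}$ is the product of the $1\times d_{j}$ row vector $\confvec$, whose $\ell$-th entry is $\frac{k!}{(k-\ell)!}x_{j}^{k-\ell}$, with the $j$-th block $(a_{j,0},\dots,a_{j,d_{j}-1})^{T}$ of the coordinate column. Concatenating over $j=1,\dots,s$ makes this the $(k+1)$-st row of $\cvand=V(x_{1},d_{1},\dots,x_{s},d_{s})$ from \eqref{eq:confluent-vandermonde-def} acting on the full coordinate column, and letting $k$ range over $0,\dots,d-1$ yields exactly \eqref{eq:confluent-prony-linear-part}.

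There is no genuine obstacle here: the proof is a bona fide unwinding of definitions. The only points that deserve a moment's care are the degenerate entries with $\ell>k$ (which vanish on both sides) and keeping the block partition of the coordinate column aligned with the partition of $\cvand$ into the $s$ column-blocks of widths $d_{1},\dots,d_{s}$; once a sign convention for $\delta^{(\ell)}$ is fixed consistently between the description of $\PM$ and the entries of $\cvand$, the identity is immediate.
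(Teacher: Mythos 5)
Your verification is correct and is exactly the definition-unwinding the paper has in mind: the paper states this proposition without proof, treating it as immediate from the definitions of the moments of $\delta^{(\ell)}(x-x_j)$ and of the row blocks $\confvec$, and your computation supplies precisely that check (including the correct handling of the vanishing entries with $\ell>k$ and the sign convention for $\int x^k\delta^{(\ell)}(x-x_j)\,\dd x$ already fixed in \prettyref{sec:mappings-def}). Nothing further is needed.
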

\begin{defn}
Let $\PM\left(w_{0},g_{0}\right)=\mu_{0}\in\Sigma_{r_{0}}$ with $D\left(g_{0}\right)=D_{0}$
and $s\left(g_{0}\right)=s_{0}$. Let ${\cal P}_{D_{0}}$ denote the
following subbundle of ${\cal P}_{d}$ of dimension $s_{0}+r_{0}$:
\[
{\cal P}_{D_{0}}=\left\{ \left(w,g\right)\in{\cal P}_{d}:\quad D\left(g\right)=D_{0}\right\} .
\]

The multiplicity-restricted Prony mapping $\rpm:{\cal P}_{D_{0}}\to\CC^{s_{0}+r_{0}}$
is the composition 
\[
\rpm=\pi\circ\PM\restriction_{{\cal P}_{D_{0}}},
\]
where $\pi:\CC^{2d}\to\CC^{s_{0}+r_{0}}$ is the projection map on
the first $s_{0}+r_{0}$ coordinates.
\end{defn}
Inverting this $\rpm$ represents the solution of the confluent Prony
system \eqref{eq:confluent equation_prony_system} with fixed structure
$D_{0}$ from the first $k=0,1,\dots,s_{0}+r_{0}-1$ measurements.
\begin{thm}[\cite{batenkov2011accuracy}]
Let $\mu_{0}^{*}=\rpm\left(\left(w_{0},g_{0}\right)\right)\in\CC^{s_{0}+r_{0}}$
with the unperturbed solution $g_{0}=\sum_{j=1}^{s_{0}}\sum_{\ell=0}^{d_{j}-1}a_{j,\ell}\delta^{\left(\ell\right)}\left(x-\tau_{j}\right)$.
In a small neighborhood of $\left(w_{0},g_{0}\right)\in{\cal P}_{D_{0}}$,
the map $\rpm$ is invertible. Consequently, for small enough $\err$,
the multiplicity-restricted Prony problem with input data $\tilde{\mu}^{*}\in\CC^{r_{0}+s_{0}}$
satisfying $\|\tilde{\mu}^{*}-\mu_{0}^{*}\|\leq\err$ has a unique
solution. The error in this solution satisfies
\begin{eqnarray*}
\left|\Delta a_{j,\ell}\right| & \leq & \frac{2}{\ell!}\left(\frac{2}{\delta}\right)^{s_{0}+r_{0}}\left(\frac{1}{2}+\frac{s_{0}+r_{0}}{\delta}\right)^{d_{j}-\ell}\left(1+\frac{\left|a_{j,\ell-1}\right|}{\left|a_{j,d_{j}-1}\right|}\right)\err,\\
\left|\Delta\tau_{j}\right| & \leq & \frac{2}{d_{j}!}\left(\frac{2}{\delta}\right)^{s_{0}+r_{0}}\frac{1}{\left|a_{j,d_{j}-1}\right|}\err,
\end{eqnarray*}
where $\delta\isdef\min_{i\neq j}\left|\tau_{i}-\tau_{j}\right|$
(for consistency we take $a_{j,-1}=0$ in the above formula).\end{thm}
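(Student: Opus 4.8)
The plan is to obtain the first two assertions from the (holomorphic) inverse function theorem and the error bounds from a quantitative refinement of it. Since the source ${\cal P}_{D_0}$ has dimension $s_0+r_0$, equal to that of the target $\CC^{s_0+r_0}$, it suffices to check that the differential of $\rpm$ at $\left(w_0,g_0\right)$ is a linear isomorphism; local invertibility and the existence of a unique solution for any input $\tilde{\mu}^*$ with $\|\tilde{\mu}^*-\mu_0^*\|\leq\err$ and $\err$ small then follow by taking a neighborhood $U$ of $\left(w_0,g_0\right)$ on which $\rpm$ restricts to a biholomorphism onto an open set, shrunk so that this set contains a ball about $\mu_0^*$.

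To compute the differential I would use the local coordinates on ${\cal P}_{D_0}$ given by the $s_0$ node positions $\tau_1,\dots,\tau_{s_0}$ together with the $r_0=\sum_j d_j$ coefficients $a_{j,\ell}$, $0\leq\ell\leq d_j-1$, so that $\rpm$ becomes the polynomial map $\left(\tau,a\right)\mapsto\left(m_0,\dots,m_{s_0+r_0-1}\right)$ with $m_k=\sum_{j=1}^{s_0}\sum_{\ell=0}^{d_j-1}\frac{k!}{\left(k-\ell\right)!}\,a_{j,\ell}\,\tau_j^{k-\ell}$. The Jacobian column with respect to $a_{j,\ell}$ is $\left(\frac{k!}{\left(k-\ell\right)!}\tau_j^{k-\ell}\right)_{k=0}^{s_0+r_0-1}$, i.e.\ exactly the $\ell$-th ``derivative'' column of the confluent Vandermonde block at $\tau_j$ (cf.\ \eqref{eq:confluent-prony-linear-part}), while the column with respect to $\tau_j$ is $\frac{\partial m_k}{\partial\tau_j}=\sum_{\ell=0}^{d_j-1}a_{j,\ell}\frac{k!}{\left(k-\ell-1\right)!}\tau_j^{k-\ell-1}$, which is a linear combination of the columns belonging to $a_{j,1},\dots,a_{j,d_j-1}$ plus $a_{j,d_j-1}$ times the ``new'' column $\left(\frac{k!}{\left(k-d_j\right)!}\tau_j^{k-d_j}\right)_{k}$. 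Subtracting off the columns already present, the $\tau_j$-column reduces to $a_{j,d_j-1}$ times the $d_j$-th derivative column at $\tau_j$.

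Performing these column operations for every $j$ and factoring $a_{j,d_j-1}$ out of each $\tau_j$-column then shows that the Jacobian determinant of $\rpm$ at $\left(w_0,g_0\right)$ equals, up to sign, $\left(\prod_{j=1}^{s_0}a_{j,d_j-1}\right)$ times the determinant of the square confluent Vandermonde matrix $\cvand\left(\tau_1,d_1+1,\dots,\tau_{s_0},d_{s_0}+1\right)$, whose size is $\sum_j\left(d_j+1\right)=s_0+r_0$. This confluent Vandermonde determinant is a nonzero constant (depending only on $D_0$) times $\prod_{i<j}\left(\tau_i-\tau_j\right)^{\left(d_i+1\right)\left(d_j+1\right)}$, hence nonzero because the $\tau_j$ are pairwise distinct, and $\prod_j a_{j,d_j-1}\neq0$ by the normalization $a_{j,d_j-1}\neq0$ built into the definition of the multiplicity vector. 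Therefore the differential is invertible at $\left(w_0,g_0\right)$, which gives the first two assertions.

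For the error bounds one must make the inversion quantitative, and the natural route mirrors the two-factor structure above: first recover the nodes $\tau_j$ — equivalently the coefficients of $\prod_j\left(z-\tau_j\right)^{d_j}$ — from a Hankel-type subsystem formed from $\tilde{\mu}^*$, and then back-substitute into the confluent Vandermonde system \eqref{eq:confluent-prony-linear-part} for the $a_{j,\ell}$. Bounding the norms of the inverses of these structured matrices in terms of $\delta=\min_{i\neq j}\left|\tau_i-\tau_j\right|$ (Gautschi-type estimates for inverses of confluent Vandermonde matrices) produces the factors $\left(2/\delta\right)^{s_0+r_0}$ and $\left(\frac12+\frac{s_0+r_0}{\delta}\right)^{d_j-\ell}$; the factors $1/\left|a_{j,d_j-1}\right|$ and $1+\left|a_{j,\ell-1}\right|/\left|a_{j,d_j-1}\right|$ record the division by the leading coefficient and the stage of the back-substitution, and a Newton/Lipschitz argument converts the linearized estimate into a bound for the genuine nonlinear solution. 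I expect this last, quantitative step to be the main obstacle: qualitative local invertibility falls out immediately from the Jacobian computation, but pinning down the explicit constants with the stated dependence on $\delta$ and $\left|a_{j,d_j-1}\right|$ requires careful control of the inverses of the structured matrices and of the nonlinear remainder, which is precisely what is carried out in \cite{batenkov2011accuracy}.
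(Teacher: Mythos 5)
Your proposal matches the paper's own argument: the paper's proof outline likewise factors the Jacobian of $\rpm$ as the confluent Vandermonde matrix $V\left(\tau_{1},d_{1}+1,\dots,\tau_{s_{0}},d_{s_{0}}+1\right)$ times a block-diagonal matrix whose blocks have determinant $a_{j,d_{j}-1}$, concludes local invertibility from $a_{j,d_{j}-1}\neq0$ and the distinctness of the $\tau_{j}$, and defers the explicit $\delta$-dependent constants to external bounds on inverses of confluent Vandermonde matrices (the paper cites \cite{2012arXiv1212.0172B} and \cite{batenkov2011accuracy} for exactly the quantitative step you flag as the remaining work). No substantive difference in approach.
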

\begin{proof}
[Proof outline]The Jacobian of $\rpm$ can be easily computed, and
it turns out to be equal to the product
\[
{\cal J}_{\rpm}=V\left(\tau_{1},d_{1}+1,\dots,\tau_{s_{0}},d_{s_{0}}+1\right)\diag\left\{ E_{j}\right\} 
\]
where $V$ is the \emph{confluent Vandermonde matrix \eqref{eq:confluent-vandermonde-def}}
on the nodes $\left(\tau_{1},\dots,\tau_{s_{0}}\right)$, with multiplicity
vector
\[
\tilde{D}_{0}=\left(d_{1}+1,\dots,d_{s_{0}}+1\right),
\]
while $E$ is the $\left(d_{j}+1\right)\times\left(d_{j}+1\right)$
block
\[
E_{j}=\begin{bmatrix}1 & 0 & 0 & \cdots & 0\\
0 & 1 & 0 & \cdots & a_{j,0}\\
\vdots & \vdots & \vdots & \ \ddots & \vdots\\
0 & 0 & 0 & \cdots & a_{j,d_{j}-1}
\end{bmatrix}.
\]
Since $\mu_{0}\in\Sigma_{r}$, the highest order coefficients $a_{j,d_{j}-1}$
are nonzero. Furthermore, since all the $\tau_{j}$ are distinct,
the matrix $V$ is nonsingular. Local invertability follows. To estimate
the norm of the inverse, use bounds from \cite{2012arXiv1212.0172B}.\end{proof}
\begin{rem}
Note that as two nodes collide ($\delta\to0$), the inversion of the
multiplicity-restricted Prony mapping $\rpm$ becomes ill-conditioned
proportionally to $\delta^{-\left(s_{0}+r_{0}\right)}$.
\end{rem}
Let us stress that we are not aware of any general method of inverting
$\rpm$, i.e. solving the multiplicity-restricted confluent Prony
problem with the smallest possible number of measurements. As we demonstrate
in \cite{batFullFourier}, such a method exists for a very special
case of a single point, i.e. $s=1$.

\section{\label{sec:rank-restriction}Rank-restricted Prony problem}

Recall that the Prony problem consists in inverting the Prony mapping
$\PM:{\cal P}_{d}\rightarrow{\cal T}_{d}$. So, given $\mu=(m_{0},\dots,m_{2d-1})\in{\cal T}_{d}$
we are looking for $(w,g)\in{\cal P}_{d}$ such that $m_{k}(g)=\int x^{k}g(x)dx=m_{k}$,
with $k=0,1,\dots,2d-1$. If $\mu\in\Sigma_{r}$ with $r<d$, then
in fact any neighborhood of $\mu$ will contain points from the non-solvability
set $\Sigma'$. Indeed, consider the following example.
\begin{example}
\label{ex:unsolvability-for-small-perturbation}Slightly modifying
the construction of \prettyref{ex:nonsolv-ex2}, consider $\mu_{\ell_{1},\ell_{2},\e}\in\CC^{2d}$
with all the $m_{k}=0$ besides $m_{\ell_{1}}=1$ and $m_{\ell_{2}}=\e$,
such that $\ell_{2}>\ell_{1}+d-1$. For example, if $d=5$ and $\ell_{1}=2,\;\ell_{2}=8$,
the corresponding matrix is
\[
\M_{5}^{\left(2,8,\e\right)}=\begin{bmatrix}0 & 0 & 1 & 0 & 0 & 0\\
0 & 1 & 0 & 0 & 0 & 0\\
1 & 0 & 0 & 0 & 0 & 0\\
0 & 0 & 0 & 0 & 0 & \e\\
0 & 0 & 0 & 0 & \e & 0
\end{bmatrix}.
\]
For $\e=0$ the Prony problem is solvable, while for any small perturbation
$\e\neq0$ it becomes unsolvable. However, if we restrict the whole
problem just to $d=3$, it remains solvable for any small perturbation
of the input.
\end{example}
\global\long\def\rrpm{{\cal \PM}_{r}^{*}}

We therefore propose to consider the \emph{rank-restricted Prony problem}
analogous to the construction of \prettyref{sec:multiplicity-restricted},
but instead of fixing the multiplicity $D\left(g\right)$ we now fix
the rank $r$ (recall \prettyref{def:mult-vec-signal}).
\begin{defn}
Denote by ${\cal P}_{r}$ the following vector bundle:
\[
{\cal P}_{r}=\left\{ \left(w,g\right):\quad w\in\CC^{r},\; g\in V_{w}\right\} ,
\]
where $V_{w}$ is defined exactly as in \prettyref{def:bundles},
replacing $d$ with $r$.
\end{defn}
Likewise, we define the Stieltjes bundle of order $r$ as follows.
\begin{defn}
Denote by ${\cal S}_{r}$ the following vector bundle:
\[
{\cal S}_{r}=\left\{ \left(w,\g\right):\qquad w\in\CC^{r},\;\g\in W_{w}\right\} ,
\]
where $W_{w}$ is defined exactly as in \prettyref{def:bundles},
replacing $d$ with $r$.
\end{defn}
The Stieltjes mapping acts naturally as a map $\SM:{\cal P}_{r}\to{\cal S}_{r}$
with exactly the same definition as \prettyref{def:stieltjes-mapping}.

The restricted Taylor mapping $\TM_{r}:{\cal S}_{r}\to\CC^{2r}$ is,
as before, given by the truncated development at infinity to the first
$2r$ Taylor coefficients.
\begin{defn}
Let $\pi:\CC^{2d}\to\CC^{2r}$ denote the projection operator onto
the first $2r$ coordinates. Denote $\Sigma_{r}^{*}\isdef\pi\left(\Sigma_{r}\right)$.
The rank-restricted Prony mapping $\rrpm:{\cal P}_{r}\to\Sigma_{r}^{*}$
is given by by
\[
\rrpm\left(\left(w,g\right)\right)=\left(m_{0},\dots,m_{2r-1}\right),\qquad m_{k}=m_{k}\left(g\right)=\int x^{k}g\left(x\right)\dd x.
\]
\end{defn}
\begin{rem}
${\cal P}_{r}$ can be embedded in ${\cal P}_{d}$, for example by
the map $\Xi_{r}:{\cal P}_{r}\to{\cal P}_{d}$ 
\[
\Xi_{r}:\;\left(w,g\right)\in{\cal P}_{r}\longmapsto\left(w',g'\right)\in{\cal P}_{d}:\qquad w'=\left(x_{1},\dots,x_{r},\underbrace{0,\dots0}_{\times\left(d-r\right)}\right),\; g'=g.
\]
With this definition,  $\rrpm$ can be represented also as the composition
\[
\rrpm=\pi\circ\PM\circ\Xi_{r}.
\]
\end{rem}
\begin{prop}
The rank-restricted Prony mapping satisfies
\[
\rrpm=\TM_{r}\circ\SM.
\]

\end{prop}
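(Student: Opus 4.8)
The plan is to unwind the definitions exactly as in the factorization $\PM=\TM\circ\SM$ established in \eqref{eq:prony-mapping-factor}, now in the rank-$r$ setting. Fix $(w,g)\in{\cal P}_{r}$ with $w\in\CC^{r}$ and $g=\sum_{j=1}^{s}\sum_{\ell=0}^{d_{j}-1}a_{j,\ell}\delta^{(\ell)}(x-\tau_{j})\in V_{w}$. First I would compute $\SM((w,g))=(w,\gamma)$, where $\gamma(z)=\int_{-\infty}^{\infty}\frac{g(x)\dd x}{z-x}$. Expanding the kernel $\frac{1}{z-x}=\sum_{k=0}^{\infty}x^{k}\left(\frac{1}{z}\right)^{k+1}$ as a geometric series at infinity and integrating term by term against $g$ --- precisely the computation recalled at the start of \prettyref{sec:mappings-def} --- yields $\gamma(z)=\sum_{k=0}^{\infty}m_{k}(g)\left(\frac{1}{z}\right)^{k+1}$ with $m_{k}(g)=\int x^{k}g(x)\dd x$. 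Applying $\TM_{r}$, which by definition truncates this development at infinity to its first $2r$ coefficients, gives $\TM_{r}(\gamma)=(m_{0}(g),\dots,m_{2r-1}(g))$, and this tuple is precisely $\rrpm((w,g))$ by the definition of the rank-restricted Prony mapping. Hence $\rrpm=\TM_{r}\circ\SM$.

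A slightly slicker route, reusing machinery already in place, is to invoke the embedding $\Xi_{r}:{\cal P}_{r}\hookrightarrow{\cal P}_{d}$ together with the identity $\rrpm=\pi\circ\PM\circ\Xi_{r}$ from the preceding remark. Using $\PM=\TM\circ\SM$ one gets $\rrpm=\pi\circ\TM\circ\SM\circ\Xi_{r}$. The two observations needed are: (i) $\SM$ commutes with the embedding, because $\Xi_{r}$ leaves $g$ unchanged and merely pads the node vector with extra zeros, so the Stieltjes transform of $g'=g$ is the same rational function $\gamma$, now regarded as an element of $W_{w'}$; and (ii) applying $\pi$ to $\TM(\gamma)\in\CC^{2d}$ (i.e. taking the first $2r$ entries) gives the same result as applying $\TM_{r}$ directly to $\gamma$, since both are just the first $2r$ Taylor-at-infinity coefficients of the same series. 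Combining (i) and (ii) gives $\rrpm=\TM_{r}\circ\SM$.

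There is essentially no obstacle: the statement is a bookkeeping identity, and its only substantive ingredient --- that the polynomial moments $m_{k}(g)$ coincide with the Taylor-at-infinity coefficients of the Stieltjes transform of $g$ --- is already in hand from \prettyref{sec:mappings-def}. The one point worth an explicit sentence is that $\SM$, $\TM_{r}$ and $\rrpm$ are all defined fibrewise over the common base $\CC^{r}$ and act as the identity on the $w$-component, so it suffices to verify the identity on each fibre $V_{w}$ for fixed $w$, which is exactly what the computation above does.
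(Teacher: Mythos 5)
Your proof is correct and follows the same route the paper intends: the proposition is stated there as an immediate consequence of the definitions, namely the same moment/Taylor-coefficient identification that underlies $\PM=\TM\circ\SM$ in \eqref{eq:prony-mapping-factor}, which you simply carry out explicitly in the rank-$r$ setting (and your alternative via $\Xi_{r}$ is an equally valid repackaging of the same bookkeeping).
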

Inverting $\rrpm$ represents the solution of the rank-restricted
Prony problem. Unlike in the multiplicity-restricted setting of \prettyref{sec:multiplicity-restricted},
here we allow two or more nodes to collide (thereby changing the multiplicty
vector $D\left(g\right)$ of the solution).

The basic fact which makes this formulation useful is the following
result.
\begin{thm}
\label{thm:rank-restriction-is-good}Let $\mu_{0}^{*}\in\Sigma_{r}^{*}$.
Then in a small neighborhood of $\mu_{0}^{*}\in\CC^{2r}$, the Taylor
mapping $\TM_{r}$ is continuously invertible.\end{thm}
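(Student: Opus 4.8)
The plan is to identify $\Sigma_{r}^{*}$ explicitly, reduce everything to the order-$r$ Pad\'e theory already developed (\prettyref{thm:pade-solvability}, \prettyref{thm:prony-solvability} and \prettyref{prop:pade-solution-unique}, all read with $d$ replaced by $r$), and then exhibit the inverse of $\TM_{r}$ by Cramer's rule, which automatically gives holomorphic — hence continuous — dependence on the data.

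First I would observe that, because the minor $M_{r}$ involves only $m_{0},\dots,m_{2r-1}$, the condition $|M_{r}|\neq0$ is a well-defined open condition on $\mu^{*}\in\CC^{2r}$, and I would prove
\[
\Sigma_{r}^{*}=\bigl\{\mu^{*}\in\CC^{2r}\;:\;|M_{r}(\mu^{*})|\neq0\bigr\}.
\]
One inclusion is immediate from the definition of $\Sigma_{r}$; for the other, if $|M_{r}(\mu^{*})|\neq0$ then $\M_{r}$ has full row rank $r$, so by \prettyref{thm:prony-solvability} (with $d\to r$) the order-$r$ problem is solvable with a solution of rank exactly $r$ (by \eqref{eq:sm-rank-equals-degree-denominator}), and extending its moments to all orders produces a $\mu\in\CC^{2d}$ with $\rank\M_{d}=r$ and $|M_{r}|\neq0$, i.e. $\mu\in\Sigma_{r}$ and $\mu^{*}=\pi(\mu)\in\Sigma_{r}^{*}$. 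In particular $|M_{r}|$ is nonzero on a whole neighborhood $V$ of $\mu_{0}^{*}$.

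Next, for $\mu^{*}\in V$ I would write the order-$r$ Pad\'e solution explicitly. Putting $Q(z)=z^{r}+q_{r-1}z^{r-1}+\dots+q_{0}$ and matching the coefficients of $z^{-1},\dots,z^{-r}$ in $P=\g Q$ gives the Hankel system $M_{r}(\mu^{*})\,q=-(m_{r},\dots,m_{2r-1})$ for the vector $q=(q_{0},\dots,q_{r-1})$, whose matrix is invertible on $V$, while the numerator coefficients $p_{0},\dots,p_{r-1}$ are polynomial expressions in the $q$'s and the $m_{k}$'s. Hence $\mu^{*}\mapsto(Q_{\mu^{*}},P_{\mu^{*}})$, and with it $\mu^{*}\mapsto\g_{\mu^{*}}=P_{\mu^{*}}/Q_{\mu^{*}}$, is holomorphic on $V$ (rational, with non-vanishing denominator $|M_{r}|$), and by construction the first $2r$ Taylor coefficients at infinity of $\g_{\mu^{*}}$ are $\mu^{*}$. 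Coprimality of $P$ and $Q$ is an open condition which holds at $\mu_{0}^{*}$ (the solution there has rank $r$), so on a possibly smaller neighborhood $\g_{\mu^{*}}$ has pole set of total multiplicity exactly $r$; taking these $r$ poles (with multiplicities) as a node vector $w_{\mu^{*}}$ gives a point of the Stieltjes bundle ${\cal S}_{r}$ (\prettyref{def:bundles}, $d\to r$) with $\TM_{r}(w_{\mu^{*}},\g_{\mu^{*}})=\mu^{*}$. By uniqueness of the Pad\'e solution (\prettyref{prop:pade-solution-unique}) this is the only preimage in ${\cal S}_{r}$ up to relabelling the nodes, so $\TM_{r}$ is invertible near $\mu_{0}^{*}$ with holomorphic, in particular continuous, inverse at the level of the recovered rational function and its pole data.

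The single delicate step — and the one I expect to cost the most care — is the last lifting: passing continuously from $Q_{\mu^{*}}$ to an \emph{ordered} node vector $w_{\mu^{*}}\in\CC^{r}$. When the solution at $\mu_{0}^{*}$ has pairwise distinct nodes this is routine (simple roots of $Q_{\mu^{*}}$ admit a continuous local labelling; in fact the Jacobian of $\TM_{r}$ in the coordinates of \prettyref{def:bundles} is then nonsingular, giving a holomorphic local section), and $\TM_{r}$ is a genuine local homeomorphism. When the solution has colliding nodes the node \emph{multiset} still varies continuously, but an ordered labelling cannot be followed continuously across the collision locus; the honest statement is then the one above — continuity holds for $\g_{\mu^{*}}$, equivalently for $(P_{\mu^{*}},Q_{\mu^{*}})$, equivalently for the point of ${\cal S}_{r}$ modulo the permutation action on the nodes, which is precisely the ``up to a permutation of the nodes'' proviso used throughout the paper. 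Everything else reduces to the Cramer's-rule bookkeeping indicated above.
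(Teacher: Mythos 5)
Your proposal is correct and follows essentially the same route as the paper: both arguments rest on the observation that $|M_{r}|\neq0$ is an open condition on $\CC^{2r}$, so the linear systems \eqref{eq:double-star} and \eqref{eq:star} produce $Q$ and $P$ continuously (indeed holomorphically, as your Cramer's-rule formulation makes explicit), and persistence of full rank forces $P$ and $Q$ to stay coprime, giving continuity of $R=P/Q$ in the topology of rational functions. Your additional checks — the identification $\Sigma_{r}^{*}=\{|M_{r}|\neq0\}$ and the caveat that an ordered node vector can only be followed continuously up to permutation across collisions — are points the paper leaves implicit, but they do not change the argument.
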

\begin{proof}
This is a direct consequence of the solution method to the Padé approximation
problem described in \prettyref{app:proof-pade-solvability}. Indeed,
if the rank of $\M_{r}$ is full, then it remains so in a small neighborhood
of the \emph{entire space $\CC^{2r}$. }Therefore, the system \eqref{eq:double-star}
remains continuously invertible, producing the coefficients of the
denominator $Q\left(z\right)$. Consequently, the right-hand side
of \eqref{eq:star} depends continuously on the moment vector $\mu^{*}=\left(m_{0},\dots,m_{2r-1}\right)\in\CC^{2r}$.
Again, since the rank always remains full, the polynomials $P\left(z\right)$
and $Q\left(z\right)$ cannot have common roots, and thereby the solution
$R=\frac{P}{Q}=\TM_{r}^{-1}\left(\mu^{*}\right)$ depends continuously
on $\mu^{*}$ (in the topology of the space of rational functions).
\end{proof}
In the next section, we consider the remaining problem: how to invert
$\SM$ in this setting.

\section{\label{sec:collision-singularities}Collision singularities and bases
of finite differences}

\global\long\def\DD{\mathord{\kern0.43em  \vrule width.6ptheight5.6ptdepth-.28pt\kern-0.43em  \Delta}}

\subsection{Introduction}

Collision singularities occur in Prony systems as some of the nodes
$x_{i}$ in the signal $F(x)=\sum_{i=1}^{d}a_{i}\delta(x-x_{i})$
approach one another. This happens for $\mu$ near the discriminant
stratum $\Delta\subset\CC^{2d}$ consisting of those $(m_{0},\dots,m_{2d-1})$
for which some of the coordinates $\left\{ x_{j}\right\} $ in the
solution collide, i.e. the function $R_{D,X,A}\left(z\right)$ has
multiple poles (or, nontrivial multiplicity vector $D$). As we shall
see below, typically, as $\mu$ approaches $\mu_{0}\in\Delta$, i.e.
some of the nodes $x_{i}$ collide, the corresponding coefficients
$a_{i}$ tend to infinity. Notice, that all the moments $m_{k}=m_{k}(F)$
remain bounded. This behavior creates serious difficulties in solving
``near-colliding'' Prony systems, both in theoretical and practical
settings. Especially demanding problems arise in the presence of noise.
The problem of improvement of resolution in reconstruction of colliding
nodes from noisy measurements appears in a wide range of applications.
It is usually called a ``super-resolution problem'' and a lot of
recent publications are devoted to its investigation in various mathematical
and applied settings. See \cite{candes2012towards} and references
therein for a very partial sample.

Here we continue our study of collision singularities in Prony systems,
started in \cite{yom2009Singularities}. Our approach uses bases of
finite differences in the Prony space ${\cal P}_{r}$ in order to
``resolve'' the linear part of collision singularities. In these
bases the coefficients do not blow up any more, even as some of the
nodes collide.
\begin{example}
\label{ex:colliding}Let $r=2$, and consider the signal $F=a_{1}\delta\left(x-x_{1}\right)+a_{2}\delta\left(x-x_{2}\right)$
with 
\begin{eqnarray*}
x_{1} & = & t,\; x_{2}=t+\epsilon,\\
a_{1} & = & -\epsilon^{-1},\; a_{2}=\epsilon^{-1}.
\end{eqnarray*}
The corresponding Prony system is
\[
\left(a_{1}x_{1}^{k}+a_{2}x_{2}^{k}=\right)m_{k}=kt^{k-1}+\underbrace{\sum_{j=2}^{k}{k \choose j}t^{k-j}\e^{j-1}}_{\isdef\rho_{k}\left(t,\e\right)},\qquad k=0,1,2,3.
\]
As $\epsilon\to0$, the Prony system as above becomes ill-conditioned
and the coefficients $\left\{ a_{j}\right\} $ blow up, while the
measurements remain bounded. Note that
\[
\M_{2}=\begin{bmatrix}0 & 1 & 2t+\rho_{2}\left(t,\e\right)\\
1 & 2t+\rho_{2}\left(t,\e\right) & 3t^{2}+\rho_{3}\left(t,\e\right)
\end{bmatrix},
\]
therefore $\rank\M_{2}=2$ and $\left|M_{2}\right|=1\neq0$, i.e.
the Prony problem with input $\left(m_{0},\dots,m_{3}\right)$ remains
solvable for all $\e$. However, the standard basis $\left\{ \delta\left(x-x_{1}\right),\;\delta\left(x-x_{2}\right)\right\} $
degenerates, and in the limit it is no more a basis. If we represent
the solution
\[
F_{\e}\left(x\right)=-\frac{1}{\e}\delta\left(x-t\right)+\frac{1}{\e}\delta\left(x-t-\e\right)
\]
in the basis
\begin{eqnarray*}
\Delta_{1}\left(x_{1},x_{2}\right) & = & \delta\left(x-x_{1}\right),\\
\Delta_{2}\left(x_{1},x_{2}\right) & = & \frac{1}{x_{1}-x_{2}}\delta\left(x-x_{1}\right)+\frac{1}{x_{2}-x_{1}}\delta\left(x-x_{2}\right),
\end{eqnarray*}
then we have
\[
F_{\e}\left(x\right)=1\cdot\Delta_{2}\left(t,t+\e\right),
\]
i.e. the coefficients in this new basis are just $\left\{ b_{1}=0,\; b_{1}=1\right\} $.
As $\e\to0$, in fact we have
\[
\Delta_{2}\left(t,t+\e\right)\to\delta'\left(x-t\right),
\]
where the convergence is in the topology of the bundle ${\cal P}_{r}$.
\end{example}

Our goal in this section is to generalize the construction of \prettyref{ex:colliding}
and \cite{yom2009Singularities} to handle the general case of colliding
configurations.

\subsection{Divided finite differences}

For modern treatment of divided differences, see e.g. \cite{de2005divided,kahan1999symbolic,milne1933calculus}.
We follow \cite{de2005divided} and adopt what has become by now the
standard definition. 
\begin{defn}
Let an arbitrary sequence of points $w=\left(x_{1},x_{2},\dots,\right)$
be given (repetitions are allowed). The \emph{(n-1})-st \emph{divided
difference }$\DD^{n-1}\left(w\right):\Pi\to\CC$ is the linear functional
on the space $\Pi$ of polynomials, associating to each $p\in\Pi$
its (uniquely defined) $n$-th coefficient in the Newton form \emph{
\begin{equation}
p\left(x\right)=\sum_{j=1}^{\infty}\left\{ \DD^{j-1}\left(x_{1},\dots,x_{j}\right)p\right\} \cdot q_{j-1,w}\left(x\right),\qquad q_{i,w}\left(x\right)\isdef\prod_{j=1}^{i}\left(x-x_{j}\right).\label{eq:newton-expansion}
\end{equation}
}
\end{defn}
It turns out that this definition can be extended to all sufficiently
smooth functions for which the interpolation problem is well-defined.
\begin{defn}[\cite{de2005divided}]
For any smooth enough function $f$, defined at least on $x_{1},\dots,x_{n}$,
the divided finite difference $\DD^{n-1}\left(x_{1},\dots,x_{n}\right)f$
is the $n$-th coefficient in the Newton form \eqref{eq:newton-expansion}
of the Hermite interpolation polynomial $P_{n}$, which agrees with
$f$ and its derivatives of appropriate order on $x_{1},\dots,x_{n}:$
\begin{equation}
f^{\left(\ell\right)}\left(x_{j}\right)=P_{n}^{\left(\ell\right)}\left(x_{j}\right):\qquad1\leqslant j\leqslant n,\;0\leqslant\ell<d_{j}\isdef\#\left\{ i:\quad x_{i}=x_{j}\right\} .\label{eq:hermite-conditions-dd}
\end{equation}

\end{defn}
Therefore, each divided difference can be naturally associated with
an element of the Prony space (see \prettyref{enu:dd-as-functionals}
in \prettyref{prop:dd-properties} and \prettyref{def:dd-as-elements-of-vw}
below for an accurate statement).

Let us now summarize relevant properties of the functional $\DD$
which we shall use later on.
\begin{prop}
\label{prop:dd-properties}For $w=\left(x_{1},\dots,x_{n}\right)\in\CC^{n}$,
let $s\left(w\right),\; T\left(w\right)$ and $D\left(w\right)$ be
defined according to \prettyref{def:config}. Let $q_{n,w}\left(z\right)=\prod_{j=1}^{s}\left(z-\tau_{j}\right)^{d_{j}}$
be defined as in \eqref{eq:newton-expansion}.
\begin{enumerate}
\item \label{enu:dd-ordering}The functional $\DD^{n-1}\left(x_{1},\dots,x_{n}\right)$
is a symmetric function of its arguments, i.e. it depends only on
the set $\left\{ x_{1},\dots,x_{n}\right\} $ but not on its ordering.
\item $\DD^{n-1}\left(x_{1},\dots,x_{n}\right)$ is a continuous function
of the vector $\left(x_{1},\dots,x_{n}\right)$. In particular, for
any test function $f$
\[
\lim_{\left(x_{1},\dots,x_{n}\right)\to\left(t_{1},\dots,t_{n}\right)}\DD^{n-1}\left(x_{1},\dots,x_{n}\right)f=\DD^{n-1}\left(t_{1},\dots,t_{n}\right)f.
\]

\item $\DD$ may be computed by the recursive rule
\begin{equation}
\DD^{n-1}\left(x_{1},\dots,x_{n}\right)f=\begin{cases}
\frac{\DD^{n-2}\left(x_{2},\dots,x_{n}\right)f-\DD^{n-2}\left(x_{1},\dots,x_{n-1}\right)f}{x_{n}-x_{1}} & x_{1}\neq x_{n},\\
\left\{ \frac{\dd}{\dd\xi}\DD^{n-2}\left(\xi,x_{2},\dots,x_{n-1}\right)f\right\} |_{\xi=x_{n}}, & x_{1}=x_{n},
\end{cases}\label{eq:dd-recursive-rule}
\end{equation}
where $\DD^{0}\left(x_{1}\right)f=f\left(x_{1}\right).$
\item Let $f_{z}\left(x\right)=\left(z-x\right)^{-1}$. Then for all $z\notin\left\{ x_{1},\dots,x_{n}\right\} $
\begin{equation}
\DD^{n-1}\left(x_{1},\dots,x_{n}\right)f_{z}=\frac{1}{q_{n,w}\left(z\right)}.\label{eq:dd-of-fraction}
\end{equation}

\item \label{enu:dd-as-functionals}By \eqref{eq:hermite-conditions-dd},
$\DD^{n-1}\left(x_{1},\dots,x_{n}\right)$ is a linear combination
of the functionals
\[
\delta^{\left(\ell\right)}\left(x-\tau_{j}\right),\qquad1\leqslant j\leqslant s,\;0\leqslant\ell<d_{j}.
\]
In fact, using \eqref{eq:dd-of-fraction} we obtain the Chakalov's
expansion (see \cite{de2005divided})
\begin{equation}
\DD^{n-1}\left(x_{1},\dots,x_{n}\right)=\sum_{j=1}^{s}\sum_{\ell=0}^{d_{j}-1}a_{j,\ell}\delta^{\left(\ell\right)}\left(x-\tau_{j}\right),\label{eq:chakalov}
\end{equation}
where the coefficients $\left\{ a_{j,\ell}\right\} $ are defined
by the partial fraction decomposition%
\footnote{The coefficients $\left\{ a_{j,\ell}\right\} $ may be readily obtained
by the Cauchy residue formula
\[
a_{j,\ell}=\frac{1}{\left(d_{j}-1-\ell\right)!}\lim_{z\to\tau_{j}}\left(\frac{\dd}{\dd z}\right)^{d_{j}-1-\ell}\left\{ \frac{\left(z-\tau_{j}\right)^{\ell+1}}{q_{n,w}\left(z\right)}\right\} .
\]
}
\begin{equation}
\frac{1}{q_{n,w}\left(z\right)}=\sum_{j=1}^{s}\sum_{\ell=0}^{d_{j}-1}\frac{\ell!a_{j,\ell}}{\left(z-\tau_{j}\right)^{\ell+1}}.\label{eq:chakalov-coeffs}
\end{equation}

\item By \eqref{eq:chakalov} and \eqref{eq:chakalov-coeffs}
\begin{equation}
\DD^{n-1}\left(\underbrace{t,\dots,t}_{\times n}\right)=\frac{1}{\left(n-1\right)!}\delta^{\left(n-1\right)}\left(x-t\right).\label{eq:dd-same-nodes-derivative}
\end{equation}

\item Popoviciu's refinement lemma \cite[Proposition 23]{de2005divided}:
for every index subsequence
\[
1\leqslant\sigma\left(1\right)<\sigma\left(2\right)<\dots<\sigma\left(k\right)\leqslant n,
\]
there exist coefficients $\alpha\left(j\right)$ such that
\begin{equation}
\DD^{k-1}\left(x_{\sigma\left(1\right)},\dots,x_{\sigma\left(k\right)}\right)=\sum_{j=\sigma\left(1\right)-1}^{\sigma\left(k\right)-k}\alpha\left(j\right)\DD^{k-1}\left(x_{j+1},x_{j+2},\dots,x_{j+k}\right).\label{eq:refinement}
\end{equation}

\end{enumerate}
\end{prop}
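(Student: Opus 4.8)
The plan is to order the seven items so that the classical ones are cited and the rest are deduced from the generating-function identity in item~(4). Items~(1), (3) and~(7) are standard properties of divided differences and I would simply invoke \cite{de2005divided}; for~(1) one may also observe that, by \eqref{eq:newton-expansion}, $\DD^{n-1}(x_1,\dots,x_n)p$ is the leading coefficient of the Hermite interpolant $P_n$ to $p$ on $x_1,\dots,x_n$, and $P_n$---hence its leading coefficient---depends only on the multiset of nodes together with their multiplicities, not on the ordering. For continuity (item~2) the cleanest route is the Hermite--Genocchi formula (see \cite{de2005divided,milne1933calculus}), which writes $\DD^{n-1}(x_1,\dots,x_n)f$ as an integral of $f^{(n-1)}$ over a compact simplex, evaluated at the barycentric combination $t_1x_1+\dots+t_nx_n$ of the nodes; the integrand is jointly continuous in the nodes and in the simplex variable, so dominated convergence gives continuity in $(x_1,\dots,x_n)$, confluences included. (One may instead propagate continuity through the recursion \eqref{eq:dd-recursive-rule}, reading the $x_1=x_n$ branch as the limit of the $x_1\ne x_n$ one.)

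The one genuine computation is item~(4). I would prove $\DD^{j-1}(x_1,\dots,x_j)f_z=1/q_{j,w}(z)$ by induction on $j$ via \eqref{eq:dd-recursive-rule}: the base case $j=1$ is $f_z(x_1)=(z-x_1)^{-1}=1/q_{1,w}(z)$, and for the inductive step with $x_1\ne x_j$ the recursion and the inductive hypothesis give
\[
\DD^{j-1}(x_1,\dots,x_j)f_z=\frac{1}{x_j-x_1}\left(\frac{1}{\prod_{i=2}^{j}(z-x_i)}-\frac{1}{\prod_{i=1}^{j-1}(z-x_i)}\right)=\frac{1}{x_j-x_1}\cdot\frac{(z-x_1)-(z-x_j)}{\prod_{i=1}^{j}(z-x_i)}=\frac{1}{q_{j,w}(z)};
\]
the confluent case $x_1=x_j$ then follows from the continuity established in item~(2) (or directly from the second branch of \eqref{eq:dd-recursive-rule}, differentiating $1/q$ with respect to a repeated node).

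Items~(5) and~(6) are read off from~(4). By the Hermite conditions \eqref{eq:hermite-conditions-dd}, $\DD^{n-1}(x_1,\dots,x_n)f$ depends on $f$ only through the finitely many numbers $f^{(\ell)}(\tau_j)$ with $0\le\ell<d_j$, so $\DD^{n-1}(x_1,\dots,x_n)=\sum_{j,\ell}a_{j,\ell}\delta^{(\ell)}(x-\tau_j)$ for suitable constants, i.e. it lies in $V_w$; evaluating both sides on $f_z$ and using~(4) gives
\[
\frac{1}{q_{n,w}(z)}=\sum_{j=1}^{s}\sum_{\ell=0}^{d_j-1}a_{j,\ell}\,f_z^{(\ell)}(\tau_j)=\sum_{j=1}^{s}\sum_{\ell=0}^{d_j-1}\frac{\ell!\,a_{j,\ell}}{(z-\tau_j)^{\ell+1}},
\]
which is exactly \eqref{eq:chakalov-coeffs}, so the $a_{j,\ell}$ are pinned down by uniqueness of partial fractions (the footnote's residue formula being the usual Cauchy expression for them), and~(6) is the special case $s=1$, $\tau_1=t$, $d_1=n$, where $1/q_{n,w}(z)=(z-t)^{-n}$ forces $a_{1,n-1}=1/(n-1)!$ and $a_{1,\ell}=0$ for $\ell<n-1$. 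I do not expect a serious obstacle here: the content is either classical (1, 3, 7), a one-line telescoping induction (4), or routine bookkeeping (2, 5, 6). What requires care is making the confluent cases interlock cleanly between the limiting argument of~(2) and the induction of~(4), and being scrupulous with the factorial normalization in the pairing of $\delta^{(\ell)}(x-\tau)$ against $f_z$ so that \eqref{eq:chakalov} and \eqref{eq:chakalov-coeffs} emerge with exactly the stated constants.
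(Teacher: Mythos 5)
Your proposal is correct, but it is worth noting that the paper does not actually prove this proposition at all: it is presented as a summary of known facts about divided differences, with the individual items attributed to the literature (de Boor's survey, Popoviciu's lemma, Chakalov's expansion). So your self-contained derivation is strictly more than what the paper offers, while following exactly the logical dependencies the statement itself indicates (item (4) by telescoping induction through the recursion \eqref{eq:dd-recursive-rule}; item (5) by pairing the Hermite-interpolation representation against $f_{z}$ and matching with \eqref{eq:dd-of-fraction}; item (6) as the one-cluster specialization of \eqref{eq:chakalov-coeffs}). Your telescoping computation and the factorial bookkeeping $f_{z}^{(\ell)}(\tau_{j})=\ell!\left(z-\tau_{j}\right)^{-\ell-1}$ are right and reproduce \eqref{eq:chakalov-coeffs} with the stated constants. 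The only caution concerns item (2): the Hermite--Genocchi simplex formula as usually stated is a real-variable identity requiring $f\in C^{n-1}$, whereas here the nodes range over $\CC^{n}$; one should either restrict to the holomorphic/polynomial test functions actually used in the paper (for which the formula holds verbatim, with $f^{(n-1)}$ evaluated at the complex barycentric combination and integrated over the real simplex), or argue continuity directly on the coefficients of the Newton form \eqref{eq:newton-expansion}, which are polynomial expressions in the nodes divided by Vandermonde-type determinants and extend continuously through confluences. With that caveat made explicit, the argument is complete.
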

Based on the above, we may now identify $\DD$ with elements of the
bundle ${\cal P}_{r}$.
\begin{defn}
\label{def:dd-as-elements-of-vw}Let $w=\left(x_{1},\dots,x_{r}\right)\in\CC^{r}$,
and $X=\left\{ n_{1},n_{2},\dots,n_{\alpha}\right\} \subseteq\left\{ 1,2,\dots,r\right\} $
of size $\left|X\right|=\alpha$ be given. Let the elements of $X$
be enumerated in increasing order, i.e.
\[
1\leqslant n_{1}<n_{2}<\dots<n_{\alpha}\leqslant r.
\]
Denote by $w_{X}$ the vector
\[
w_{X}\isdef\left(x_{n_{1}},x_{n_{2}},\dots,x_{n_{\alpha}}\right)\in\CC^{\alpha}.
\]
Then we denote 
\[
\Delta_{X}\left(w\right)\isdef\DD^{\alpha-1}\left(w_{X}\right).
\]

\end{defn}
We immediately obtain the following result.
\begin{lem}
\label{lem:dd-as-our-guys}For all $w\in\CC^{r}$ and $X\subseteq\left\{ 1,2,\dots,r\right\} $,
we have $\Delta_{X}\left(w\right)\in V_{w}$. Moreover, letting $\alpha=\left|X\right|$
we have
\begin{equation}
\SM\left(\Delta_{X}\left(w\right)\right)=\DD^{\alpha-1}\left(w_{X}\right)\frac{1}{z-x}=\frac{1}{q_{\alpha,w_{X}}\left(z\right)}.\label{eq:sm-dd}
\end{equation}
Finally, $\left(w,\Delta_{X}\left(w\right)\right)$ is a continuous
section of ${\cal P}_{r}$.
\end{lem}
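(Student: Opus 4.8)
The plan is to verify the three assertions in turn; each is a quick consequence of facts already collected in \prettyref{prop:dd-properties}, so I do not expect a genuine obstacle, only some care with multiplicities in the first part.

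\emph{Membership $\Delta_X(w)\in V_w$.} Apply Chakalov's expansion \eqref{eq:chakalov}--\eqref{eq:chakalov-coeffs} to the vector $w_X=(x_{n_1},\dots,x_{n_\alpha})\in\CC^{\alpha}$: it writes $\DD^{\alpha-1}(w_X)$ as a linear combination of functionals $\delta^{(\ell)}(x-\sigma)$, where $\sigma$ runs over the distinct entries of $w_X$ and $\ell$ ranges from $0$ to one less than the multiplicity of $\sigma$ in $w_X$. Since $X\subseteq\{1,\dots,r\}$, the multiset of entries of $w_X$ is a sub-multiset of that of $w$; hence each such $\sigma$ coincides with some $\tau_j\in T(w)$, and its multiplicity in $w_X$ is at most its multiplicity $d_j$ in $w$. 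Therefore every $\delta^{(\ell)}(x-\sigma)$ that occurs is one of the standard-basis distributions \eqref{eq:standard-basis-prony} of $V_w$, and $\Delta_X(w)$ lies in their span, i.e.\ in $V_w$. The one point worth stating explicitly is this monotonicity of multiplicities under passage to a sub-multiset.

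\emph{The Stieltjes identity \eqref{eq:sm-dd}.} By \prettyref{def:stieltjes-mapping}, $\SM(\Delta_X(w))$ is obtained by evaluating the functional $\Delta_X(w)=\DD^{\alpha-1}(w_X)$ on $f_z(x)=(z-x)^{-1}$, which for every $z$ outside the finite set $\{x_{n_1},\dots,x_{n_\alpha}\}$ is smooth near all nodes of $w_X$. By \eqref{eq:dd-of-fraction} this value equals $1/q_{\alpha,w_X}(z)$, which is precisely \eqref{eq:sm-dd}; both sides being rational in $z$, the identity then holds identically. As a by-product $\SM(\Delta_X(w))\in W_w$, since $q_{\alpha,w_X}$ divides $q_{r,w}$.

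\emph{Continuity of the section.} The first coordinate $w\mapsto w$ is trivially continuous, so by the definition of the topology on ${\cal P}_{r}$ (the embedding into $\CC^{r}\times{\cal D}$) it suffices to check that $w\mapsto\Delta_X(w)$ is continuous into the space of distributions, i.e.\ that $w\mapsto\DD^{\alpha-1}(w_X)f$ is continuous for each fixed test function $f$. But the coordinate-selection map $w\mapsto w_X$ is continuous $\CC^{r}\to\CC^{\alpha}$, and by the continuity assertion of \prettyref{prop:dd-properties} the divided difference $\DD^{\alpha-1}(\,\cdot\,)f$ depends continuously on its node vector; the composition is thus continuous. There is no real difficulty here: the content has all been assembled in \prettyref{prop:dd-properties}, and this lemma is essentially its bookkeeping.
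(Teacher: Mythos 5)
Your proof is correct and follows exactly the route the paper intends: the paper gives no explicit proof, stating the lemma as an immediate consequence of Proposition \ref{prop:dd-properties} (Chakalov's expansion for membership in $V_w$, the divided difference of $(z-x)^{-1}$ for the Stieltjes identity, and continuity in the nodes for the section), which is precisely what you have written out. Your explicit remark on the monotonicity of multiplicities under passing to the sub-multiset $w_X$ is the only point the paper leaves entirely tacit, and it is handled correctly.
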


\subsection{\label{sub:dd-basis}Constructing a basis}

The following result is well-known, see e.g. \cite[Proposition 35]{de2005divided}.
\begin{thm}
\label{thm:basis-all}Denote $N_{j}=\left\{ 1,2,\dots,j\right\} $
for $j=1,2,\dots,r$. Then for every $w\in\CC^{r}$, the collection
\[
\left\{ \Delta_{N_{j}}\left(w\right)\right\} _{j=1}^{r}
\]
is a basis for $V_{w}$.
\end{thm}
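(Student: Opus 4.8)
The plan is to show that the $r$ functionals $\Delta_{N_j}(w) = \DD^{j-1}(x_1,\dots,x_j)$, $j=1,\dots,r$, span $V_w$; since $\dim V_w = r$ and we have exactly $r$ of them, spanning is equivalent to linear independence, so it suffices to prove either one. I would prove spanning, and the cleanest route is through the Stieltjes isomorphism $\SM$ of \prettyref{def:stieltjes-mapping}. By \prettyref{lem:dd-as-our-guys}, each $\Delta_{N_j}(w)$ lies in $V_w$ and satisfies
\[
\SM\bigl(\Delta_{N_j}(w)\bigr) = \frac{1}{q_{j,w}(z)} = \frac{1}{\prod_{i=1}^{j}(z-x_i)}.
\]
So it is enough to show that the $r$ rational functions $\left\{\tfrac{1}{q_{j,w}(z)}\right\}_{j=1}^{r}$ form a basis of $W_w$, because $\SM: V_w \to W_w$ is a linear isomorphism.

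First I would observe that each $\tfrac{1}{q_{j,w}(z)}$ does lie in $W_w$: its denominator $q_{j,w}(z) = \prod_{i=1}^{j}(z-x_i)$ divides $q_{r,w}(z) = Q(z)$ (here $q_{r,w}$ is the polynomial whose roots, with multiplicity, are $x_1,\dots,x_r$, matching the pole data $T(w), D(w)$ of $W_w$), and the numerator is $1$, of degree $0 < j = \deg q_{j,w}$. Thus $\tfrac{1}{q_{j,w}} = \tfrac{q_{r,w}/q_{j,w}}{q_{r,w}}$ is a proper rational function with the right pole structure. Next, for linear independence: suppose $\sum_{j=1}^{r} c_j \tfrac{1}{q_{j,w}(z)} = 0$. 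Multiply through by $q_{r,w}(z)$ to get the polynomial identity $\sum_{j=1}^{r} c_j \, p_j(z) = 0$, where $p_j(z) = \prod_{i=j+1}^{r}(z - x_i)$ has degree exactly $r - j$. Since the $p_j$ have pairwise distinct degrees $r-1, r-2, \dots, 0$, they are linearly independent as polynomials (a standard triangularity argument: look at the top-degree term), forcing all $c_j = 0$. Hence the $\tfrac{1}{q_{j,w}}$ are $r$ linearly independent elements of the $r$-dimensional space $W_w$, so they form a basis; pulling back through $\SM^{-1}$ gives that $\{\Delta_{N_j}(w)\}_{j=1}^r$ is a basis of $V_w$.

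An alternative, perhaps even more direct, argument works straight from the Newton form \eqref{eq:newton-expansion}: the map sending a polynomial $p$ of degree $< r$ to the vector of its divided-difference coordinates $\bigl(\DD^{j-1}(x_1,\dots,x_j)p\bigr)_{j=1}^{r}$ is a linear isomorphism from $\Pi_{<r}$ onto $\CC^r$ (that is exactly what the Newton expansion asserts, the $q_{j-1,w}$ being a basis of $\Pi_{<r}$). Dually, this says precisely that the functionals $\DD^{0}(x_1), \DD^1(x_1,x_2), \dots, \DD^{r-1}(x_1,\dots,x_r)$, restricted to $\Pi_{<r}$, form a basis of the dual space $\Pi_{<r}^{*}$; and $V_w$ is naturally identified with $\Pi_{<r}^{*}$ via the Hermite conditions \eqref{eq:hermite-conditions-dd} (a functional in $V_w$ is determined by its action on polynomials of degree $< r$, since the Hermite interpolation problem on the $r$ nodes-with-multiplicity is unisolvent in $\Pi_{<r}$). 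I do not expect a serious obstacle here; the only point requiring a little care is the bookkeeping that $V_w$ is genuinely dual to $\Pi_{<r}$ — i.e. that a distribution of the form \eqref{eq:standard-basis-representation} is zero as an element of $V_w$ iff it annihilates all polynomials of degree $< r$ — which is just unisolvence of Hermite interpolation, or equivalently nonsingularity of the confluent Vandermonde matrix $\cvand(x_1,d_1,\dots,x_s,d_s)$ already used in \prettyref{sec:multiplicity-restricted}. Given that, the result is immediate, and since the paper cites it as well-known I would keep the write-up to the short $\SM$-based version above.
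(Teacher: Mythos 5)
Your argument is correct, but it is not the route the paper takes. The paper obtains \prettyref{thm:basis-all} as the special case of the more general \prettyref{thm:basis-conf} (take the configuration ${\cal C}$ consisting of the single block $N_{r}$, to which every $w\in\CC^{r}$ is subordinated); and \prettyref{thm:basis-conf} is proved entirely on the distribution side, by using the recursion \eqref{eq:dd-recursive-rule} and a backward induction (\prettyref{lem:belongs-to-span}) to show that every standard basis element $\delta^{(\ell)}(x-\tau_{j})$ of $V_{w}$ lies in the span of the divided differences, together with a dimension count. Your proof instead pushes everything through $\SM$ to the Stieltjes side, where independence of $\left\{ 1/q_{j,w}\right\} _{j=1}^{r}$ reduces to the observation that the polynomials $q_{r,w}/q_{j,w}$ have pairwise distinct degrees --- a one-line triangularity argument. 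This is exactly the alternative the authors gesture at in the remark following the proof of \prettyref{thm:basis-conf} (``consider the algebra of elementary fractions in ${\cal S}_{r}$ and use the correspondence \eqref{eq:sm-dd}'') but do not carry out. The trade-off: your version is shorter and self-contained for the full chain $\left\{ \Delta_{N_{j}}\right\}$, while the paper's machinery is built to handle arbitrary configurations ${\cal B}_{\cal C}(w)$ and their stability under small perturbations of $w$, which is what is actually needed later for resolving collision singularities; the degree argument does not transfer as cleanly to those collections, since for a general configuration the images $1/q_{\alpha,w_{X_{j,m}}}$ no longer have denominators forming a single divisibility chain. Your second, duality-based sketch (Newton form as an isomorphism $\Pi_{<r}\to\CC^{r}$, with $V_{w}\cong\Pi_{<r}^{*}$ via unisolvence of Hermite interpolation) is also sound and is closer in spirit to the citation the paper gives to de Boor.
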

There are various proofs of this statement. Below we show how to construct
sets which do not necessarily remain basis for all $w\in\CC^{r}$,
but only for $w$ in a small neighborhood of a given $w_{0}\in\CC^{r}.$
\prettyref{thm:basis-all} will then follow as a special case of this
construction.

Informally, if two coordinates $x_{i}$ and $x_{j}$ can collide,
then it is necessary to allow them to be glued by some element of
the basis, i.e. we will need $\Delta_{X}\left(w\right)$ where $i,j\in X$
(in \prettyref{thm:basis-all} all coordinates might be eventually
glued into a single point because $w$ is unrestricted.) In order
to make this statement formal, let us introduce a notion of \emph{configuration},
which is essentially a partition of the set of indices.
\begin{defn}
A \emph{configuration} ${\cal C}$ is a partition of the set $N_{r}=\left\{ 1,2,\dots,r\right\} $
into $s=s\left({\cal C}\right)$ disjoint nonempty subsets
\[
\sqcup_{i=1}^{s}X_{i}=N_{r},\qquad\left|X_{i}\right|=d_{i}>0.
\]
The multiplicity vector of ${\cal C}$ is
\[
T\left({\cal C}\right)=\left(d_{1},\dots,d_{s}\right).
\]

\end{defn}
Every configuration defines a continuous family of divided differences
as follows.
\begin{defn}
Let a configuration ${\cal C}=\left\{ X_{j}\right\} _{j=1}^{s\left({\cal C}\right)}$
. Enumerate each $X_{j}$ in increasing order of its elements
\[
X_{j}=\left\{ n_{1}^{j}<n_{2}^{j}<\dots n_{d_{j}}^{j}\right\} 
\]
and denote for every $m=1,2,\dots,d_{j}$
\[
X_{j,m}\isdef\left\{ n_{k}^{j}:\; k=1,2,\dots,m\right\} .
\]
For every $w\in\CC^{r}$, the collection ${\cal B_{C}}\left(w\right)\subset V_{w}$
is defined as follows:
\[
{\cal B_{C}}\left(w\right)\isdef\left\{ \Delta_{X_{j,m}}\left(w\right)\right\} _{j=1,\dots,s\left({\cal C}\right)}^{m=1,\dots,d_{j}}.
\]

\end{defn}
Now we formally define when a partition is ``good'' with respect
to a point $w\in\CC^{r}$.
\begin{defn}
The point $w=\left(x_{1},\dots,x_{r}\right)\in\CC^{r}$ is \emph{subordinated}
to the configuration ${\cal C}=\left\{ X_{j}\right\} _{j=1}^{s\left({\cal C}\right)}$
if whenever $x_{k}=x_{\ell}$ for a pair of indices $k\neq\ell$,
then necessarily $k,\ell\in X_{j}$ for some $X_{j}$.
\end{defn}
Now we are ready to formulate the main result of this section.
\begin{thm}
\label{thm:basis-conf}For a given $w_{0}\in\CC^{r}$ and a configuration
${\cal C}$, the collection ${\cal B_{C}}\left(w_{0}\right)$ is a
basis for $V_{w_{0}}$ if and only if $w_{0}$ is subordinated to
${\cal C}$. In this case, ${\cal B_{C}}\left(w\right)$ is a continuous
family of bases for $V_{w}$ in a sufficiently small neighborhood
of $w_{0}$.
\end{thm}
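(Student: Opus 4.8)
The plan is to prove both directions together by analyzing when the $r$ functionals in ${\cal B_{\cal C}}(w_0)$ are linearly independent, and then to upgrade independence at $w_0$ to independence in a neighborhood. Since $\dim V_{w_0}=r$ and $|{\cal B_{\cal C}}(w_0)|=\sum_j d_j=r$, being a basis is equivalent to linear independence. It is cleaner to transport the question to the Stieltjes side: by \prettyref{lem:dd-as-our-guys}, $\SM$ is a linear isomorphism $V_{w_0}\to W_{w_0}$ and sends $\Delta_{X_{j,m}}(w_0)$ to the rational function $1/q_{m,(w_0)_{X_{j,m}}}(z)$, i.e. to $\prod_{k=1}^{m}\bigl(z-x_{n^j_k}\bigr)^{-1}$ evaluated at $w_0$. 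So the statement becomes: the family of elementary-type rational functions $\bigl\{\prod_{k\le m}(z-x^{(0)}_{n^j_k})^{-1}\bigr\}_{j,m}$ is a basis of $W_{w_0}$ iff $w_0$ is subordinated to ${\cal C}$.

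For the "only if" direction I would argue contrapositively: if $w_0$ is not subordinated, there are indices $k\ne\ell$ with $x^{(0)}_k=x^{(0)}_\ell$ lying in two different blocks $X_a\ne X_b$. Then $W_{w_0}$ has dimension $r$ but the poles of $\SM({\cal B_{\cal C}}(w_0))$ are confined to the set $T(w_0)$ of distinct values, whose multiplicities (as forced by the Chakalov expansion, \prettyref{enu:dd-as-functionals} of \prettyref{prop:dd-properties}) are dictated by the block structure — each chain $\{X_{j,m}\}_m$ only ever "sees" the multiplicities internal to its own block, so the span cannot reach the full pole multiplicity of the repeated value $x^{(0)}_k$ that is split across $X_a$ and $X_b$. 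Concretely, one counts: the span of ${\cal B_{\cal C}}(w_0)$ lies in the space of rational functions whose pole at a distinct value $\tau$ has order at most $\max_j \#\{i\in X_j: x^{(0)}_i=\tau\}$, and summing these maxima over distinct values gives something strictly less than $r$ when some value is split. Hence ${\cal B_{\cal C}}(w_0)$ cannot span.

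For the "if" direction, assume $w_0$ is subordinated to ${\cal C}$. Within each block $X_j$, the coordinates $x^{(0)}_{n^j_1},\dots,x^{(0)}_{n^j_{d_j}}$ (with multiplicities) exhaust exactly the multiplicity of each distinct value they hit, and across blocks the distinct values involved are disjoint (this is precisely subordination: a coincidence $x^{(0)}_k=x^{(0)}_\ell$ forces $k,\ell$ into the same block). So it suffices to show that for a single block — i.e. for a fixed sequence $y_1,\dots,y_m$ (possibly with repeats) — the functions $\prod_{k\le i}(z-y_k)^{-1}$, $i=1,\dots,m$, are linearly independent in the space of rational functions with denominator $\prod_{k\le m}(z-y_k)$; this is immediate because the $i$-th function has denominator of degree exactly $i$, so a nontrivial relation $\sum_i c_i \prod_{k\le i}(z-y_k)^{-1}=0$ cleared to $\sum_i c_i \prod_{k>i}(z-y_k)=0$ would force $c_m=0$ by looking at the top-degree term, then $c_{m-1}=0$, and so on (equivalently, this is just \prettyref{thm:basis-all} applied within the block, via \prettyref{def:dd-as-elements-of-vw}). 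Taking the direct sum over blocks, the full family is independent, hence a basis. Pulling back through $\SM^{-1}$, ${\cal B_{\cal C}}(w_0)$ is a basis of $V_{w_0}$.

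Finally, for the neighborhood statement: subordination is an open condition on $w$ — if $w_0$ is subordinated to ${\cal C}$, so is every $w$ close enough to $w_0$, since coincidences can only disappear (or stay within the allowed blocks) under small perturbation, never appear between coordinates that were distinct at $w_0$. By \prettyref{lem:dd-as-our-guys} each $(w,\Delta_{X_{j,m}}(w))$ is a continuous section of ${\cal P}_r$, so the map $w\mapsto {\cal B_{\cal C}}(w)$ varies continuously; the "Gram-type" determinant (say, of the coefficient matrix of ${\cal B_{\cal C}}(w)$ in the standard basis of $V_w$, or equivalently of $\SM({\cal B_{\cal C}}(w))$ against a fixed frame) is continuous and nonzero at $w_0$, hence nonzero nearby, giving a continuous family of bases. \prettyref{thm:basis-all} is the special case ${\cal C}=\{N_r\}$ (a single block), where every $w\in\CC^r$ is vacuously subordinated. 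The main obstacle I anticipate is making the "only if" counting argument airtight — precisely pinning down, via the Chakalov/partial-fraction description, the maximal pole multiplicity attainable in $\spann {\cal B_{\cal C}}(w_0)$ at a value that is split across blocks, and checking the dimension count is strict; the "if" direction and the openness/continuity part are routine once the block decomposition is set up.
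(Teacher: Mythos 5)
Your proof is correct, but it takes the alternative route that the paper itself only gestures at in the remark following its proof: you push everything through $\SM$ into the Stieltjes space using \eqref{eq:sm-dd} and argue with degrees and pole orders of the rational functions $1/q(z)$, whereas the paper works entirely on the distribution side. There, the ``if'' direction shows each standard basis element $\delta^{(\ell)}(x-\tau_{j})$ lies in $\spann{\cal B_{C}}(w_{0})$ via \prettyref{lem:belongs-to-span} (whose proof rests on the recursion \eqref{eq:dd-recursive-rule} and Popoviciu's refinement lemma \eqref{eq:refinement}) combined with \eqref{eq:dd-same-nodes-derivative}, and the ``only if'' direction again uses that lemma to place $\delta(x-\tau)$ in $\spann S_{\alpha}\cap\spann S_{\beta}$ before counting dimensions. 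Your substitutes are more elementary: within a block, independence follows from the distinct degrees of the denominators; across blocks, subordination makes the pole sets disjoint, so $W_{w_{0}}$ splits as a direct sum of the per-block subspaces (by uniqueness of the partial-fraction decomposition --- worth stating explicitly, since block-wise independence alone does not yield global independence without it); and in the non-subordinated case your pole-order bound ($\max_{j}$ versus $\sum_{j}$ of the per-block multiplicities at a split value) caps the dimension of the span strictly below $r$, which is arguably cleaner than the paper's intersection argument. One small indexing slip: after clearing denominators, the top-degree coefficient of $\sum_{i}c_{i}\prod_{k>i}(z-y_{k})$ is $c_{1}$ (degree $m-1$), not $c_{m}$, so the elimination runs from $c_{1}$ down to $c_{m}$; the conclusion is unaffected since the degrees $m-i$ are pairwise distinct. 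The openness and continuity argument at the end coincides with the paper's.
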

Let us first make a technical computation.
\begin{lem}
\label{lem:belongs-to-span}For a configuration ${\cal C}$ and a
point $w\in\CC^{r}$, consider for every fixed $j=1,\dots,s\left({\cal C}\right)$
the set 
\begin{equation}
S_{j}\isdef\left\{ \Delta_{X_{j,m}}\left(w\right)\right\} _{m=1}^{d_{j}}.\label{eq:subbasis}
\end{equation}

\begin{enumerate}
\item Define for any pair of indices $1\leqslant k\leqslant\ell\leqslant d_{j}$
the index set
\[
X_{j,k:\ell}\isdef\left\{ n_{k}^{j}<n_{k+1}^{j}<\dots<n_{\ell}^{j}\right\} \subseteq X_{j}=X_{j,1:d_{j}}=X_{j,d_{j}}.
\]
Then 
\[
\Delta_{X_{j,k:\ell}}\left(w\right)\in\spann S_{j}.
\]

\item For an arbitrary subset $Y\subseteq X_{j}$ (and not necessarily containing
segments of consecutive indices), we also have
\[
\Delta_{Y}\left(w\right)\in\spann S_{j}.
\]

\end{enumerate}
\end{lem}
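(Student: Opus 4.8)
The plan is to prove part (1) first and then deduce part (2) from it, using the Popoviciu refinement lemma (item \ref{enu:dd-ordering}, more precisely \eqref{eq:refinement} in \prettyref{prop:dd-properties}) as the key engine. For part (1), fix $j$ and write $X_j = \{n_1^j < \dots < n_{d_j}^j\}$. The set $S_j$ consists of the divided differences $\Delta_{X_{j,m}}(w) = \DD^{m-1}(x_{n_1^j},\dots,x_{n_m^j})$ for $m=1,\dots,d_j$, i.e. the divided differences on the \emph{prefixes} of the sequence $(x_{n_1^j},\dots,x_{n_{d_j}^j})$. I want to show that the divided difference on an arbitrary \emph{contiguous block} $X_{j,k:\ell}$ of that sequence lies in their span. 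The natural approach is induction on the block length $\ell-k+1$. The base case $k=\ell$ is the single-node divided difference $\DD^0(x_{n_k^j}) = \delta(x - x_{n_k^j})$; I would handle this by a downward induction / telescoping using the recursive rule \eqref{eq:dd-recursive-rule}, or more cleanly by invoking Popoviciu \eqref{eq:refinement} directly: applied to the subsequence $\sigma$ picking out the indices $n_k^j,\dots,n_\ell^j$ inside the full list $(x_{n_1^j},\dots,x_{n_{d_j}^j})$, it expresses $\DD^{\ell-k}(x_{n_k^j},\dots,x_{n_\ell^j})$ as a linear combination of divided differences $\DD^{\ell-k}$ on \emph{consecutive} windows of length $\ell-k+1$ within the sequence. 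That is progress but not yet the prefixes; so I would combine this with an induction that reduces each length-$m$ window $\DD^{m-1}(x_{n_{i+1}^j},\dots,x_{n_{i+m}^j})$ to prefix divided differences by repeatedly applying the two-term recursion \eqref{eq:dd-recursive-rule} in the form $\DD^{m-1}(x_{n_{i+1}^j},\dots,x_{n_{i+m}^j})$ expressed via $\DD^{m-2}$ on $(x_{n_{i+1}^j},\dots,x_{n_{i+m-1}^j})$ and $(x_{n_{i+2}^j},\dots,x_{n_{i+m}^j})$ — though the case $x_{n_{i+1}^j}=x_{n_{i+m}^j}$ needs the derivative branch and should be treated carefully.

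A cleaner route to part (1), which I would try first, avoids the case split entirely and works on the Stieltjes side via \prettyref{lem:dd-as-our-guys}: under the isomorphism $\SM$, $\Delta_{X_{j,m}}(w)$ maps to $1/q_{m,w_{X_{j,m}}}(z) = 1/\prod_{i=1}^{m}(z - x_{n_i^j})$, and $\Delta_{X_{j,k:\ell}}(w)$ maps to $1/\prod_{i=k}^{\ell}(z - x_{n_i^j})$. So the claim $\Delta_{X_{j,k:\ell}}(w)\in\spann S_j$ is equivalent, after applying $\SM$, to the purely algebraic statement that
\[
\frac{1}{\prod_{i=k}^{\ell}(z-x_{n_i^j})} \in \spann\Bigl\{\frac{1}{\prod_{i=1}^{m}(z-x_{n_i^j})} : m=1,\dots,d_j\Bigr\}.
\]
This is immediate: $\spann\{1/\prod_{i=1}^m(z-\cdots)\}_{m=1}^{d_j}$ is exactly the space of rational functions of the form $P(z)/\prod_{i=1}^{d_j}(z-x_{n_i^j})$ with $\deg P < d_j$ whose partial-fraction structure respects the pole multiplicities determined by the multiset $\{x_{n_1^j},\dots,x_{n_{d_j}^j}\}$ — equivalently, it equals $W_{w_{X_j}}$ in the notation of \prettyref{def:bundles}. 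Since $1/\prod_{i=k}^{\ell}(z-x_{n_i^j})$ clearly has denominator dividing $\prod_{i=1}^{d_j}(z-x_{n_i^j})$ and numerator of degree $\ell - k < d_j$, and its poles are a sub-multiset of $\{x_{n_1^j},\dots,x_{n_{d_j}^j}\}$, it lies in $W_{w_{X_j}} = \SM(\spann S_j)$. (Here I use that $\SM$ is a linear isomorphism, so $\spann \SM(S_j) = \SM(\spann S_j)$, and that $S_j$ spans $W_{w_{X_j}}$ — its image has dimension $d_j$ by \prettyref{thm:basis-all} applied to the vector $w_{X_j}\in\CC^{d_j}$, hence it is all of $W_{w_{X_j}}$.)

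Part (2) then follows from part (1) by a short induction. Given an arbitrary $Y = \{n_{i_1}^j < \dots < n_{i_p}^j\}\subseteq X_j$ with $p = |Y|$, apply $\SM$: $\Delta_Y(w)$ maps to $1/\prod_{t=1}^{p}(z - x_{n_{i_t}^j})$, which again has poles forming a sub-multiset of $\{x_{n_1^j},\dots,x_{n_{d_j}^j}\}$ and numerator degree $p-1 < d_j$, so it lies in $W_{w_{X_j}} = \SM(\spann S_j)$; applying $\SM^{-1}$ gives $\Delta_Y(w)\in\spann S_j$. Thus part (2) does not even need part (1) as a separate step on the distribution side — both are instances of the same containment in $W_{w_{X_j}}$ — but I would keep the statement organized as (1) then (2) to match how \prettyref{thm:basis-conf} consumes it. The main obstacle, if one insists on a proof staying entirely in the distribution space ${\cal P}_r$ (say for continuity/robustness reasons), is the bookkeeping in the recursive rule \eqref{eq:dd-recursive-rule} when colliding nodes force the derivative branch; passing to the Stieltjes picture via \prettyref{lem:dd-as-our-guys} sidesteps this completely and is the route I would commit to.
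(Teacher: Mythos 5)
Your proof is correct, but it takes a genuinely different route from the paper's. The paper stays entirely in the distribution space: writing $y_i=x_{n_i^j}$ and $[k:\ell]=\Delta_{X_{j,k:\ell}}(w)$, part (1) is proved by a backward induction on the block length using the uniform two-term recursion $(y_\ell-y_k)[k:\ell]=[k+1:\ell]-[k:\ell-1]$ — the case split in \eqref{eq:dd-recursive-rule} that worries you disappears because this product form is valid also for coincident nodes — and part (2) is then deduced from part (1) via Popoviciu's refinement lemma \eqref{eq:refinement} together with the symmetry of $\DD$ in its arguments. Your committed route — push everything through $\SM$ and observe that the images $1/\prod_{i=k}^{\ell}\left(z-x_{n_i^j}\right)$, resp. $1/\prod_{t}\left(z-x_{n_{i_t}^j}\right)$, all lie in the $d_j$-dimensional space $W_{w_{X_j}}$ spanned by $\SM(S_j)$ — is precisely the alternative the authors gesture at in the remark following \prettyref{thm:basis-conf} but do not carry out; it is cleaner, treats coincident nodes and both parts of the lemma at one stroke, and needs neither the recursion nor the refinement lemma. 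The one point you should repair is the appeal to \prettyref{thm:basis-all} to see that $\SM(S_j)$ spans $W_{w_{X_j}}$: within the paper's own development \prettyref{thm:basis-all} is meant to be a corollary of \prettyref{thm:basis-conf}, which consumes the present lemma, so quoting it here is circular unless one leans entirely on the external reference. This is easily avoided: writing $\SM\left(\Delta_{X_{j,m}}(w)\right)=P_m(z)/\prod_{i=1}^{d_j}\left(z-x_{n_i^j}\right)$ with $P_m(z)=\prod_{i=m+1}^{d_j}\left(z-x_{n_i^j}\right)$, the numerators $P_1,\dots,P_{d_j}$ have the distinct degrees $d_j-1,\dots,0$ and are therefore linearly independent, so $\SM(S_j)$ is a linearly independent set of $d_j$ elements of $W_{w_{X_j}}$ and spans it by a dimension count. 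With that substitution your argument is complete and self-contained.
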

\begin{proof}
For clarity, we denote $y_{i}=x_{n_{i}^{j}}$ and $\left[k:\ell\right]=\Delta_{X_{j,k:\ell}}\left(w\right)$.
By \eqref{eq:dd-recursive-rule} we have in all cases (including repeated
nodes)
\begin{equation}
\left(y_{\ell}-y_{k}\right)\left[k:\ell\right]=\left[k+1:\ell\right]-\left[k:\ell-1\right].\label{eq:recursive-2}
\end{equation}

The proof of the first statement is by backward induction on $n=\ell-k$.
We start from $n=d_{j}$, and obviously $\left[1:d_{j}\right]\in S_{j}$.
In addition, by definition of $S_{j}$ we have $\left[1:m\right]\in S_{j}$
for all $m=1,\dots,d_{j}$. Therefore, in order to obtain all $\left[k:\ell\right]$
with $\ell-k=n-1$, we apply \eqref{eq:recursive-2} several times
as follows. 

\begin{eqnarray*}
\left[2:n\right] & = & \left(y_{n}-y_{1}\right)\left[1:n\right]+\left[1:n-1\right]\\
\left[3:n+1\right] & = & \left(y_{n+1}-y_{2}\right)\underleftrightarrow{\left[2:n+1\right]}+\underline{\left[2:n\right]}\\
 & \dots\\
\left[d_{j}-n+2:d_{j}\right] & = & \left(y_{d_{j}}-y_{d_{j}-n+1}\right)\underleftrightarrow{\left[d_{j}-n+1:d_{j}\right]}+\underline{\left[d_{j}-n+1:d_{j}-1\right]}
\end{eqnarray*}
Here the symbol $\underline{\cdots}$ under a term means that the
term is taken directly from the previous line, while $\underleftrightarrow{\cdots}$
indicates that the induction hypothesis is used. In the end, the left-hand
side terms are shown to belong to $\spann S_{j}$.

In order to prove the second statement, we employ the first statement,
\eqref{eq:refinement} and \prettyref{prop:dd-properties}, \prettyref{enu:dd-ordering}.
\end{proof}
\begin{minipage}[t]{1\columnwidth}%
\end{minipage}
\begin{proof}[Proof of \prettyref{thm:basis-conf}]
In one direction, assume that $w_{0}=\left(x_{1},\dots,x_{r}\right)$
is subordinated to ${\cal C}$. It is sufficient to show that every
element of the standard basis \eqref{eq:standard-basis-prony} belongs
to $\spann\left\{ {\cal B_{C}}\left(w_{0}\right)\right\} $.

Let $\tau_{j}\in T\left(w_{0}\right)$, let $d_{j}$ be the corresponding
multiplicity, and let $Y_{j}\subseteq N_{r}$ denote the index set
of size $d_{j}$ 
\[
Y_{j}\isdef\left\{ i:\quad x_{i}=\tau_{j}\right\} .
\]
 By the definition of subordination, there exists an element in the
partition of ${\cal C}$, say $X_{k}$, for which $Y_{j}\subseteq X_{k}$.
By \prettyref{lem:belongs-to-span} we conclude that for all subsets
$Z\subseteq Y_{j}$,
\[
\Delta_{Z}\left(w_{0}\right)\in\spann\left\{ \Delta_{X_{k,m}}\left(w_{0}\right)\right\} _{m=1}^{\left|X_{k}\right|}\subseteq\spann\left\{ {\cal B_{C}}\left(w_{0}\right)\right\} .
\]
By \eqref{eq:dd-same-nodes-derivative}, $\Delta_{Z}\left(w_{0}\right)$
is nothing else but
\[
\Delta_{Z}\left(w_{0}\right)=\DD^{\left|Z\right|-1}\left(\underbrace{\tau_{j},\dots,\tau_{j}}_{\times\left|Z\right|}\right)=\frac{1}{\left(\left|Z\right|-1\right)!}\delta^{\left(\left|Z\right|-1\right)}\left(x-\tau_{j}\right).
\]
This completes the proof of the necessity. In the other direction,
assume by contradiction that $x_{k}=x_{\ell}=\tau$ but nevertheless
there exist two distinct elements of the partition ${\cal C}$, say
$X_{\alpha}$ and $X_{\beta}$ such that $k\in X_{\alpha}$ and $\ell\in X_{\beta}.$
Let the sets $\left\{ S_{j}\right\} _{j=1}^{s\left({\cal C}\right)}$
be defined by \prettyref{eq:subbasis}. Again, by \prettyref{lem:belongs-to-span}
and \eqref{eq:dd-same-nodes-derivative} we conclude that
\[
\delta\left(x-\tau\right)\in\spann S_{\alpha}\cap\spann S_{\beta}.
\]
But notice that ${\cal B_{C}}\left(w_{0}\right)=\bigcup_{j=1}^{s\left({\cal C}\right)}S_{j}$
and $\sum_{j=1}^{s}\left|S_{j}\right|=d$, therefore by counting dimensions
we conclude that
\[
\dim\spann\left\{ {\cal B_{C}}\left(w_{0}\right)\right\} <d,
\]
in contradiction to the assumption that ${\cal B_{C}}\left(w_{0}\right)$
is a basis.

Finally, one can evidently choose a sufficiently small neighborhood
$U\subset\CC^{r}$ of $w_{0}$ such that for all $w\in U$, no new
collisions are introduced, i.e. $w$ is still subordinated to ${\cal C}$.
The continuity argument (\prettyref{lem:dd-as-our-guys}) finishes
the proof.\end{proof}
\begin{rem}
Another possible method of proof is to consider the algebra of elementary
fractions in the Stieltjes space ${\cal S}_{r}$, and use the correspondence
\eqref{eq:sm-dd}.
\end{rem}
As we mentioned, \prettyref{thm:basis-all} follows as a corollary
of \prettyref{thm:basis-conf} for the configuration ${\cal C}$ consisting
of a single partition set $N_{r}$.

\subsection{Resolution of collision singularities}

Let $\mu_{0}^{*}\in\Sigma_{r}^{*}\subset\CC^{2r}$ be given, and let
$\left(w_{0},g_{0}\right)\in{\cal P}_{r}$ be a solution to the (rank-restricted)
Prony problem. The point $w_{0}$ is uniquely defined up to a permutation
of the coordinates, so we just fix a particular permutation. Let $T\left(w_{0}\right)=\left(\tau_{1},\dots,\tau_{s}\right)$.

Our goal is to solve the rank-restricted Prony problem for every input
$\mu^{*}\in\CC^{2r}$ in a small neighborhood of $\mu_{0}^{*}$. According
to \prettyref{thm:rank-restriction-is-good}, this amounts to a continuous
representation of the solution $R_{\mu^{*}}\left(z\right)=\frac{P_{\mu^{*}}\left(z\right)}{Q_{\mu^{*}}\left(z\right)}=\TM_{r}^{-1}\left(\mu^{*}\right)$
to the corresponding diagonal Padé approximation problem as an element
of the bundle ${\cal P}_{r}$.

Define $\delta=\min_{i\neq j}\left|\tau_{i}-\tau_{j}\right|$ to be
the ``separation distance'' between the clusters. Since the roots
of $Q_{\mu^{*}}$ depend continuously on $\mu^{*}$ and the degree
of $Q_{\mu^{*}}$ does not drop, we can choose some $\mu_{1}^{*}$
sufficiently close to $\mu_{0}^{*}$, for which
\begin{enumerate}
\item all the roots of $Q_{\mu_{1}^{*}}\left(z\right)$ are distinct, and 
\item these roots can be grouped into $s$ clusters, such that each of the
elements of the $j$-th cluster is at most $\delta/3$ away from $\tau_{j}$.
\end{enumerate}
Enumerate the roots of $Q_{\mu_{1}^{*}}$ within each cluster in an
arbitrary manner. This choice enables us to define locally (in a neighborhood
of $\mu_{1}^{*}$) $r$ algebraic functions $x_{1}\left(\mu^{*}\right),\dots,x_{r}\left(\mu^{*}\right)$,
satisfying
\[
Q_{\mu^{*}}\left(z\right)=\prod_{j=1}^{s}\left(z-x_{j}\left(\mu^{*}\right)\right).
\]
Then we extend these functions by analytic continuation according
to the above formula into the entire neighborhood of $\mu_{0}^{*}$.
Consequently,
\[
w\left(\mu^{*}\right)\isdef\left(x_{1}\left(\mu^{*}\right),\dots,x_{r}\left(\mu^{*}\right)\right)
\]
is a continuous (multivalued) algebraic function in a neighborhood
of $\mu_{0}^{*}$, satisfying
\[
w\left(\mu_{0}^{*}\right)=w_{0}.
\]

After this ``pre-processing'' step, we can solve the rank-restricted
Prony problem in this neighborhood of $\mu_{0}^{*}$, as follows.

\begin{algorithm}[H]
\noindent \begin{raggedright}
Let $\mu_{0}^{*}\in\Sigma_{r}^{*}\subset\CC^{2r}$ be given, and let
$\left(w_{0},g_{0}\right)\in{\cal P}_{r}$ be a solution to the (rank-restricted)
Prony problem. Let $w_{0}$ be subordinated to some configuration
${\cal C}$.
\par\end{raggedright}

\noindent \begin{raggedright}
The input to the problem is a measurement vector $\mu^{*}=\left(m_{0},\dots,m_{2r-1}\right)\in\CC^{2r}$,
which is in a small neighborhood of $ $$\mu_{0}^{*}$.
\par\end{raggedright}
\begin{enumerate}
\item \noindent \begin{raggedright}
Construct the function $w=w\left(\mu^{*}\right)$ as described above.
\par\end{raggedright}
\item \noindent \begin{raggedright}
Build the basis ${\cal B_{C}}\left(w\right)=\left\{ \Delta_{X_{j,\ell}}\left(w\right)\right\} _{j=1,\dots,s\left({\cal C}\right)}^{\ell=1,\dots,d_{j}}$
for $V_{w}$.
\par\end{raggedright}
\item \noindent \begin{raggedright}
Find the coefficients $\left\{ \beta_{j,\ell}\right\} _{j=1,\dots,s\left({\cal C}\right)}^{\ell=1,\dots,d_{j}}$
such that 
\[
\SM\left(\sum_{j,\ell}\beta_{j,\ell}\Delta_{X_{j,\ell}}\left(w\right)\right)=R\left(z\right),
\]
by solving the linear system
\begin{equation}
\underbrace{\sum_{j,\ell}\beta_{j,\ell}\left(w\right)\Delta_{X_{j,\ell}}\left(w\right)}_{=g\left(w\right)}\left(x^{k}\right)=m_{k}\left(=\int x^{k}g\left(w\right)\left(x\right)\dd x\right),\qquad k=0,1,\dots,2r-1.\label{eq:linear-system-dd}
\end{equation}

\par\end{raggedright}
\end{enumerate}
\label{alg:prony-dd}\caption{Solving rank-restricted Prony problem with collisions.}
\end{algorithm}

\begin{thm}
\label{thm:f.d.prony.1}The coordinates $\left\{ \beta_{j,\ell}\right\} $
of the solution to the rank-restricted Prony problem, given by \prettyref{alg:prony-dd},
are (multivalued) algebraic functions, continuous in a neighborhood
of the point $\mu_{0}^{*}$ .\end{thm}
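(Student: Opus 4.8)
The plan is to trace, through the three steps of \prettyref{alg:prony-dd}, the algebraic and continuous dependencies established in the previous sections. Fix once and for all a branch of the multivalued algebraic function $w = w\left(\mu^{*}\right)$ produced in the pre-processing step, normalized by $w\left(\mu_{0}^{*}\right) = w_{0}$; by that construction its coordinates $x_{j}\left(\mu^{*}\right)$ are branches of algebraic functions of $\mu^{*}$, continuous near $\mu_{0}^{*}$, since they are roots of $Q_{\mu^{*}}$ whose coefficients are rational in $\mu^{*}$ (the Pad\'e solution method). Because $w_{0}$ is subordinated to ${\cal C}$ and ``being subordinated to ${\cal C}$'' is an open condition on $\CC^{r}$, after shrinking the neighborhood we may assume that $w\left(\mu^{*}\right)$ is subordinated to ${\cal C}$ for all $\mu^{*}$ in it; hence by \prettyref{thm:basis-conf} the collection ${\cal B_{C}}\left(w\left(\mu^{*}\right)\right)$ is a basis of $V_{w\left(\mu^{*}\right)}$, so Step~2 of the algorithm makes sense.

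First I would show that the overdetermined system \eqref{eq:linear-system-dd} --- $2r$ equations in the $r$ unknowns $\beta_{j,\ell}$ --- is consistent and has a unique solution. \emph{Consistency:} by \prettyref{thm:rank-restriction-is-good} the rational function $R_{\mu^{*}} = \TM_{r}^{-1}\left(\mu^{*}\right)$ exists, depends continuously on $\mu^{*}$, and has denominator of full degree $r$ with no common factor with its numerator; hence $R_{\mu^{*}}\in W_{w\left(\mu^{*}\right)}$, its denominator being exactly $\prod_{j}\left(z - x_{j}\left(\mu^{*}\right)\right)$. Applying the fibrewise linear isomorphism $\SM^{-1}:W_{w}\to V_{w}$ of \prettyref{def:stieltjes-mapping} produces $g\left(\mu^{*}\right)\isdef\SM^{-1}\left(R_{\mu^{*}}\right)\in V_{w\left(\mu^{*}\right)}$, and the factorization $\rrpm = \TM_{r}\circ\SM$ gives $m_{k}\left(g\left(\mu^{*}\right)\right) = m_{k}$ for $k = 0,\dots,2r-1$; thus the coordinates of $g\left(\mu^{*}\right)$ in ${\cal B_{C}}\left(w\left(\mu^{*}\right)\right)$ solve \eqref{eq:linear-system-dd}. \emph{Uniqueness:} the fibrewise map $\rrpm\restriction_{V_{w}} = \TM_{r}\circ\SM\restriction_{V_{w}}$ is injective (a composition of two injective linear maps on the fibre), so the images $\rrpm\left(\Delta_{X_{j,\ell}}\left(w\right)\right)$ of the basis elements are $r$ linearly independent vectors of $\CC^{2r}$; these are precisely the columns of the $2r\times r$ coefficient matrix $M\left(w\right)$ of \eqref{eq:linear-system-dd}, so $M\left(w\right)$ has full column rank $r$, which forces uniqueness. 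In particular the $\left\{\beta_{j,\ell}\left(\mu^{*}\right)\right\}$ output by \prettyref{alg:prony-dd} are exactly the coordinates of $g\left(\mu^{*}\right)$.

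It remains to read off algebraicity and continuity. The entries of $M\left(w\right)$ are the numbers $m_{k}\left(\Delta_{X_{j,\ell}}\left(w\right)\right) = \DD^{\left|X_{j,\ell}\right|-1}\left(w_{X_{j,\ell}}\right)\left(x^{k}\right)$, which by the recursive rule \eqref{eq:dd-recursive-rule} are polynomials in the coordinates of $w$ (they vanish for $k < \left|X_{j,\ell}\right| - 1$ and are complete homogeneous symmetric functions of the corresponding nodes otherwise). Hence $w\mapsto M\left(w\right)$ is polynomial, and $\mu^{*}\mapsto M\left(w\left(\mu^{*}\right)\right)$ is algebraic and continuous near $\mu_{0}^{*}$. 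Since $M\left(w_{0}\right)$ has rank $r$, some fixed choice of $r$ rows gives an $r\times r$ submatrix $\tilde{M}$ invertible at $w_{0}$, hence on a neighborhood; writing $\tilde{\mu}^{*}$ for the corresponding $r$ components of $\mu^{*}$, the unique solution is
\[
\beta\left(\mu^{*}\right) = \tilde{M}\left(w\left(\mu^{*}\right)\right)^{-1}\tilde{\mu}^{*},
\]
a composition of the algebraic map $\mu^{*}\mapsto w\left(\mu^{*}\right)$, the rational map $w\mapsto\tilde{M}\left(w\right)^{-1}$, and a linear map in $\mu^{*}$. Therefore each $\beta_{j,\ell}$ is a (multivalued, with branching inherited from the root ordering in $w$) algebraic function of $\mu^{*}$, continuous in a neighborhood of $\mu_{0}^{*}$, as claimed.

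I expect the only real obstacle to be the consistency-plus-uniqueness step for \eqref{eq:linear-system-dd}: one must combine the \emph{existence} of the Pad\'e solution (\prettyref{thm:rank-restriction-is-good}) with ${\cal B_{C}}\left(w\right)$ being a genuine \emph{basis} (\prettyref{thm:basis-conf}) and with the injectivity of $\SM$ and of $\TM_{r}$ on fibres --- the latter being what yields the crucial full column rank of $M\left(w\right)$. The remaining ingredients, namely the algebraicity of $w$ in $\mu^{*}$, the continuity of the divided-difference sections $w\mapsto\Delta_{X}\left(w\right)$, and the polynomial form of $M\left(w\right)$, are already available from the pre-processing construction, \prettyref{lem:dd-as-our-guys}, \prettyref{prop:dd-properties} and \eqref{eq:dd-recursive-rule}, and amount to routine bookkeeping.
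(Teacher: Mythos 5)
Your proof is correct and follows essentially the same route as the paper's: express the solution in the basis ${\cal B_{C}}\left(w\left(\mu^{*}\right)\right)$, observe that the entries $\nu_{j,\ell,k}\left(w\right)=\Delta_{X_{j,\ell}}\left(w\right)\left(x^{k}\right)$ of the linear system \eqref{eq:linear-system-dd} are symmetric polynomials in the coordinates of the algebraic function $w\left(\mu^{*}\right)$, and conclude by non-degeneracy of that system near $\mu_{0}^{*}$. The only difference is that you spell out what the paper compresses into ``remains non-degenerate'': the consistency of the overdetermined $2r\times r$ system via $\SM^{-1}\circ\TM_{r}^{-1}$ and its full column rank via fibrewise injectivity of $\TM_{r}\circ\SM$ --- a worthwhile but not essentially different elaboration.
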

\begin{proof}
Since the divided differences $\Delta_{j,\ell}\left(w\right)$ are
continuous in $w$, then clearly for each $k=0,1,\dots,2r-1$ the
functions
\[
\nu_{j,\ell,k}\left(w\right)=\Delta_{j,\ell}\left(w\right)\left(x^{k}\right)=\DD^{\ell-1}\left(w_{X_{j,\ell}}\right)\left(x^{k}\right)
\]
are continuous%
\footnote{In fact, $\nu_{j,\ell,k}\left(w\right)$ are symmetric polynomials
in some of the coordinates of $w$.%
} in $w$, and hence continuous, as multivalued functions, in a neighborhood
of $\mu_{0}^{*}$. Since ${\cal B_{C}}\left(w\left(\mu^{*}\right)\right)$
remains a basis in a (possibly smaller) neighborhood of $\mu_{0}^{*}$,
the system \eqref{eq:linear-system-dd}, taking the form
\[
\sum_{j,\ell}\nu_{j,\ell,k}\left(w\right)\beta_{j,\ell}\left(w\right)=m_{k},\qquad k=0,1,\dots,2r-1,
\]
remains non-degenerate in this neighborhood. We conclude that the
coefficients $\left\{ \beta_{j,\ell}\left(w\left(\mu^{*}\right)\right)\right\} $
are multivalued algebraic functions, continuous in a neighborhood
of $\mu_{0}^{*}$.
\end{proof}

\section{\label{sec:real-prony}Real Prony space and hyperbolic polynomials}

In this section we shall restrict ourselves to the real case. Notice
that in many applications only real Prony systems are used. On the
other hand, considering the Prony problem over the real numbers significantly
simplifies some constructions. In particular, we can easily avoid
topological problems, related with the choice of the ordering of the
points $x_{1},\dots,x_{d}\in\CC.$ So in a definition of the real
Prony space $R{\cal P}_{d}$ we assume that the coordinates $x_{1},\ldots,x_{d}$
are taken with their natural ordering $x_{1}\leq x_{2}\leq\dots\leq x_{d}$.
Accordingly, the real Prony space $R{\cal P}_{d}$ is defined as the
bundle $(w,g),\ w\in\prod_{d}\subset{\mathbb{R}}^{d},g\in RV_{w}.$
Here $\prod_{d}$ is the prism in ${\mathbb{R}}^{d}$ defined by the
inequalities $x_{1}\leq x_{2}\leq\dots\leq x_{d}$, and $RV_{w}$
is the space of linear combinations with real coefficients of $\delta$-functions
and their derivatives with the support $\{x_{1},\ldots,x_{d}\},$
as in \prettyref{def:bundles}. The Prony, Stieltjes and Taylor maps
are the restrictions to the real case of the complex maps defined
above.

In this paper we just point out a remarkable connection of the real
Prony space and mapping with hyperbolic polynomials, and Vieta and
Vandermonde mappings studied in Singularity Theory (see \cite{arnold1986hpa,kostov1989geometric,kostov2006root,kostov2007root}
and references therein).

Hyperbolic polynomials (in one variable) are real polynomials $Q(z)=z^{d}+\sum_{j=1}^{d}\lambda_{j}z^{d-j},$
with all $d$ of their roots real. We denote by $\Gamma_{d}$ the
space of the coefficients $\Lambda=(\lambda_{1},\ldots,\lambda_{d})\subset{\mathbb{R}}^{d}$
of all the hyperbolic polynomials, and by $\hat{\Gamma}_{d}$ the
set of $\Lambda\in\Gamma_{d}$ with $\lambda_{1}=0,\ |\lambda_{2}|\leq1.$
Recalling \eqref{eq:sm-rank-equals-degree-denominator}, it is evident
that all hyperbolic polynomials appear as the denominators of the
irreducible fractions in the image of $R{\cal P}_{d}$ by $\SM$.
This shows, in particular, that the geometry of the boundary $\partial\Gamma$
of the hyperbolicity domain $\Gamma$ is important in the study of
the real Prony map $\PM$: it is mapped by $\PM$ to the boundary
of the solvability domain of the real Prony problem. This geometry
has been studied in a number of publications, from the middle of 1980s.
In \cite{kostov1989geometric} V. P. Kostov has shown that $\hat{\Gamma}$
possesses the Whitney property: there is a constant $C$ such that
any two points $\lambda_{1},\lambda_{2}\in\hat{\Gamma}$ can be connected
by a curve inside $\hat{\Gamma}$ of the length at most $C\Vert\lambda_{2}-\lambda_{1}\Vert$.
``Vieta mapping'' which associates to the nodes $x_{1}\leq x_{2}\leq\dots\leq x_{d}$
the coefficients of $Q(z)$ having these nodes as the roots, is also
studied in \cite{kostov1989geometric}. In our notations, Vieta mapping
is the composition of the Stieltjes mapping $\SM$ with the projection
to the coefficients of the denominator.

In \cite{arnold1986hpa} V.I.Arnold introduced and studied the notion
of maximal hyperbolic polynomial, relevant in description of $\hat{\Gamma}$.
Furthermore, the Vandermonde mapping ${\cal V}:{\mathbb{R}}^{d}\rightarrow{\mathbb{R}}^{d}$
was defined there by
\begin{eqnarray*}
\begin{cases}
y_{1}=a_{1}x_{1}+\ldots+a_{d}x_{d},\\
\dots\\
y_{d}=a_{1}x_{1}^{d}+\ldots+a_{d}x_{d}^{d},
\end{cases}
\end{eqnarray*}
with $a_{1},\ldots,a_{d}$ fixed. In our notations ${\cal V}$ is
the restriction of the Prony mapping to the pairs $(w,g)\in R{\cal P}_{d}$
with the coefficients of $g$ in the standard basis of $RV_{w}$ fixed.
It was shown in \cite{arnold1986hpa} that for $a_{1},\ldots,a_{d}>0$
${\cal V}$ is a one-to-one mapping of $\prod_{d}$ to its image.
In other words, the first $d$ moments uniquely define the nodes $x_{1}\leq x_{2}\leq\dots\leq x_{d}$.
For $a_{1},\ldots,a_{d}$ with varying signs, this is no longer true
in general. This result is applied in \cite{arnold1986hpa} to the
study of the colliding configurations.

Next, the ``Vandermonde varieties'' are studied in \cite{arnold1986hpa},
which are defined by the equations
\begin{eqnarray*}
\begin{cases}
a_{1}x_{1}+\ldots+a_{d}x_{d} & =\alpha_{1},\\
 & \dots\\
a_{1}x_{1}^{\ell}+\ldots+a_{d}x_{d}^{\ell} & =\alpha_{\ell}.
\end{cases} &  & \ell\leqslant d.
\end{eqnarray*}
It is shown that for $a_{1},\ldots,a_{d}>0$ the intersections of
such varieties with $\prod_{d}$ are either contractible or empty.
Finally, the critical points of the next Vandermonde equation on the
Vandermond variety are studied in detail, and on this base a new proof
of Kostov's theorem is given.

We believe that the results of \cite{arnold1986hpa,kostov1989geometric}
and their continuation in \cite{kostov2006root,kostov2007root} and
other publications are important for the study of the Prony problem
over the reals, and we plan to present some results in this direction
separately.

\appendix

\section{\label{app:proof-pade-solvability}Proof of \prettyref{thm:pade-solvability}}

Recall that we are interested in finding conditions for which the
Taylor mapping $\TM:\;{\cal S}_{d}\to{\cal T}_{d}$ is invertible.
In other words, given
\[
S\left(z\right)=\sum_{k=0}^{2d-1}m_{k}\left(\frac{1}{z}\right)^{k+1},
\]
we are looking for a rational function $R\left(z\right)\in{\cal S}_{d}$
such that
\begin{equation}
S\left(z\right)-R\left(z\right)=\frac{d_{1}}{z^{2d+1}}+\frac{d_{2}}{z^{2d+2}}+\dots.\label{eq:Pade1}
\end{equation}

Write $R\left(z\right)=\frac{P\left(z\right)}{Q\left(z\right)}$ with
$Q\left(z\right)=\sum_{j=0}^{d}c_{j}z^{j}$ and $P\left(z\right)=\sum_{i=0}^{d-1}b_{i}z^{i}$.
Multiplying \eqref{eq:Pade1} by $Q\left(z\right)$, we obtain
\begin{equation}
Q\left(z\right)S\left(z\right)-P\left(z\right)=\frac{e_{1}}{z^{d+1}}+\frac{e_{2}}{z^{d+2}}+\dots.\label{eq:Pade2}
\end{equation}

\begin{prop}
The identity \eqref{eq:Pade2}, considered as an equation on $P$
and $Q$ with $\deg P<\deg Q\leq d$, always has a solution.\end{prop}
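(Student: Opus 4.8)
The statement asserts that the identity $Q(z)S(z)-P(z) = e_1 z^{-(d+1)} + e_2 z^{-(d+2)} + \cdots$ can always be solved for $P,Q$ with $\deg P < \deg Q \leq d$. I would treat this as a homogeneous linear-algebra problem: the content is that among the conditions imposed by matching Taylor coefficients at infinity, the number of free parameters strictly exceeds the number of equations, so a nontrivial solution exists. Concretely, write $Q(z)=\sum_{j=0}^{d}c_j z^j$ and $P(z)=\sum_{i=0}^{d-1}b_i z^i$, giving $2d+1$ unknowns $(c_0,\dots,c_d,b_0,\dots,b_{d-1})$.

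**Key steps.** First I would expand $Q(z)S(z)$ as a Laurent series at infinity: since $S(z)=\sum_{k=0}^{2d-1} m_k z^{-(k+1)}$, the product $Q(z)S(z)$ has a polynomial part (powers $z^{d-1}$ down to $z^0$) plus a tail in negative powers of $z$. Equation \eqref{eq:Pade2} demands two things: (i) the polynomial part of $Q(z)S(z)$ equals $P(z)$, which simply \emph{defines} $P$ in terms of $Q$ and imposes no constraint — it consumes the $d$ unknowns $b_0,\dots,b_{d-1}$ but always has a solution; and (ii) the coefficients of $z^{-1}, z^{-2},\dots, z^{-d}$ in $Q(z)S(z)$ all vanish, which is $d$ homogeneous linear equations in the $d+1$ unknowns $c_0,\dots,c_d$. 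A homogeneous system of $d$ equations in $d+1$ unknowns always has a nonzero solution, so pick any such $Q\not\equiv 0$, then set $P$ to be the polynomial part of $Q(z)S(z)$. One should check $\deg P<\deg Q\leq d$: $\deg Q\leq d$ is automatic, and $\deg P\leq d-1<\deg Q$ holds because the polynomial part of $Q(z)S(z)$ has degree at most $d-1$ (as $S$ starts at $z^{-1}$); if $Q$ happens to have lower degree the same bound still gives $\deg P<\deg Q$ unless $Q$ is constant, but a constant nonzero $Q$ forces all the vanishing conditions $m_0=\cdots=m_{d-1}=0$, in which case $P=0$ and $Q=1$ still works with $\deg P = -\infty < \deg Q$. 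I would phrase the degree bookkeeping carefully to cover these edge cases, or simply note $\deg P < \deg Q$ is read as "$P$ has no term of degree $\geq \deg Q$," which the construction guarantees.

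**Main obstacle.** The only genuine subtlety is the convention on $\deg P < \deg Q$ when the constructed $Q$ has degree less than $d$ or $P$ is identically zero; everything else is a counting argument. I expect the cleanest writeup is: the vanishing of the coefficients of $z^{-1},\dots,z^{-d}$ in $Q(z)S(z)$ is exactly the linear system $\widetilde{M}_d \cdot (c_0,\dots,c_d)^{\mathsf T} = 0$ in the notation of \prettyref{def:hankel-and-minors} (the $d\times(d+1)$ Hankel matrix), which, being $d$ equations in $d+1$ unknowns, has a nontrivial kernel; choose $(c_0,\dots,c_d)$ in that kernel, define $P$ as the polynomial part of $Q(z)S(z)$, and the identity \eqref{eq:Pade2} holds by construction. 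This also sets up cleanly for the next step of the appendix, where the rank of $\widetilde{M}_d$ controls which solutions are genuine (degree-$d$) Padé approximants.
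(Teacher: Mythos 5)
Your proposal is correct and follows essentially the same route as the paper: the conditions split into the inhomogeneous system defining $P$ from $Q$ (the paper's \eqref{eq:star}) and the homogeneous Hankel system $\M_{d}\vec c=0$ (the paper's \eqref{eq:double-star}), which has a nontrivial solution since it is $d$ equations in $d+1$ unknowns. One caution on the degree bookkeeping: when $\deg Q=\ell<d$ the bound $\deg P\leq d-1$ alone does \emph{not} give $\deg P<\deg Q$; the correct justification is the one you give at the end, namely that the polynomial part of $Q(z)S(z)$ has degree at most $\deg Q-1$ (equivalently, as in the paper, $c_{j}=0$ for $j\geq\ell+1$ forces $b_{j}=0$ for $j\geq\ell$ by the triangular structure of \eqref{eq:star}).
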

\begin{proof}
Substituting the expressions for $S,\ P$ and $Q$ into \eqref{eq:Pade2}
we get
\begin{equation}
\left(c_{0}+c_{1}z+\dots+c_{d}z^{d}\right)\left(\frac{m_{0}}{z}+\frac{m_{1}}{z^{2}}+\dots\right)-b_{0}-\dots-b_{{d-1}}z^{{d-1}}=\frac{e_{1}}{z^{{d+1}}}+\dots.\label{eq:comp.coef}
\end{equation}
The highest degree of $z$ in the left hand side of \eqref{eq:comp.coef}
is $d-1$. So equating to zero the coefficients of $z^{s}$ in \eqref{eq:comp.coef}
for $s=d-1,\dots,-d$ we get the following systems of equations:
\begin{equation}
\begin{bmatrix}0 & 0 & 0 & m_{0}\\
0 & 0 & m_{0} & m_{1}\\
\adots & \adots\\
m_{0} & m_{1} & \dots & m_{d-1}
\end{bmatrix}\begin{bmatrix}c_{1}\\
c_{2}\\
\vdots\\
c_{d}
\end{bmatrix}=\begin{bmatrix}b_{d-1}\\
b_{d-2}\\
\vdots\\
b_{0}
\end{bmatrix}.\tag{\ref{app:proof-pade-solvability}.\ensuremath{\star}}\label{eq:star}
\end{equation}
From this point on, the equations become homogeneous:
\begin{equation}
\begin{bmatrix}m_{0} & m_{1} & \dots & m_{d}\\
m_{1} & m_{2} & \dots & m_{d+1}\\
\adots & \adots\\
m_{d-1} & m_{d} & \dots & m_{2d-1}
\end{bmatrix}\begin{bmatrix}c_{0}\\
c_{1}\\
\vdots\\
c_{d}
\end{bmatrix}=\begin{bmatrix}0\\
0\\
\vdots\\
0
\end{bmatrix}.\tag{\ref{app:proof-pade-solvability}.\ensuremath{\star\star}}\label{eq:double-star}
\end{equation}
The homogeneous system \eqref{eq:double-star} has the Hankel-type
$d\times\left(d+1\right)$ matrix $\M_{d}=\left(m_{i+j}\right)$ with
$0\leqslant i\leqslant d-1$ and $0\leqslant j\leqslant d$. This
system has $d$ equations and $d+1$ unknowns $c_{0},\dots,c_{d}$.
Consequently, it always has a nonzero solution $c_{0},\dots,c_{d}$.
Now substituting these coefficients $c_{0},\dots,c_{d}$ of $Q$ into
the equations \eqref{eq:star} we find the coefficients $b_{0},\dots,b_{d-1}$
of the polynomial $P$, satisfying \eqref{eq:star}. Notice that if
$c_{j}=0$ for $j\geqslant\ell+1$ then it follows from the structure
of the equations \eqref{eq:star} that $b_{j}=0$ for $j\geq\ell$.
Hence these $P,Q$ provide a solution of \eqref{eq:Pade2}, satisfying
$\deg P<\deg Q\leq d,$ and hence belonging to ${\cal S}_{d}.$
\end{proof}
However, in general \eqref{eq:Pade2} does not imply \eqref{eq:Pade1}.
This implication holds only if $\deg Q=d$. The following proposition
describes a possible ``lost of accuracy'' as we return from \eqref{eq:Pade2}
to \eqref{eq:Pade1} and $\deg Q<d$:
\begin{prop}
\label{prop:rank.pade}Let \eqref{eq:Pade2} be satisfied with the
highest nonzero coefficient of $Q$ being $c_{\ell},\ \ell\le d$.
Then
\begin{equation}
S(z)-\frac{{P(z)}}{{Q(z)}}=\frac{d_{1}}{z^{{d+\ell+1}}}+\frac{d_{2}}{z^{{d+\ell+2}}}+\dots.\label{eq:Pade3}
\end{equation}
\end{prop}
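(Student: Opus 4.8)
The plan is to obtain \eqref{eq:Pade3} from \eqref{eq:Pade2} by the single operation of dividing through by $Q(z)$, while bookkeeping the order of vanishing at $z=\infty$. The point is that, under the hypothesis that $c_\ell$ is the highest nonvanishing coefficient of $Q$, the polynomial $Q$ has degree exactly $\ell$, so $1/Q(z)$ is analytic at infinity with a zero of order exactly $\ell$ there; multiplying it against the right-hand side of \eqref{eq:Pade2}, which already vanishes to order $d+1$ at infinity, produces a function vanishing to order $d+\ell+1$.

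Concretely, the first step is to record the expansion of $1/Q$ at infinity. Writing $Q(z)=z^\ell\widehat Q(1/z)$ with $\widehat Q(u)=c_\ell+c_{\ell-1}u+\dots+c_0u^\ell$ and $\widehat Q(0)=c_\ell\neq0$, the reciprocal $1/\widehat Q$ is analytic near $u=0$, so for $|z|$ large
\[
\frac{1}{Q(z)}=\frac{1}{z^\ell}\sum_{k\geq0}\frac{\beta_k}{z^{k}},\qquad\beta_0=\frac{1}{c_\ell}.
\]
The second step is to rewrite the right-hand side of \eqref{eq:Pade2} as $z^{-(d+1)}\sum_{j\geq0}e_{j+1}z^{-j}$, i.e.\ $z^{-(d+1)}$ times a power series in $1/z$. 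Dividing \eqref{eq:Pade2} by $Q(z)$ and inserting both expansions gives
\[
S(z)-\frac{P(z)}{Q(z)}=\frac{1}{z^{d+\ell+1}}\left(\sum_{k\geq0}\frac{\beta_k}{z^{k}}\right)\left(\sum_{j\geq0}\frac{e_{j+1}}{z^{j}}\right).
\]
The third step is merely to observe that the product of the two convergent power series in $1/z$ on the right is again a power series in $1/z$; relabelling its coefficients as $d_1,d_2,\dots$ yields exactly \eqref{eq:Pade3}, with leading coefficient $d_1=\beta_0e_1=e_1/c_\ell$.

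I do not expect a genuine obstacle here; the argument is essentially formal. The only place where care is needed — and the only place the hypothesis is used — is the legitimacy of the expansion of $1/Q(z)$ at infinity, which holds precisely because $c_\ell\neq0$. It is also worth noting explicitly that the statement asserts only that the order of vanishing at infinity is \emph{at least} $d+\ell+1$: the leading coefficient $d_1$ equals $e_1/c_\ell$ and may itself vanish, so nothing sharper follows without further information on $e_1$.
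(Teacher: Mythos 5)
Your argument is correct and is essentially identical to the paper's own proof: both expand $1/Q(z)=z^{-\ell}\left(f_0+f_1z^{-1}+\dots\right)$ using $c_\ell\neq0$ and multiply \eqref{eq:Pade2} through by $1/Q$. Your closing remark that the vanishing order is only ``at least'' $d+\ell+1$ (since $d_1=e_1/c_\ell$ may vanish) is a correct reading of the statement.
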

\begin{proof}
We notice that if the leading nonzero coefficient of $Q$ is $c_{\ell}$
then we have
\[
\frac{1}{Q}=\frac{1}{{z^{\ell}}}(\frac{1}{{c_{\ell}+\frac{c_{\ell-1}}{z}+\dots}})=\frac{1}{{z^{\ell}}}(f_{0}+f_{1}\frac{1}{z}+\dots).
\]
So multiplying \eqref{eq:Pade2} by $\frac{1}{Q}$ we get \eqref{eq:Pade3}.
\end{proof}
\begin{minipage}[t]{1\columnwidth}%
\end{minipage}
\begin{proof}[Proof of \prettyref{thm:pade-solvability}]
Assume that the rank of $\tilde{M}_{d}$ is $r\leq d,$ and that
$|M_{r}|\ne0.$ Let us find a polynomial $Q(z)$ of degree $r$ of
the form $Q(z)=z^{r}+\sum_{j=0}^{r-1}c_{j}z^{j},$ whose coefficients
satisfy system \eqref{eq:double-star}. Put $\vec c_{r}=(c_{0},\dots,c_{r-1},1)^{T}$
and consider a linear system $\tilde{M}_{r}\vec c_{r}=0$. Since by
assumptions $|M_{r}|\ne0,$ this system has a unique solution. Extend
this solution by zeroes, i.e. put $\vec c_{d}=(c_{0},\dots,c_{r-1},1,0,\dots,0)^{T}.$
We want $\vec c_{d}$ to satisfy \eqref{eq:double-star}, which is
$\tilde{M}_{d}\vec c_{d}=0$. This fact is immediate for the first
$r$ rows of $\tilde{M}_{d}$. But since the rank of $\tilde{M}_{d}$
is $r$ by the assumption, its other rows are linear combinations
of the first $r$ ones. Hence $\vec c_{d}$ satisfies \eqref{eq:double-star}.

Now the equations \eqref{eq:star} produce a polynomial $P(z)$ of
degree at most $r-1$. So we get a rational function $R(z)=\frac{{P(z)}}{{Q(z)}}\in{\cal S}_{r}\subseteq{\cal S}_{d}$
which solves the Padé problem \eqref{eq:Pade2}, with $\deg Q(z)=r$.
Write $R(z)=\sum_{k=0}^{\infty}\alpha_{k}(\frac{1}{z})^{k+1}$. By
\prettyref{prop:rank.pade} we have $m_{k}=\alpha_{k}$ till $k=d+r-1$.

Now, the Taylor coefficients $\alpha_{k}$ of $R(z)$ satisfy a linear
recurrence relation
\begin{equation}
m_{k}=-\sum_{{s=1}}^{r}c_{s}m_{{k-s}},\qquad k=r,r+1,\dots.\label{eq:lin.rec}
\end{equation}
Considering the rows of the system $\tilde{M}_{d}\vec c_{d}=0$ we
see that $m_{k}$ satisfy the same recurrence relation \eqref{eq:lin.rec}
till $k=d+r-1$ (we already know that $m_{k}=\alpha_{k}$ till $k=d+r-1$).
We shall show that in fact $m_{k}$ satisfy \eqref{eq:lin.rec} till
$k=2d-1.$

Consider a $d\times r$ matrix $\bar{M}_{d}$ formed by the first
$r$ columns of $M_{d}$, and denote its row vectors by $\vec v_{i}=(m_{i,0},\dots,m_{i,r-1}),\ i=1,\dots,d-1$.
The vectors $\vec v_{i}$ satisfy
\begin{equation}
\vec v_{i}=-\sum_{{s=1}}^{r}c_{s}\vec v_{{i-s}},\quad i=r,\dots,d-1,\label{eq:row.lr}
\end{equation}
since their coordinates satisfy \eqref{eq:lin.rec} till $k=d+r-1$.
Now $\vec v_{0},\dots,\vec v_{r-1}$ are linearly independent, and
hence each $\vec v_{i},\ i=r,\dots,d-1,$ can be expressed as
\begin{equation}
\vec v_{i}=\sum_{{s=0}}^{{r-1}}\gamma_{{i,s}}\vec v_{s}.\label{eq:rows.md}
\end{equation}
Denote by $\vec{\tilde{v}}_{i}=(m_{i,0},\dots,m_{i,d}),\ i=1,\dots,d-1$
the row vectors of $\tilde{M}_{d}$. Since by assumptions the rank
of $\tilde{M}_{d}$ is $r$, the vectors $\vec{\tilde{v}}_{i}$ can
be expressed through the first $r$ of them exactly in the same form
as $\vec v_{i}$:
\begin{equation}
\vec{\tilde{v}}_{i}=\sum_{{s=0}}^{{r-1}}\gamma_{{i,s}}\vec{\tilde{v}}_{s},\quad i=r,\dots,d-1.\label{eq:rows.md1}
\end{equation}
Now the property of a system of vectors to satisfy the linear recurrence
relation \eqref{eq:row.lr} depends only on the coefficients $\gamma_{i,s}$
in their representation \eqref{eq:rows.md} or \eqref{eq:rows.md1}.
Hence from \eqref{eq:row.lr} we conclude that the full rows $\vec{\tilde{v}}_{i}$
of $\tilde{M}_{d}$ satisfy the same recurrence relation. Coordinate-wise
this implies that $m_{k}$ satisfy \eqref{eq:lin.rec} till $k=2d-1,$
and hence $m_{k}=\alpha_{k}$ till $k=2d-1.$ So $R(z)$ solves the
original \prettyref{prob:pade}.

In the opposite direction, assume that $R(z)$ solves \prettyref{prob:pade},
and that the representation $R(z)=\frac{{P(z)}}{{Q(z)}}\in{\cal S}_{r}\subset{\cal S}_{d}$
is irreducible, i.e. $\deg Q=r$. Write $Q(z)=z^{r}+\sum_{j=0}^{r-1}c_{j}z^{j}$.
Then $m_{k}$, being the Taylor coefficients of $R(z)$ till $k=2d-1$,
satisfy a linear recurrence relation \eqref{eq:lin.rec}: $m_{k}=-\sum_{s=1}^{r}c_{s}m_{k-s},\ k=r,r+1,\dots,2d-1.$
Applying this relation coordinate-wise to the rows of $\tilde{M}_{d}$
we conclude that all the rows can be linearly expressed through the
first $r$ ones. So the rank of $\tilde{M}_{d}$ is at most $r$.

It remains to show that the left upper minor $|M_{r}|$ is non-zero,
and hence the rank of $\tilde{M}_{d}$ is exactly $r$.

By \prettyref{prop:pade-correspondence}, if the decomposition of
$R\left(z\right)$ in the standard basis is
\[
R\left(z\right)=\sum_{j=1}^{s}\sum_{\ell=1}^{d_{j}}a_{j,\ell-1}\frac{\left(-1\right)^{\ell-1}\left(\ell-1\right)!}{\left(z-x_{j}\right)^{\ell}},
\]
where $\sum_{j=1}^{s}d_{j}=r$ and $\left\{ x_{j}\right\} $ are pairwise
distinct, then the Taylor coefficients of $R\left(z\right)$ are given
by \eqref{eq:confluent equation_prony_system}. Clearly, we must have
$a_{j,d_{j}-1}\neq0$ for all $j=1,\dots,s$, otherwise $\deg Q<r$,
a contradiction. Now consider the following well-known representation
of $M_{r}$ as a product of three matrices (see e.g. \cite{batenkov2011accuracy}):
\begin{equation}
M_{r}=V\left(x_{1},d_{1},\dots,x_{s},d_{s}\right)\times\diag\left\{ A_{j}\right\} _{j=1}^{s}\times V\left(x_{1},d_{1},\dots,x_{s},d_{s}\right)^{T},\label{eq:mr-decomposition-product}
\end{equation}
where $V\left(\dots\right)$ is the confluent Vandermonde matrix \eqref{eq:confluent-vandermonde-def}
and each $A_{j}$ is the following $d_{j}\times d_{j}$ block:
\[
A_{j}\isdef\begin{bmatrix}a_{j,0} & a_{j,1} & \cdots & \cdots & a_{j,d_{j}-1}\\
a_{j,1} &  &  & {d_{j}-1 \choose d_{j}-2}a_{j,d_{j}-1} & 0\\
\cdots &  &  & \cdots & 0\\
 & {d_{j}-1 \choose 2}a_{j,d_{j}-1} & 0 & \cdots & 0\\
a_{j,d_{j}-1} & 0 & \cdots & \cdots & 0
\end{bmatrix}.
\]
The formula \eqref{eq:mr-decomposition-product} can be checked by
direct computation. Since $\left\{ x_{j}\right\} $ are pairwise distinct
and $a_{j,d_{j}-1}\neq0$ for all $j=1,\dots,s$, we immediately conclude
that $\left|M_{r}\right|\neq0$.

This finishes the proof of \prettyref{thm:pade-solvability}.
\end{proof}
\bibliographystyle{plain}
\bibliography{../../../bibliography/all-bib}

\end{document}